\newtheorem{prop}{Proposition}[section]
\newtheorem{thm}[prop]{Theorem}
\newtheorem{lem}[prop]{Lemma}
\newtheorem{cor}[prop]{Corollary}
\newtheorem{defn}[prop]{Definition}
\theoremstyle{definition}
\newtheorem*{ack}{Acknowledgments}
\theoremstyle{remark}
\newtheorem{rem}[prop]{Remark}
\numberwithin{equation}{section}
\begin{document}

\title[Volume preserving Gauss curvature flow]{Volume preserving Gauss curvature flow of convex hypersurfaces in the hyperbolic space}

\author[Y. Wei]{Yong Wei}%${}^*$}
\address{School of Mathematical Sciences, University of Science and Technology of China, Hefei 230026, P.R. China}
\email{\href{mailto:yongwei@ustc.edu.cn}{yongwei@ustc.edu.cn}}
\author[B. Yang]{Bo Yang}
\address{Institute of Mathematics, Academy of Mathematics and Systems Sciences, Chinese Academy of Sciences,
Beijing, 100190, P. R. China}
\email{\href{mailto:boyang16@amss.ac.cn}{boyang16@amss.ac.cn}}
\author[T. Zhou]{Tailong Zhou}%${}^*$}
\address{School of Mathematical Sciences, University of Science and Technology of China, Hefei 230026, P.R. China}
\email{\href{mailto:ztl20@ustc.edu.cn}{ztl20@ustc.edu.cn}}
%\date{\today}
\subjclass[2010]{53C44; 53C42}
\keywords{Volume preserving Gauss Curvature flow, Hyperbolic space, Convex, Curvature measures, Alexandrov reflection.}
%\thanks{}

\begin{abstract}
We consider the volume preserving flow of smooth, closed and convex hypersurfaces in the hyperbolic space $\mathbb{H}^{n+1} (n\geq 2)$ with the speed given by arbitrary positive power $\alpha$ of the Gauss curvature. We prove that if the initial hypersurface is convex, then the smooth solution of the flow remains convex and exists for all positive time $t\in [0,\infty)$. Moreover, we apply a result of Kohlmann which characterises the geodesic ball using the hyperbolic curvature measures and an argument of Alexandrov reflection to prove that the flow converges to a geodesic sphere exponentially in the smooth topology. This can be viewed as the first result for non-local type volume preserving curvature flows for hypersurfaces in the hyperbolic space with only convexity required on the initial data.
\end{abstract}

\maketitle

\tableofcontents

%-------------------------------------------------------------------------

\section{Introduction}
Let $X_0:M^n\to\mathbb{H}^{n+1} (n\geq 2)$ be a smooth embedding such that $M_0=X_0(M)$ is a closed and convex hypersurface in the hyperbolic space $\mathbb{H}^{n+1}$. We consider the smooth family of embeddings $X:M^n\times [0,T)\to \mathbb{H}^{n+1}$ satisfying
\begin{equation}\label{flow-VMCF}
	\left\{\begin{aligned}
		\frac{\partial}{\partial t}X(x,t)=&~(\phi(t)-K^{\alpha})\nu(x,t),\\
		X(\cdot,0)=&~X_0(\cdot),
	\end{aligned}\right.
\end{equation}
where $\alpha>0$, $\nu$ is the unit outer normal of $M_t=X(M,t)$, $K$ is the Gauss curvature of $M_t$ and
\begin{equation}\label{eqphi}
	\phi(t)=\frac{1}{|M_t|}\int_{M_t}K^{\alpha}\,\mathrm{d}\mu_t
\end{equation}
such that the domain $\Omega_t$ enclosed by $M_t$ has a fixed volume $|\Omega_t|=|\Omega_0|$ along the flow \eqref{flow-VMCF}.

\begin{defn}
Let $M^n$ be a smooth closed hypersurface in the hyperbolic space $\mathbb{H}^{n+1}$. Denote the principal curvatures of $M$ by $\kappa=(\kappa_1,\cdots,\kappa_n)$.

(i). $M$ is said to be horospherically convex (or simply called $h$-convex),  if its principal curvatures satisfy $\kappa_i\geq 1$ for all $i=1,\cdots,n$ everywhere on $M$. Equivalently, $M$ is h-convex if for any point $p\in M$ there exists a horosphere enclosing $M$ and touching $M$ at $p$. We also say that a bounded domain $\Omega\subset \mathbb{H}^{n+1}$ is $h$-convex if its boundary $\partial\Omega$ is $h$-convex.

(ii). $M$ is said to be positively curved (or called having positive sectional curvatures), if its principal curvatures satisfy
      \begin{equation*}
       \kappa_i\kappa_j>1,\qquad \forall~i\neq j
      \end{equation*}
       everywhere on $M$.

(iii). $M$ is said to be convex if its principal curvatures satisfy $\kappa_i>0$ for all $i=1,\cdots,n$  everywhere on $M$. We also say that a bounded domain $\Omega\subset \mathbb{H}^{n+1}$ is convex if its boundary $\partial\Omega$ is convex.
\end{defn}
 As the main result of this paper, we prove the following convergence result for the flow \eqref{flow-VMCF} with convex initial hypersurfaces:
\begin{thm}\label{theo}
Let $X_0:M^n\to \mathbb{H}^{n+1} (n\geq 2)$ be a smooth embedding such that $M_0=X_0(M)$ is a closed and convex hypersurface in $\mathbb{H}^{n+1}$. Then for any $\alpha>0$, the volume preserving flow \eqref{flow-VMCF} has a unique smooth convex solution $M_t$ for all time $t\in[0,\infty)$, and the solution $M_t$ converges smoothly and exponentially as $t\to \infty$ to a geodesic sphere of radius $\rho_{\infty}$ which encloses the same volume as $M_0$.
\end{thm}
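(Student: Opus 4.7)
The flow \eqref{flow-VMCF} is a nonlocal fully nonlinear parabolic equation on the positive cone: $F=K^{\alpha}$ is smooth, symmetric and strictly monotone there, and the global term $\phi(t)$ depends continuously on $M_t$. A unique smooth convex solution therefore exists on a maximal interval $[0,T^{\ast})$, and the plan is to establish uniform a priori bounds forcing $T^{\ast}=\infty$, then analyze the long-time limit. The first serious point is preservation of convexity. Since only convexity (not $h$-convexity) is assumed on $M_0$, the familiar shift trick of propagating $h^i_j-\delta^i_j\ge 0$ is unavailable. I would apply Hamilton's tensor maximum principle directly to the Weingarten tensor $h^i_j$: its evolution decomposes into an $F^{kl}\nabla_k\nabla_l$ diffusion, zero-order reaction terms coming from the negative sectional curvature of $\mathbb{H}^{n+1}$, and a contribution from the nonlocal term $\phi(t)$. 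At a first zero of $\kappa_{\min}$ one checks, using the sign of the ambient curvature correction together with the strict positivity of $\phi(t)$, that the reaction term strictly forbids $\kappa_{\min}$ from decreasing through $0$, yielding $\kappa_{\min}\ge c(t)>0$ on any interval of smooth existence. This step, obtaining strict convexity without the $h$-convex crutch, is the principal technical obstacle of the theorem.

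\textbf{A priori estimates and long-time existence.}
With convexity preserved, $C^0$ bounds come from monotonicity: the enclosed volume is preserved by construction, while the area $|M_t|$ is monotone non-increasing along the flow (the first-variation identity for $|M_t|$ combined with a H\"older-type inequality applied to the speed $\phi-K^{\alpha}$). Together with the hyperbolic isoperimetric inequality this yields uniform bounds on the inner and outer radii of $\Omega_t$, so $M_t$ lies in a fixed compact annular region of $\mathbb{H}^{n+1}$. An upper bound on the speed $K^{\alpha}$ follows from a Tso-type auxiliary function such as $W=K^{\alpha}/(u-c)$, with $u$ the hyperbolic support function based at a point in the uniform inball; a lower bound on $K^{\alpha}$ comes from a dual auxiliary function controlling $\log K^{\alpha}$ from below. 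Combined with the established strict convexity and with the upper bound on the principal curvatures (consequent on the upper bound on $K^{\alpha}$ together with convexity), the flow is uniformly parabolic with bounded coefficients; Krylov--Safonov yields $C^{2,\beta}$ estimates and Schauder bootstraps to uniform $C^{k}$ bounds for every $k$, so $T^{\ast}=\infty$.

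\textbf{Convergence to a geodesic sphere.}
Uniform smooth bounds together with Arzel\`a--Ascoli produce, along any sequence $t_i\to\infty$, subsequential smooth limits $M_\infty$. The monotonicity of $|M_t|$ combined with volume preservation forces every such limit to be a critical point of the constrained area functional, so that $K^{\alpha}$ equals its average on $M_\infty$. Since the speed is $K^{\alpha}$ and not the mean curvature, Alexandrov's soap-bubble theorem does not apply directly, and extra geometric input is needed. Following the strategy announced in the abstract, I would run an Alexandrov moving-plane argument in $\mathbb{H}^{n+1}$: for every totally geodesic hyperplane through an appropriate fixed center, the asymmetry of $M_t$ with respect to reflection across that hyperplane is shown to decay along the flow, so that in the limit $M_\infty$ is symmetric under a sufficiently rich family of hyperbolic reflections. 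Kohlmann's characterization of geodesic balls by their hyperbolic curvature measures then upgrades this symmetry to full spherical symmetry and identifies $M_\infty$ with the unique geodesic sphere $S_{\rho_\infty}$ of the prescribed volume. This identification being independent of the subsequence gives full convergence; exponential convergence in every $C^{k}$ follows by linearizing \eqref{flow-VMCF} at $S_{\rho_\infty}$, as the linearized operator on $S^{n}$ is self-adjoint, its kernel is annihilated by the volume constraint, and the remaining spectrum is strictly positive.
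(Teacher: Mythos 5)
Your first step—running Hamilton's tensor maximum principle directly on the Weingarten tensor—is exactly the step that does not go through here, and it is the reason the paper develops a different mechanism. At a null eigenvector of $h^i_j$ the zero-order reaction indeed looks favorable (when $\kappa_{\min}=0$ one has $K=0$, so the term $(K^{\alpha}-\phi)(\kappa_{\min}^2-1)=\phi>0$), but the tensor maximum principle for a fully nonlinear speed also requires control of the gradient terms $\ddot F(\nabla h,\nabla h)$ with $F=K^{\alpha}$, and these have no usable sign: $K^{\alpha}$ is concave in $h_{ij}$ only for $\alpha\le 1/n$, the second derivative contains $\alpha(\alpha-1)K^{\alpha-2}\dot K\otimes\dot K$ which is singular or degenerate as $K\to 0$, and the coefficients $\dot F^{kl}=\alpha K^{\alpha-1}\dot K^{kl}$ themselves degenerate precisely where $\kappa_{\min}\to 0$; the nonlocal term $\phi(t)$ further obstructs verifying the null-eigenvector condition pointwise. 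The paper's actual route is to project via the Klein model, pass to the support function and the radii matrix $\mathfrak{r}_{ij}$, and apply a scalar maximum principle to $\log\mathfrak{r}_{11}+\tfrac L2 r^2$ on time intervals of length $\tau$ with a moving base point (the inball center), which yields only a time-dependent lower bound $\kappa_i\ge\Lambda_3^{-1}\Lambda_1^{-2t/\tau}$. Your sketch offers nothing that replaces this, so the central "convexity without $h$-convexity" step is a genuine gap.

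The second gap is in the asymptotic analysis, and it propagates from the first. Your $C^0$ step asserts that $|M_t|$ is monotone, but its first variation is $\int_{M_t} H(\phi-K^{\alpha})\,d\mu_t$, and $H$ and $K^{\alpha}$ are not similarly ordered, so no H\"older/Chebyshev argument applies; the quantity that is monotone is $\mathcal{A}_{n-1}(\Omega_t)$, whose dissipation $n\int(K-\bar K)(K^{\alpha}-\bar K^{\alpha})\,d\mu_t\ge 0$ is also what later forces $K$ to become constant in the limit—an area dissipation of the form $\int H(\phi-K^{\alpha})$ would not give that rigidity even if it had a sign. More seriously, you extract smooth subsequential limits from "uniform smooth bounds", but before the limit is identified the only curvature bounds available are the time-dependent ones from the convexity step (and Tso's estimate, whose constants inherit that time dependence), so uniform-in-time regularity is simply not available at that stage. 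This is why the paper passes to Hausdorff limits via the Blaschke selection theorem, uses the weak continuity of hyperbolic curvature measures together with $\int_{M_{t_i}}|K-\bar K|\,d\mu_{t_i}\to 0$ to show the possibly non-smooth limit body satisfies $\Phi_0=c\,\Phi_n$, and only then invokes Kohlmann's theorem—which is a curvature-measure characterization of geodesic balls for non-smooth convex bodies, not a device for upgrading reflection symmetry as in your sketch. The Alexandrov reflection argument is then used to prove that the inball centers $p_t$ converge to the limit center for all $t\to\infty$; only after that can one rerun the projection argument from the fixed point to get uniform two-sided curvature bounds, smooth subconvergence, and finally exponential convergence. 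Also note that in the linearization the volume constraint does not kill the whole kernel: the first spherical harmonics (translations of the center) lie in the kernel of $\bar\Delta+n-\mathrm{avg}$, and they are handled by the Escher--Simonett-type argument the paper cites, which requires the smallness of oscillation furnished by the prior convergence of centers.
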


The volume preserving mean curvature flow
\begin{equation}\label{eqH}
	\frac{\partial}{\partial t}X(x,t)=(\phi(t)-H)\nu(x,t)
\end{equation}
was introduced by Huisken \cite{Hui87} in 1987 for convex hypersurfaces in the Euclidean space $\mathbb{R}^{n+1}$, and it has been proved that for any smooth convex initial hypersurface, the solution of the flow \eqref{eqH} converges smoothly to a round sphere. The analogue of the flow \eqref{eqH} in the hyperbolic space $\mathbb{H}^{n+1}$ was studied by Cabezas-Rivas and Miquel \cite{Cab-Miq2007} in 2007 assuming a stronger condition that the initial hypersurface is $h$-convex. There are further generalizations of the flow \eqref{eqH} in $\mathbb{H}^{n+1}$ with the mean curvature $H$ replaced by more general curvature functions including powers of the $k$th mean curvature $\sigma_k(\kappa),k=1,\cdots,n$ (see \cite{BenWei,Be-Pip2016,GLW-CAG,Mak2012,WX} for instance). In all cases, the $h$-convexity is assumed on the initial hypersurfaces.  This is mainly because that the $h$-convexity is convenient for the analysis of the curvature evolution equations and so that the tensor maximum principle or the constant rank theorem can be applied to derive that the $h$-convexity is preserved along the flow. Moreover, the $h$-convexity is strong enough geometrically such that the outer radius of the enclosed domain is uniformly controlled by its inner radius (see \cite{BM99}), and then the a prior $C^0$ estimate of the flow can be proved easily. In a recent work \cite{BenChenWei}, the first author with Andrews and Chen proved the smooth convergence of volume preserving $k$th mean curvature flows in $\mathbb{H}^{n+1}$ for initial hypersurfaces with positive sectional curvatures. This condition is weaker than $h$-convexity but still stronger than the convexity $\kappa_i>0.$

An open question in the field is whether the convexity ($\kappa_i>0$, $i=1,\cdots,n$) is sufficient to guarantee the smooth convergence of the volume preserving curvature flow in the hyperbolic space. Our result in Theorem \ref{theo} provides the first affirmative answer to this question. The curve case ($n=1$) of Theorem \ref{theo} was also treated recently by  the first and second authors in \cite{WY2022}, where the idea is that in this case there is only one curvature and we can calculate the curvature evolution explicitly to derive the convexity preserving immediately. However, the higher dimensional case ($n\geq 2$) as stated in Theorem \ref{theo} requires on some new ideas. The key step is again to show the convexity preserving, but the tensor maximum principle no longer works when applying to the evolution of the second fundamental form directly. Instead, we shall use the projection method via the Klein model of the hyperbolic space that was described earlier by the first author and Andrews in \cite{BenWei}. Based on this, we can treat the flow \eqref{flow-VMCF} as an equivalent flow in the Euclidean space and this allows us to derive a time-dependent positive lower bound on the principal curvatures, and a time-dependent upper bound on the Gauss curvature $K$. A continuation argument then implies that the flow exists for all positive time $t>0$.

To study the asymptotical behavior of the flow as time $t\to\infty$, we shall use some machinery from the theory of convex bodies in $\mathbb{H}^{n+1}$. In particular, we use the Blaschke selection theorem to show that for a subsequence of times $t_i\to\infty$, the enclosed domain $\Omega_{t_i}$ of $M_{t_i}$ converges in Hausdorff sense  to a limit convex domain $\hat{\Omega}$. We then apply the monotonicity of a certain quermassintegral to show that $\hat{\Omega}$ satisfies
\begin{equation}\label{s1-cur}
	\Phi_0(\hat{\Omega},\beta)=c\Phi_n(\hat{\Omega},\beta)
\end{equation}
for any Borel set $\beta$ in $\mathbb{H}^{n+1}$, where $\Phi_0$ and $\Phi_n$ are the hyperbolic curvature measures of $\hat{\Omega}$, and $c$ is a constant. A theorem of Kohlmann \cite{Koh1998} which characterises the geodesic ball in the hyperbolic space $\mathbb{H}^{n+1} (n\geq 2)$ using the equation \eqref{s1-cur} for curvature measures can be used to conclude that $\hat{\Omega}$ is a geodesic ball. Moreover, if we denote the center of the inner ball of $\Omega_t$ as $p_t$, then using the Alexandrov reflection argument and the subsequential Hausdorff convergence, we can deduce that for all time $t\to\infty$, the points $p_t$ converge to a fixed point $p$, which is the center of the ball $\hat{\Omega}$. With the help of this, we can prove the uniform positive bounds for the principal curvatures and then obtain the smooth convergence of the flow to a geodesic sphere.

%\begin{rem}
%Theorem \ref{theo} can be viewed as the first result on the non-local curvature flows of type \eqref{eqH} in hyperbolic space with only convexity assumption on the initial data for all dimension $ n\geq 2.$
%\end{rem}
The convergence result in Theorem \ref{theo} has an application in the Alexandrov-Fenchel inequalities for quermassintegrals in the hyperbolic space (see \S \ref{sec2} for the definitions). Along the flow \eqref{flow-VMCF}, we find that the $(n-1)$th quermassintegral $\mathcal{A}_{n-1}(\Omega_t)$ is monotone decreasing in time $t$. As the volume $|\Omega_t|$ is preserved, the smooth convergence proved in Theorem \ref{theo} yields the following Alexandrov-Fenchel inequality of quermassintegrals for convex domains in the hyperbolic space:
\begin{cor}\label{coro}
	Suppose that $\Omega\subset\mathbb{H}^{n+1}$ is a bounded weakly convex domain with smooth boundary $\partial\Omega$. Then the following inequality holds:
	\begin{equation}\label{eqA-F}
		\mathcal{A}_{n-1}(\Omega)\geq \psi_n\left(|\Omega|\right),
	\end{equation}
where $\psi_n:[0,\infty)\to \mathbb{R}$ is a strictly increasing function such that the equality is achieved for geodesic balls. Moreover, the equality holds in \eqref{eqA-F} if and only if $\Omega$ is a geodesic ball.
\end{cor}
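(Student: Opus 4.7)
The plan is to use the flow \eqref{flow-VMCF} itself as a volume-preserving deformation that monotonically decreases $\mathcal{A}_{n-1}$, so that Theorem~\ref{theo} reduces Corollary~\ref{coro} to its equality case at the limit geodesic ball. I first define
\begin{equation*}
\psi_n(V):=\mathcal{A}_{n-1}(B_V),
\end{equation*}
where $B_V\subset\mathbb{H}^{n+1}$ is any geodesic ball of volume $V$. Both $|B_\rho|$ and $\mathcal{A}_{n-1}(B_\rho)$ are strictly increasing in the radius $\rho$, so $\psi_n$ is strictly increasing on $[0,\infty)$, and \eqref{eqA-F} becomes an equality on every geodesic ball.

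Assume first that $\Omega$ is smooth and strictly convex, fix any $\alpha>0$, and run the flow \eqref{flow-VMCF} with $M_0=\partial\Omega$. By Theorem~\ref{theo}, the solution $\Omega_t$ exists for all $t\ge 0$, satisfies $|\Omega_t|\equiv|\Omega|$, and converges smoothly as $t\to\infty$ to the geodesic ball $B_{\rho_\infty}$ with $|B_{\rho_\infty}|=|\Omega|$. The central step is to compute $\frac{d}{dt}\mathcal{A}_{n-1}(\Omega_t)$ under the normal speed $f=\phi(t)-K^\alpha$: using the first variational formula for hyperbolic quermassintegrals (recorded in \S\ref{sec2}), this evolution reduces, up to a positive dimensional constant, to
\begin{equation*}
\frac{d}{dt}\mathcal{A}_{n-1}(\Omega_t)=\frac{1}{n+1}\int_{M_t}K\bigl(\phi(t)-K^\alpha\bigr)\,d\mu_t.
\end{equation*}
Since $K^\alpha$ is a monotone function of $K$ on $M_t$, Chebyshev's integral inequality for similarly ordered functions gives
\begin{equation*}
|M_t|\int_{M_t}K^{1+\alpha}\,d\mu_t\;\ge\;\int_{M_t}K\,d\mu_t\int_{M_t}K^\alpha\,d\mu_t\;=\;|M_t|\,\phi(t)\int_{M_t}K\,d\mu_t,
\end{equation*}
so $\int_{M_t}K(\phi(t)-K^\alpha)\,d\mu_t\le 0$ and hence $\mathcal{A}_{n-1}(\Omega_t)$ is non-increasing. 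Letting $t\to\infty$ and using the smooth convergence to $B_{\rho_\infty}$ yields
\begin{equation*}
\mathcal{A}_{n-1}(\Omega)\;\ge\;\mathcal{A}_{n-1}(B_{\rho_\infty})\;=\;\psi_n(|\Omega|).
\end{equation*}

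For the rigidity statement, if equality holds in \eqref{eqA-F} then $\frac{d}{dt}\mathcal{A}_{n-1}(\Omega_t)\equiv 0$, and the equality case of Chebyshev forces $K$ to be spatially constant on every $M_t$; hence $\phi(t)\equiv K^\alpha$, the flow is stationary, and $\Omega$ coincides with its smooth limit $B_{\rho_\infty}$, a geodesic ball. To relax strict convexity to the weakly convex hypothesis of the corollary, I would approximate $\Omega$ by smooth strictly convex domains — for instance by running \eqref{flow-VMCF} itself for a short positive time, which instantly improves weakly convex to strictly convex by parabolic smoothing while preserving the volume — and appeal to the continuity of $\mathcal{A}_{n-1}$ and $|\cdot|$ on convex bodies with respect to smooth (or $C^2$) convergence to conclude.

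The principal technical obstacle is the exact variational identity for $\mathcal{A}_{n-1}$ in $\mathbb{H}^{n+1}$: one must confirm that the would-be hyperbolic correction terms at index $n-1$ — which distinguish quermassintegral variations in curved ambient spaces from their Euclidean counterparts — cancel or reassemble so as to preserve the Chebyshev-type sign in the displayed evolution. Once this identity is in hand, the rest of the proof is a fairly routine flow-method deduction of an isoperimetric-type inequality, with a standard approximation argument to reach the weakly convex case.
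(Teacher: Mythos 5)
Your argument for strictly convex $\Omega$ is essentially the paper's: the variational formula \eqref{eqWk} with $k=n-1$ gives $\frac{d}{dt}\mathcal{A}_{n-1}(\Omega_t)=n\int_{M_t}K(\phi(t)-K^{\alpha})\,d\mu_t$ (your constant $\tfrac1{n+1}$ is off, but irrelevant for the sign), and your Chebyshev step is exactly the monotonicity \eqref{eqWmo} of Lemma \ref{lemmono}, so combined with Theorem \ref{theo} the inequality and its rigidity follow for convex domains just as in the paper; the ``technical obstacle'' you worry about is already settled by \eqref{eqWk}, which is recorded in \S\ref{sec2}.

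The genuine gaps are in the weakly convex case, which is the actual content of the corollary as stated. First, your approximation device --- run the flow \eqref{flow-VMCF} for a short time and invoke ``parabolic smoothing'' to turn weak convexity into strict convexity --- is not justified: when $K=0$ somewhere the speed $K^{\alpha}$ degenerates, so the equation is not uniformly parabolic, no short-time existence for merely weakly convex data is established anywhere in the paper (Theorem \ref{theo} assumes $\kappa_i>0$), and for Gauss-curvature-type speeds flat regions are known not to disappear instantaneously in general. The paper avoids this by projecting $\Omega$ to $B_1(0)\subset\mathbb{R}^{n+1}$ via the Klein model, where \eqref{eq-hij} shows the image is weakly convex in the Euclidean sense, approximating there by strictly convex bodies (e.g.\ by Euclidean mean curvature flow), pulling back, and using continuity of $\mathcal{A}_{n-1}$ and $|\cdot|$; you need some such substitute. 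Second, your proposal says nothing about the equality case when $\Omega$ is only weakly convex: your rigidity argument requires running the flow from $\Omega$ itself, which again presupposes strict convexity, and the limiting approximation only yields the inequality, never the characterization of equality. The paper closes this with a separate first-variation argument in the style of Guan--Li: on the open set $M_{+}=\{K>0\}$ one uses compactly supported normal variations (which stay weakly convex for small parameter) and the already-proved inequality to conclude $K\equiv\psi_n'(|\Omega|)$ on $M_{+}$, hence $M_{+}$ is closed, hence $M=M_{+}$ and $\Omega$ is in fact convex, after which the convex rigidity applies. Without an argument of this kind your proof of the ``only if'' direction is incomplete.
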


Here we say that a hypersurface is weakly convex if all principal curvatures $\kappa_i\geq 0$. We remark that the inequality \eqref{eqA-F} for convex domains in the hyperbolic space was proved also by Brendle, Guan and Li in a preprint \cite{BGL} using a different method. The argument we present here provides a new proof of \eqref{eqA-F}.

The paper is organized as follows: In $\S$\ref{sec2}, we collect some preliminaries including the geometry of hypersufaces in the hyperbolic space, the evolution equations along the flow \eqref{flow-VMCF}, the quermassintegrals and curvature measures in the hyperbolic space. In \S \ref{subsec}, we project the flow to the Euclidean space via the Klein model. This projection has the advantage that the second fundamental forms of the corresponding solutions have a simple relation. In order to show that convexity is preserving along the flow \eqref{flow-VMCF}, it suffices to show that the corresponding flow \eqref{projected flow eq} in the Euclidean space preserves the convexity. In $\S$\ref{sec3}, we give the a priori $C^0$ and $C^1$ estimates of the flow \eqref{flow-VMCF} and the projected flow \eqref{projected flow eq}, which follows by a similar argument as in previous work \cite{BenWei,BenChenWei}. In $\S$\ref{sec4}, we prove the positive lower bound for the principal curvatures along the flow \eqref{flow-VMCF}, by proving the corresponding estimate along the flow \eqref{projected flow eq} in the Euclidean space. In \S \ref{sec.upK}, we adapt Tso's\cite{Tso85} technique to prove an upper bound on the Gauss curvature on any finite time interval. Since we only assumed convexity, the terms involving global term $\phi(t)$ need to be carefully treated. This implies two-sided curvature bounds of the solution on any finite time interval, and then we obtain the long time existence of the flow \eqref{flow-VMCF} in $\S$\ref{sec5}. In \S \ref{sec.hau}, we show the subsequential Hausdorff convergence of $M_t$ and the convergence of the center of the inner ball of $\Omega_t$ to a fixed point. Finally, in \S \ref{final}, we complete the proofs of Theorem \ref{theo} and Corollary \ref{coro}.
\begin{ack}
The research was surpported by National Key R and D Program of China 2021YFA1001800 and 2020YFA0713100, National Natural Science Foundation of China NSFC11721101 and Research grant KY0010000052 from University of Science and Technology of China. B. Yang would like to thank Professor Jiayu Li for his constant support.
\end{ack}

\section{Preliminaries}\label{sec2}
In this section, we collect some preliminary results concerning the geometry of hypersurfaces in the hyperbolic space, the evolution equations for geometric quantities along the flow \eqref{flow-VMCF}, the quermassintegrals and curvature measures in the hyperbolic space.
\subsection{Hyperbolic space}
The hyperbolic space $\mathbb{H}^{n+1}$ can be viewed as a warped product manifold $(\mathbb{R}_{+}\times \mathbb{S}^n,g_{\mathbb{H}^{n+1}})$ with
\begin{equation*}
	g_{\mathbb{H}^{n+1}}=d\rho^2+\sinh^2\rho g_{\mathbb{S}^n},
\end{equation*}
where $g_{\mathbb{S}^n}$ is the round metric on unit sphere $\mathbb{S}^n$. Let $D$ be the Levi-Civita connection on $\mathbb{H}^{n+1}$. The vector field $V=\sinh \rho\partial_\rho$ is a conformal Killing field satisfying $DV=\cosh \rho g_{\mathbb{H}^{n+1}}.$

%Let $M^n(n\geq 1)$ be a smooth closed hypersurface in $\mathbb{H}^{n+1}$. We denote by $g_{ij}, h_{ij}$ and $\nu$ the induced metric, the second fundamental form and the unit outward normal vector of $M$. The Weingatern map is denoted by $\mathcal{W}=(h_i^j)$, where $h_i^j=h_{ik}g^{kj}$. The principal curvatures $\kappa=(\kappa_1,\cdots,\kappa_n)$ of $M$ are defined as the eigenvalues of $\mathcal{W}$. For $n=1$, $M$ is a curve and its has only one curvature which we denote by $\kappa$. For $n\geq 2$, the Gauss curvature $K$ and the mean curvature $H$ of the hypersurface $M^n$ are defined as $K=\prod_{i=1}^n{\kappa_i}$ and $H=\Sigma_{i=1}^{n}{\kappa_i}$.

Let $\Omega$ be a convex domain in $\mathbb{H}^{n+1}$ with a smooth boundary $M=\partial\Omega$. Then $M$ is a smooth convex hypersurface in $\mathbb{H}^{n+1}$. We denote by $g_{ij}, h_{ij}$ and $\nu$ the induced metric, the second fundamental form and the unit outward normal vector of $M$ respectively. As $M$ is convex, there exists a point $p_0\in\Omega$ such that $M$ is star-shaped with respect to $p_0$ and can be written as a radial graph $M=\{(\rho(\theta),\theta),~\theta\in \mathbb{S}^n\}$ with respect to $p_0$ for a smooth function $\rho\in C^\infty(\mathbb{S}^n)$. Equivalently, the support function of $M$ with respect to $p_0\in \Omega$ defined by
\begin{equation*}
  u=\langle V,\nu\rangle=\langle \sinh \rho\partial_\rho,\nu\rangle
\end{equation*}
is positive everywhere on $M$.  It is well known that (see e.g.\cite{GL15})
\begin{align*}
	g_{ij}&=\rho_{i}\rho_{j}+\sinh^2\rho\sigma_{ij},\\
	h_{ij}&=\frac{1}{\sqrt{\sinh^2\rho+|\bar{\nabla} \rho|^2}}(-(\sinh \rho)\rho_{ij}+2(\cosh \rho)\rho_i \rho_j+\sinh^2\rho\cosh \rho\sigma_{ij}),\\
\nu=&\frac{1}{\sqrt{1+|\bar{\nabla}\rho|^2/\sinh^2\rho}}\left(1,-\frac{\rho_1}{\sinh^2\rho},\cdots,-\frac{\rho_n}{\sinh^2\rho}\right),
\end{align*}
where $\bar{\nabla}$ is the covariant derivative on $\mathbb{S}^n$ with respect to the round metric $g_{\mathbb{S}^n}=(\sigma_{ij})$ and $\rho_i=\bar{\nabla}_i\rho, \rho_{ij}=\bar{\nabla}_i\bar{\nabla}_j\rho$. It follows that the Gauss curvature $K$ of $M$ can be expressed as a function of $\rho$ and its first and second derivatives:
\begin{equation}\label{eq-Gauss}
	K=\frac{\det h_{ij}}{\det g_{ij}}=\frac{\det(-\sinh \rho\rho_{ij}+2\cosh \rho\rho_i \rho_j+\sinh^2\rho\cosh \rho\sigma_{ij})}{(\sinh^2\rho+|\bar{\nabla} \rho|^2)^{\frac{n+2}{2}}(\sinh \rho)^{2n-2}}.
\end{equation}

\subsection{Evolution equations}
Let $M_t$ be a smooth solution to the curvature flow \eqref{flow-VMCF} in the hyperbolic space $\mathbb{H}^{n+1}$. We have the following evolution equations (see \cite{BenWei}) for the induced metric $g_{ij}$, the area element $d{\mu_t}$ and the speed function $K^{\alpha}$:
\begin{align}
	\frac{\partial}{\partial t}g_{ij}&=2\left(\phi(t)-K^{\alpha}\right)h_{ij},\label{eq-g}\\
	\frac{\partial}{\partial t}d\mu_t&=H\left(\phi(t)-K^{\alpha}\right)d\mu_t,\label{eq-dmu}\\
	\frac{\partial}{\partial t}K^{\alpha}&=\alpha K^{\alpha-1}\dot{K}^{ij}\left(\nabla_i\nabla_j{K^{\alpha}}+(K^{\alpha}-\phi(t))(h_i^k h_{kj}-\delta_{ij})\right),\label{eq-KK}
\end{align}
where $\dot{K}^{ij}$ denote the derivatives of $K$ with respect to the components of the second fundamental form, and $\nabla$ denotes the Levi-Civita connection on $M_t$ with respect to the induced metric $g_{ij}$. On the time interval when $M_t$ is star-shaped with respect to some point $p_0$, the support function $u(x,t)=\langle{\sinh \rho_{p_0}(x)\partial_{\rho_{p_0}},\nu}\rangle$ of $M_t$ with respect to $p_0$  evolves by (see \cite[Lemma 4.3]{BenWei}):
\begin{equation}\label{equeq}
	\frac{\partial}{\partial t}{u}=\alpha K^{\alpha-1}\dot{K}^{ij}\nabla_i\nabla_j{u}+\cosh{\rho_{p_0}}(x)\left(\phi(t)-(n\alpha+1)K^{\alpha}\right)+\alpha K^{\alpha}Hu.
\end{equation}

\subsection{Quermassintegrals}
Let $\mathcal{K}(\mathbb{H}^{n+1})$ be the set of compact convex sets in $\mathbb{H}^{n+1}$ with nonempty interior.  For any $\Omega\in \mathcal{K}(\mathbb{H}^{n+1})$ , the quermassintegrals of $\Omega$ are defined as follows (see \cite[Definition 2.1]{Sol05} \footnote{Note that the definition for $\mathcal{A}_k$ given here is the same as that for $W_{k+1}$ given in \cite{Sol05} up to a constant. In fact, we have $\mathcal{A}_k=(n+1)\binom{n}{k}W_{k+1}$.}):
\begin{equation}\label{Wk}
	\mathcal{A}_k(\Omega)=(n-k)\binom{n}{k}\frac{\omega_k\cdots\omega_0}{\omega_{n-1}\cdots\omega_{n-k-1}}\int_{\mathcal{L}_{k+1}}{\chi(L_{k+1}\cap\Omega)dL_{k+1}}
\end{equation}
for $k=0,1,\dots,n-1$, where $\omega_k=|\mathbb{S}^k|$ denotes the area of $k$-dimensional unit sphere,  $\mathcal{L}_{k+1}$ is the space of $(k+1)$-dimensional totally geodesic subspaces $L_{k+1}$ in $\mathbb{H}^{n+1}$ and $\binom{n}{k}=\frac{n!}{k!(n-k)!}$. The function $\chi$ is defined to be 1 if $L_{k+1}\cap\Omega\neq\emptyset$ and to be 0 otherwise. In particular, we have
\begin{equation*}
\mathcal{A}_{-1}(\Omega)=|\Omega|,\qquad \mathcal{A}_0(\Omega)=|\partial\Omega|,\qquad \mathcal{A}_{n}(\Omega)=\frac{\omega_{n}}{n+1}.
\end{equation*}

If the boundary $M=\partial\Omega$ is smooth (or at least of class $C^2$), we can define the principal curvatures $\kappa=(\kappa_1,\dots,\kappa_n)$ as the eigenvalues of the Weingarten matrix $\mathcal{W}$ of $M$. For each $k\in\{1,\dots,n\}$, the $k$th mean curvature $\sigma_k$ of $M$ is then defined as the $k$th elementary symmetric functions of the principal curvatures of $M$:
\begin{equation*}
	\sigma_k=\sum_{1\leq i_1<\cdots<i_k\leq n}{\kappa_{i_1}\cdots\kappa_{i_k}}.
\end{equation*}
These include the mean curvature $H=\sigma_1$ and Gauss curvature $K=\sigma_n$ as special cases.
%The curvature integrals are then defined as follows:
%\begin{equation*}
%	V_{n-k}(\Omega)=\int_{\partial\Omega}{\sigma_k},\quad k=0,1,\dots,n.
%\end{equation*}
In the smooth case, the quermassintegrals and the curvature integrals of a smooth convex domain $\Omega$ in $\mathbb{H}^{n+1}$ are related as follows:
\begin{align}%
	%\mathcal{A}_0(\Omega)=&V_n(\Omega)=|\partial\Omega|,\label{eq-V0}\\
	\mathcal{A}_1(\Omega)=&\int_{\partial\Omega}{\sigma_1}d\mu-n\mathcal{A}_{-1}(\Omega),\label{eq-V1}\\
	\mathcal{A}_k(\Omega)=&\int_{\partial\Omega}{\sigma_k}d\mu-\frac{n-k+1}{k-1}\mathcal{A}_{k-2}(\Omega),\quad k=2,\cdots,n \label{eq-VW}.
\end{align}
The quermassintegrals for smooth domains satisfy a nice variational property (see \cite{BA97}):
\begin{equation}\label{eqWk}
  \frac{d}{dt}\mathcal{A}_k(\Omega_t)=(k+1)\int_{M_t}\eta \sigma_{k+1}d\mu_t,\quad k=0,\cdots,n-1
\end{equation}
along any normal variation with velocity $\eta$.

The quermassintegrals defined by \eqref{Wk} are monotone with respect to inclusion of convex sets. That is, if $E,F\in \mathcal{K}(\mathbb{H}^{n+1})$ satisfy $E\subset F$, we have
\begin{equation}\label{s2.Akmo}
  \mathcal{A}_k(E)\leq \mathcal{A}_k(F)
\end{equation}
for all $k=0,1,\cdots,n$. Moreover, they are continuous with respect to the Hausdoff distance. Recall that the Hausdorff distance between two convex sets $\Omega, L\in \mathcal{K}(\mathbb{H}^{n+1})$ is defined as
\begin{equation*}
	\mathrm{dist}_{\mathcal{H}}(\Omega,L):=\mathrm{inf}\{\lambda>0:\Omega\subset B_{\lambda}(L)\,  \text{and}\, L\subset B_{\lambda}(\Omega)\},
\end{equation*}
where $	B_{\lambda}(L):=\{q\in \mathbb{H}^{n+1}|~\mathrm{d}_{\mathbb{H}^{n+1}}(q,L)<\lambda\}$.
\begin{lem}\label{inquer}
	Suppose that $\{\Omega_i\}_{i=1}^\infty, \Omega\in\mathcal{K}(\mathbb{H}^{n+1})$ and $\Omega_i$ converges to $\Omega$ in the Hausdorff sense. Then we have $\lim_{i\to\infty}{\mathcal{A}_k(\Omega_i)}=\mathcal{A}_k(\Omega)$ for all $k=-1,0,\cdots, n-1$.
\end{lem}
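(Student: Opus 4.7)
The plan is to separate the volume case $k=-1$ from the integral-geometric cases $k\in\{0,\dots,n-1\}$ defined by \eqref{Wk}.

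\textbf{Volume case.} For $k=-1$, continuity of volume under Hausdorff convergence is a standard property of convex bodies. Writing $\Omega_i\subset B_{\varepsilon_i}(\Omega)$ and $\Omega\subset B_{\varepsilon_i}(\Omega_i)$ with $\varepsilon_i\to 0$, one has $|\Omega_i|\le |B_{\varepsilon_i}(\Omega)|$ and $|\Omega|\le|B_{\varepsilon_i}(\Omega_i)|$. Since $\partial\Omega$ is a Lipschitz codimension-one submanifold (being the boundary of a convex body with non-empty interior), $|B_\varepsilon(\Omega)\setminus\Omega|\to 0$ as $\varepsilon\to 0^+$. Together with the fact that the whole sequence $\{\Omega_i\}$ sits in a fixed geodesic ball (also from Hausdorff convergence), a routine squeeze delivers $|\Omega_i|\to|\Omega|$.

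\textbf{Higher quermassintegrals.} For $k\in\{0,\dots,n-1\}$, I would apply Lebesgue's dominated convergence theorem to the integrand $f_i(L):=\chi(L\cap\Omega_i)$ of \eqref{Wk}, viewed as a function on $\mathcal{L}_{k+1}$. Pointwise a.e.\ convergence follows from a case analysis on the position of $L\in\mathcal{L}_{k+1}$ relative to $\Omega$. If $L\cap\operatorname{int}(\Omega)\neq\emptyset$, then an open ball centered at an interior intersection point of $L$ with $\Omega$ eventually lies inside $\Omega_i$, so $f_i(L)=1=\chi(L\cap\Omega)$ for large $i$. If $L\cap\bar{\Omega}=\emptyset$, positivity of the hyperbolic distance from $L$ to $\Omega$ together with Hausdorff convergence forces $f_i(L)=0=\chi(L\cap\Omega)$ for large $i$. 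The remaining flats are the \emph{supporting $(k+1)$-flats} of $\Omega$, i.e.\ those with $L\cap\partial\Omega\neq\emptyset=L\cap\operatorname{int}(\Omega)$; these form a set of invariant measure zero in $\mathcal{L}_{k+1}$ because they are swept out by the image of a parameter space of strictly lower dimension than $\dim\mathcal{L}_{k+1}$ (one pair $(p,L)$ with $p\in\partial\Omega$ and $L$ a supporting flat at $p$). For an integrable majorant, Hausdorff convergence provides a fixed geodesic ball $B$ with $\Omega,\Omega_i\subset B$, so $f_i\le \chi(\cdot\cap B)$, and the latter integrates to a constant multiple of $\mathcal{A}_k(B)<\infty$. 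Dominated convergence then gives $\mathcal{A}_k(\Omega_i)\to\mathcal{A}_k(\Omega)$.

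\textbf{Main obstacle.} Everything is routine except the measure-zero assertion for the supporting $(k+1)$-flats. This is a classical fact of integral geometry on space forms, but it is the one step deserving genuine care; it ultimately rests on the invariance of the kinematic measure $dL_{k+1}$ under the isometry group of $\mathbb{H}^{n+1}$ and the smooth homogeneous structure on $\mathcal{L}_{k+1}$. As an alternative I would invoke this from \cite{Sol05} rather than reprove it in detail, keeping the dominated convergence as the backbone of the argument.
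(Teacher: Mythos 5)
Your argument is correct in substance, but it takes a different route from the paper. The paper does not argue on the space of flats directly: it uses the fibration of $\mathcal{L}_{k+1}$ over the Grassmannian $G_{n+1,n-k}$ and the identity \eqref{s2.Ak2}, which turns $\mathcal{A}_k(\Omega)$ into an integral over $G_{n+1,n-k}$ of the weighted volumes $\int_{\Pi_{L_{n-k}[o]}(\Omega)}\cosh^{k+1}(\rho)\,d\mu_{n-k}$ of the orthogonal projections of $\Omega$. Since projection onto a totally geodesic subspace is distance non-increasing, $\Pi_{L_{n-k}[o]}(\Omega_i)\to\Pi_{L_{n-k}[o]}(\Omega)$ in Hausdorff distance, so the continuity question reduces fiberwise to continuity of a weighted volume of convex bodies in an $(n-k)$-dimensional space (where the exceptional set is just the boundary of the projected body, trivially a null set), followed by dominated convergence over the compact Grassmannian. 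Your route instead applies dominated convergence directly to $\chi(L\cap\Omega_i)$ on $\mathcal{L}_{k+1}$, with the same majorant $\chi(\cdot\cap B)$, and the whole weight of the proof falls on the classical integral-geometric fact that the supporting $(k+1)$-flats of a convex body form a $dL_{k+1}$-null set. That fact is true, and citing it (Santal\'o/Solanes) is legitimate; but be aware that your dimension-count heuristic as stated is not a proof for general (non-smooth) convex bodies — at a vertex of a polytope the supporting flats through a single boundary point already form a positive-dimensional family, so the parametrization by pairs $(p,L)$ does not obviously have lower dimension than $\mathcal{L}_{k+1}$, and one needs either the cited lemma or, in effect, the paper's projection formula (under which the supporting flats over a fixed $L_{n-k}[o]$ are exactly those with foot point in $\partial\,\Pi_{L_{n-k}[o]}(\Omega)$, a null set in the fiber). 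Your separate treatment of $k=-1$ by standard volume continuity is fine and fills a case the paper leaves implicit; the uniformity of the outer-parallel-volume excess over $i$ that your squeeze needs follows from the Steiner-type formula and the monotonicity \eqref{s2.Akmo} since all $\Omega_i$ lie in a fixed ball. In short: the paper's proof buys an elementary, self-contained reduction to boundaries of convex bodies in each fiber, while yours is shorter on the space of flats but outsources the one genuinely delicate step to the literature.
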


To see this, we use an expression of the measure $dL_{k+1}$. Every totally geodesic subspace $L_{k+1}$ in $\mathbb{H}^{n+1}$ is determined by its orthogonal subspace $L_{n-k}[o]$, which passes through the origin $o\in\mathbb{H}^{n+1}$, and by the intersection point $m=L_{k+1}\cap L_{n-k}[o]$. In this way, $\mathcal{L}_{k+1}$ can be identified as a bundle over the Grassmannian manifolds $G_{n+1,n-k}$ which consists of all subspaces $L_{n-k}[o]$, and then $dL_{k+1}$ is written as (see \cite{San04})
\begin{equation}
	dL_{k+1}=\cosh^{k+1}(\rho) d\mu_{n-k} dL_{n-k}[o],
\end{equation}
where $d\mu_{n-k}$ is the volume element on $L_{n-k}[o]$, $dL_{n-k}[o]$ is the volume element on $G_{n+1,n-k}$ and $\rho=d_{\mathbb{H}^{n+1}}(x,o)$ denotes the distance of a point $x$ in $L_{n-k}[o]$ to the origin. Then we can rewrite the integral in \eqref{Wk} equivalently as
\begin{equation}\label{s2.Ak2}
	\int_{\mathcal{L}_{k+1}}{\chi(L_{k+1}\cap\Omega)dL_{k+1}}=\int_{G_{n+1,n-k}}\left(\int_{\Pi_{L_{n-k}[o]}(\Omega)}\cosh^{k+1}(\rho) d\mu_{n-k}\right)dL_{n-k}[o],
\end{equation}
where $\Pi_{L_{n-k}[o]}:\mathbb{H}^{n+1}\to\mathbb{H}^{n+1}$ is the orthogonal projection over $L_{n-k}[o]\in G_{n+1,n-k}$ along geodesics. Then the conclusion in Lemma \ref{inquer} follows from \eqref{s2.Ak2} immediately.

\subsection{Curvature measures in $\mathbb{H}^{n+1}$}For any $\Omega
\in \mathcal{K}(\mathbb{H}^{n+1})$ and $\varepsilon>0$, define the parallel set
\begin{equation*}
	\Omega_{\varepsilon}=\{x\in\mathbb{H}^{n+1}|~d_{\mathbb{H}^{n+1}}(x,\Omega)\leq\varepsilon\}.
\end{equation*}
The maps $f_\Omega:\mathbb{H}^{n+1}\setminus \Omega\to\partial \Omega$ and $F_\Omega:\mathbb{H}^{n+1}\setminus \Omega\to T_{\partial \Omega}\mathbb{H}^{n+1}$:
\begin{equation*}
	d_{\mathbb{H}^{n+1}}(f_\Omega(x),x)=d_{\mathbb{H}^{n+1}}(x,\Omega) \quad \text{and} \quad x=\mathrm{exp}_{f_\Omega(x)}^{\mathbb{H}^{n+1}}(d(\Omega,x)F_{\Omega}(x))
\end{equation*}
are well-defined. For any Borel set $\beta\subset\mathbb{H}^{n+1}$, define the local parallel set
\begin{equation*}
	M_{\varepsilon}(\Omega,\beta)=f_{\Omega}^{-1}(\beta\cap \partial \Omega)\cap(\Omega_{\varepsilon}\setminus \Omega).
\end{equation*}
\begin{figure}
  \centering
 \begin{tikzpicture}
\begin{scope}[even odd rule]
\clip (-3,-3.4) [rounded corners=10pt] -- (-3.6,-2.6) -- (-3.5,-1.4)
 [rounded corners=12pt]--(-2.8,-0.5)
 [rounded corners=12pt]--(-1.7,-0.6)
 [rounded corners=12pt]--(-0.3,-1.2)
 [rounded corners=9pt]--(-0.1,-1.9)
  [rounded corners=12pt]--(-0.4,-2.7)--(-1.9,-3.6)
  [rounded corners=10pt]--cycle (-3,-3.9) [rounded corners=10pt] -- (-3.9,-2.9) --(-4.1,-1.7)-- (-3.6,-0.8)
 [rounded corners=12pt]--(-2.8,-0.1)
 [rounded corners=12pt]--(-1.7,0)
 [rounded corners=12pt]--(-0.2,-0.5)
 [rounded corners=9pt]--(0.4,-1.9)
  [rounded corners=12pt]--(0.2,-2.7)--(-1,-3.8)
  [rounded corners=10pt]--cycle;
\path[fill=green!20,dashed,draw] (-1.75,0.2) -- (-1.9,-1) -- (-0.6,-1.2) -- (-0.3,-0.3) --cycle;
\end{scope}
\path[fill=green!5,draw]
(-3,-3.4) [rounded corners=10pt] -- (-3.6,-2.6) -- (-3.5,-1.4)
 [rounded corners=12pt]--(-2.8,-0.5)
 [rounded corners=12pt]--(-1.7,-0.6)
 [rounded corners=12pt]--(-0.3,-1.2)
 [rounded corners=9pt]--(-0.1,-1.9)
  [rounded corners=12pt]--(-0.4,-2.7)--(-1.9,-3.6)
  [rounded corners=10pt]--cycle;
\path[draw,dashed,thick,color=blue]
(-3,-3.9) [rounded corners=10pt] -- (-3.9,-2.9) --(-4.1,-1.7)-- (-3.6,-0.8)
 [rounded corners=12pt]--(-2.8,-0.1)
 [rounded corners=12pt]--(-1.7,0)
 [rounded corners=12pt]--(-0.2,-0.5)
 [rounded corners=9pt]--(0.4,-1.9)
  [rounded corners=12pt]--(0.2,-2.7)--(-1,-3.8)
  [rounded corners=10pt]--cycle;

\node at (-1.2,-2.9) {$\Omega$};
\node at (0.8,-1.9) {$\Omega_\varepsilon$};
\draw[-stealth,dashed] (-1,-0.6) to [out=80,in=170] (0.5,0.4) node[right]{$M_{\varepsilon}(\Omega,\beta)$};
\draw[-stealth,dashed] (-1.5,-0.7) to [out=-170,in=80] (-2,-1.4) node[below]{$\beta\cap\partial \Omega$};
\end{tikzpicture}
 \caption{Local parallel set $M_{\varepsilon}(\Omega,\beta)$}%\label{}
\end{figure}
Following \cite{Koh1991}, given a convex set $\Omega$ and $\varepsilon>0$, let us define a Radon measure $\mu_\varepsilon$ on the Borel $\sigma$-algebra $\mathcal{B}(\mathbb{H}^{n+1})$ of the hyperbolic space by
\begin{equation*}
	\mu_{\varepsilon}(\Omega,\beta)=\mathrm{Vol}_{\mathbb{H}^{n+1}}(M_{\varepsilon}(\Omega,\beta)).
\end{equation*}
Set
\begin{equation*}
	l_{k+1}(t)=\int_0^{t}{\sinh^{k}(s)\cosh^{n-k}(s) ds},\quad k=0,1,\cdots,n
\end{equation*}
and $l_0\equiv 1$. Then there holds the following Steiner-type formula:
\begin{equation*}
	\mu_{\varepsilon}(\Omega,\beta)=\sum_{k=0}^{n}l_{n+1-k}(\varepsilon)\Phi_{k}(\Omega,\beta),\quad \forall\beta\in\mathcal{B}(\mathbb{H}^{n+1}).
\end{equation*}
The coefficients $\Phi_0(\Omega,\cdot),\cdots,\Phi_n(\Omega,\cdot)$ are called the curvature measures of the convex body $\Omega$, which are Borel measures on $\mathbb{H}^{n+1}$.

When the boundary $\partial \Omega$ is smooth, $\Phi_k(\Omega,\cdot)$ has a nice expression:
\begin{equation*}
	\Phi_k(\Omega,\beta)=\int_{\partial \Omega\cap\beta}\sigma_{n-k}d\mu,
\end{equation*}
where  $\sigma_{n-k}$ is the $(n-k)$th mean curvature of $\partial \Omega$.  We refer \cite{Koh1991} to the readers for a general integral representation for $\Phi_k(\Omega,\cdot)$.

As in the classical setting of convex bodies in the Euclidean space, the curvature measures introduced by Kohlmann are solid enough to be weakly continuous with respect to the topology induced on $\mathcal{K}(\mathbb{H}^{n+1})$.
\begin{thm}[\cite{VG-2019}]\label{s2.thmcurv}
	Let $\{\Omega_j\}_{j=1}^{\infty}\subset \mathcal{K}(\mathbb{H}^{n+1})$ be a sequence of convex sets such that $\Omega_j\to \Omega$ as $j\to\infty$ in the Hausdorff topology. Then for every $k=0,\cdots,n$ we have
	\begin{equation*}
		\Phi_k(\Omega_j,\cdot)\to\Phi_k(\Omega,\cdot)
	\end{equation*}
	as $j\to\infty$, weakly in the sense of measure.
\end{thm}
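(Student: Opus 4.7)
The plan is to reduce weak convergence of the curvature measures $\Phi_k(\Omega_j, \cdot)$ to that of the local parallel volumes $\mu_\varepsilon(\Omega_j, \cdot)$, and then to extract each $\Phi_k(\Omega, \cdot)$ by inverting the Steiner-type formula at finitely many well-chosen values of $\varepsilon$. The key observation is that the $n+1$ coefficient functions $\varepsilon \mapsto l_{n+1-k}(\varepsilon)$, $k = 0, \ldots, n$, have pairwise distinct orders of vanishing at the origin and are therefore linearly independent on any interval $(0, \varepsilon_\ast)$, so a suitable selection of $n+1$ values of $\varepsilon$ already determines all the $\Phi_k$.

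First I would prove that, for each fixed $\varepsilon > 0$, one has $\mu_\varepsilon(\Omega_j, \cdot) \to \mu_\varepsilon(\Omega, \cdot)$ weakly as Radon measures on $\mathbb{H}^{n+1}$. Unwinding the definition of $M_\varepsilon(\Omega, \beta)$, any $f \in C_c(\mathbb{H}^{n+1})$ satisfies
\[
\int_{\mathbb{H}^{n+1}} f \, d\mu_\varepsilon(\Omega_j, \cdot) = \int_{(\Omega_j)_\varepsilon \setminus \Omega_j} f\bigl(f_{\Omega_j}(x)\bigr) \, d\mathrm{Vol}(x).
\]
Hausdorff convergence $\Omega_j \to \Omega$ gives locally uniform convergence of the distance functions $d_{\mathbb{H}^{n+1}}(\cdot, \Omega_j) \to d_{\mathbb{H}^{n+1}}(\cdot, \Omega)$, so the indicator of $(\Omega_j)_\varepsilon \setminus \Omega_j$ converges pointwise to that of $\Omega_\varepsilon \setminus \Omega$ off the negligible level hypersurfaces $\partial \Omega \cup \partial \Omega_\varepsilon$. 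Convexity of $\Omega$ and $\Omega_j$ ensures that each nearest-point projection is single-valued and $1$-Lipschitz, from which a routine argument gives $f_{\Omega_j}(x) \to f_\Omega(x)$ for every $x \notin \Omega$. Since the integrands are uniformly bounded and supported in a common compact set, dominated convergence yields the claim.

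To finish, pick any $0 < \varepsilon_0 < \varepsilon_1 < \cdots < \varepsilon_n$ small enough that the $(n+1)\times(n+1)$ matrix $A := \bigl(l_{n+1-k}(\varepsilon_i)\bigr)_{0 \le i, k \le n}$ is invertible; this is guaranteed by the linear independence noted above. Applying the Steiner-type formula to each $\varepsilon_i$ and inverting $A$ yields, with $(c_{ki}) := A^{-1}$,
\[
\int f \, d\Phi_k(\Omega_j, \cdot) = \sum_{i=0}^{n} c_{ki} \int f \, d\mu_{\varepsilon_i}(\Omega_j, \cdot), \qquad k = 0, \ldots, n,
\]
so sending $j \to \infty$ and invoking the previous step delivers the weak convergence $\Phi_k(\Omega_j, \cdot) \to \Phi_k(\Omega, \cdot)$ for every $k$. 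The main obstacle is the first step: justifying the pointwise convergence of $f_{\Omega_j}$ together with a uniform dominating function hinges essentially on convexity and on the fact that $\varepsilon$-level sets of the distance function are negligible; without these two ingredients, one would have no control over how boundary strata might accumulate in the Hausdorff limit, and the change-of-variables identity displayed above could fail in the limit.
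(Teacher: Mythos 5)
The paper does not prove this statement at all: it is quoted verbatim from the reference [VG-2019] (Giona) and used as a black box, so there is no internal proof to compare against. Your argument is the standard route to such weak-continuity results (it is the hyperbolic analogue of Schneider's Euclidean proof): first establish weak convergence of the local parallel-volume measures $\mu_\varepsilon(\Omega_j,\cdot)\to\mu_\varepsilon(\Omega,\cdot)$, then recover the $\Phi_k$ by inverting the Steiner-type formula at $n+1$ values of $\varepsilon$, and it is correct as outlined. The supporting facts you invoke all hold in this setting: $\mu_\varepsilon(\Omega,\cdot)$ is the pushforward of $\mathrm{Vol}|_{\Omega_\varepsilon\setminus\Omega}$ under $f_\Omega$, so your change-of-variables identity is exact; Hausdorff convergence gives $\sup_x|d(x,\Omega_j)-d(x,\Omega)|\le \mathrm{dist}_{\mathcal H}(\Omega_j,\Omega)\to0$, and since the sets in $\mathcal{K}(\mathbb{H}^{n+1})$ have nonempty interior, $\partial\Omega$ and $\partial\Omega_\varepsilon$ are Lebesgue-null, so the indicators converge a.e.; uniqueness of the nearest-point projection onto a convex set in $\mathbb{H}^{n+1}$ gives $f_{\Omega_j}\to f_\Omega$ off $\Omega$ by the usual subsequence argument; and the sets $(\Omega_j)_\varepsilon$ lie in a fixed compact set, so dominated convergence applies. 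For the inversion step, note that $\frac{d}{d\varepsilon}\sum_k c_k l_{n+1-k}(\varepsilon)=\cosh^n(\varepsilon)\sum_k c_k\tanh^{n-k}(\varepsilon)$, so linear independence of $l_1,\dots,l_{n+1}$ on any interval is immediate; this guarantees the \emph{existence} of points $\varepsilon_0<\dots<\varepsilon_n$ with $\det\bigl(l_{n+1-k}(\varepsilon_i)\bigr)\neq0$ (not that every small choice works), which is all your argument needs. With these small points made explicit, your proof is a self-contained and correct substitute for the citation; what the citation buys the paper is simply not having to carry out this argument, while your version makes transparent that the result needs nothing beyond Kohlmann's Steiner formula and elementary measure theory.
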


The following characterization of geodesic balls via the hyperbolic curvature measures was proved by Kohlmann \cite{Koh1998} and can be viewed as a generalization of the classical Alexandrov Theorem in differential geometry.

\begin{thm}[\cite{Koh1998}]\label{gAT}
Let $n\geq 2$ and $k\in\{0,\dots,n-2\}$. Assume that $\Omega$ is a compact, connected, locally convex subset of $\mathbb{H}^{n+1}$ with nonempty interior satisfying
\begin{equation}\label{Ath}
	\Phi_{k}(\Omega,\beta)=c\Phi_n(\Omega,\beta)
\end{equation}
for any Borel set $\beta\in\mathcal{B}(\mathbb{H}^{n+1})$, where $c>0$ is a constant, then $\Omega$ is a geodesic ball.
\end{thm}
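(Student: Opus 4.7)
My plan is to first reduce the integral identity \eqref{Ath} to a smooth pointwise constant $(n-k)$-th mean curvature equation on $\partial\Omega$, and then invoke an Alexandrov-type rigidity argument in $\mathbb{H}^{n+1}$. Note that $\Phi_n(\Omega,\cdot)$ is the Hausdorff surface measure on $\partial\Omega$, while in the smooth case $\Phi_k(\Omega,\beta)=\int_{\partial\Omega\cap\beta}\sigma_{n-k}\,d\mu$, so the hypothesis $\Phi_k(\Omega,\cdot)=c\Phi_n(\Omega,\cdot)$ is formally equivalent to $\sigma_{n-k}\equiv c$ on $\partial\Omega$ with $n-k\in\{2,\dots,n\}$. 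To handle the a priori non-smooth case I would pass to the outer parallel body $\Omega_\varepsilon$, whose boundary is $C^{1,1}$ and whose curvature measures are controlled explicitly by the Steiner-type formula already recorded. Combining the weak continuity in Theorem \ref{s2.thmcurv} with Kohlmann's integral representation of $\Phi_k$ via support measures, the identity \eqref{Ath} can be transferred to each $\Omega_\varepsilon$. The convexity of $\Omega$ places the support function inside the Garding $(n-k)$-cone, so $\sigma_{n-k}$ acts as a fully nonlinear elliptic operator there; an Evans--Krylov/Caffarelli-type bootstrap upgrades the regularity to $C^\infty$, and letting $\varepsilon\to 0$ yields a smooth convex $\partial\Omega$ with $\sigma_{n-k}\equiv c$.

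With smoothness in hand, I would apply the Alexandrov moving-plane method adapted to $\mathbb{H}^{n+1}$. Pick any unit direction and a geodesic hyperplane $P$ orthogonal to it; slide $P$ across $\Omega$ and at each stage compare $\Omega$ with its isometric reflection $\Omega^{\ast}$ through $P$. Since $\sigma_{n-k}=c$ is invariant under hyperbolic isometries and elliptic on the $(n-k)$-cone, the Serrin-type strong maximum principle and Hopf boundary lemma force $\partial\Omega$ and $\partial\Omega^{\ast}$ to coincide as soon as they touch tangentially. Varying the direction shows $\Omega$ is symmetric across some geodesic hyperplane perpendicular to every direction, hence is a geodesic ball. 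A parallel approach is through the hyperbolic Minkowski identities
\begin{equation*}
\int_{\partial\Omega}\bigl(\sigma_{m-1}\cosh\rho-u\,\sigma_{m}\bigr)d\mu=0,\qquad m=1,\dots,n,
\end{equation*}
combined with the Newton--MacLaurin inequalities $\sigma_m^{1/m}\geq \sigma_{m+1}^{1/(m+1)}$: the constancy of $\sigma_{n-k}$ forces these inequalities to saturate, $\partial\Omega$ becomes totally umbilical, and in $\mathbb{H}^{n+1}$ the only closed umbilical hypersurfaces are geodesic spheres.

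The main obstacle I anticipate is the regularity step. Starting from only the weak identity \eqref{Ath} on a set that is a priori merely locally convex, one must justify passing to a pointwise fully nonlinear elliptic equation without any preliminary $C^2$ control. The delicate ingredient is verifying that the $(n-k)$-cone condition is preserved under the parallel-body approximation and the subsequent bootstrap, and that the error terms coming from the Steiner decomposition on $\Omega_\varepsilon$ vanish in the limit $\varepsilon\to 0$; once this is in place, either the hyperbolic Serrin reflection or the Montiel--Ros/Brendle integral method closes the argument.
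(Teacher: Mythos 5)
The paper does not prove this statement at all: it is quoted verbatim from Kohlmann \cite{Koh1998}, whose argument works directly in the category of convex bodies (Steiner formulae, measure-theoretic Minkowski-type identities and a Heintze--Karcher type inequality), precisely because no smoothness is assumed. Your proposal instead tries to reduce to the smooth case and then conclude by moving planes or by the Montiel--Ros identities; in the smooth case the statement is indeed just Montiel--Ros \cite{MS91}, as the paper itself remarks. The problem is that your reduction step does not work, and it is exactly the non-smooth case that matters here: in the paper the theorem is applied to the Blaschke--Hausdorff limit $\hat{\Omega}$, which is merely a convex body with no a priori regularity.

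Concretely, two gaps. First, the claim that the identity \eqref{Ath} ``can be transferred to each $\Omega_\varepsilon$'' is false: the Steiner-type formula expresses the curvature measures of the outer parallel body as a fixed linear combination of \emph{all} the curvature measures $\Phi_0(\Omega,\cdot),\dots,\Phi_n(\Omega,\cdot)$ of $\Omega$ (with $\varepsilon$-dependent coefficients), while the hypothesis controls only $\Phi_k$ and $\Phi_n$; nothing forces $\sigma_{n-k}$ to be constant on $\partial\Omega_\varepsilon$. Weak continuity (Theorem \ref{s2.thmcurv}) also points the wrong way: it lets you pass information from $\Omega_\varepsilon$ to $\Omega$ in the limit, not from $\Omega$ to $\Omega_\varepsilon$ at positive $\varepsilon$. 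Second, even granting some weak formulation of $\sigma_{n-k}=c$ on a $C^{1,1}$ convex boundary, the proposed ``Evans--Krylov/Caffarelli bootstrap'' has no starting point: those theorems require a uniformly elliptic equation with $C^{1,1}$ (i.e.\ a priori $C^2$) solutions, whereas here one only has an identity between Borel measures, with no a priori second-order bounds and only degenerate ellipticity of $\sigma_{n-k}$ on the G\aa rding cone. Regularity of weak (curvature-measure) solutions of such prescribed curvature problems is a genuinely hard open-type issue in general, and in this theorem the smoothness of $\partial\Omega$ is part of the \emph{conclusion} (it is a ball), not something one may bootstrap beforehand. So the argument as proposed assumes the hardest part; to repair it you would have to either reproduce Kohlmann's measure-theoretic rigidity argument or supply an independent regularity theorem, neither of which is sketched.
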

Note that there is no smoothness assumption on the domain $\Omega$ in Theorem \ref{gAT}. When the boundary $\partial\Omega$ is smooth, the equation \eqref{Ath} means that the $(n-k)$th mean curvature $\sigma_{n-k}$ is a constant on $\partial\Omega$, and the conclusion of Theorem \ref{gAT} in this case reduces to the classical result of Montiel and Ros \cite{MS91}.

\section{Klein model and Projection}\label{subsec}
To investigate the flow \eqref{flow-VMCF} for convex hypersurfaces, it is convenient to  project it to the Euclidean space $\mathbb{R}^{n+1}$ and apply the Gauss map parametrization as in the work \cite[\S 5]{BenWei} by the first author and Andrews. We briefly review the argument here and refer the readers to \cite{BenWei} for details.

Let us denote by $\mathbb{R}^{1,n+1}$ the Minkowski spacetime, that is the vector space $\mathbb{R}^{n+2}$ endowed with the Minkowski spacetime metric $\langle\cdot,\cdot\rangle$ given by
\begin{equation*}
\langle X,X\rangle~=~-X_0^2+\sum_{i=1}^{n+1}X_i^2
\end{equation*}
for any vector $X=(X_0,X_1,\cdots,X_{n+1})\in\mathbb{R}^{n+2}$. The hyperbolic space $\mathbb{H}^{n+1}$ is then
\begin{equation*}
\mathbb{H}^{n+1}=\{X\in\mathbb{R}^{1,n+1}~|~\langle X,X\rangle=-1,\ X_0>0\}.
\end{equation*}

The Klein model parametrizes the hyperbolic space using the unit disc, which induces a projection from an embedding $X:M^n\rightarrow\mathbb{H}^{n+1}$ to an embedding $Y:M^n\rightarrow B_1(0)\subset\mathbb{R}^{n+1}$ by
\begin{equation}\label{s3.proj}
X~=~\frac{(1,Y)}{\sqrt{1-|Y|^2}}.
\end{equation}
Let $\nu\in T\mathbb{H}^{n+1}, g^X_{ij}, h^X_{ij}, K^X$ and $N\in\mathbb{R}^{n+1}, g^Y_{ij}, h^Y_{ij}, K^Y$ denote the unit normal vectors, induced metrics, the second fundamental forms and Gauss curvatures of $X(M^n)\subset\mathbb{H}^{n+1}$ and $Y(M^n)\subset \mathbb{R}^{n+1}$ respectively. We have the following relations:
\begin{align}
h^X_{ij}&=\frac{h^Y_{ij}}{\sqrt{\left(1-|Y|^2\right)\left(1-\langle N,Y\rangle^2\right)}},\label{eq-hij}\\
g^X_{ij}&=\frac{1}{1-|Y|^2}\left(g^Y_{ij}+\frac{\langle Y,\partial_i Y\rangle\langle Y,\partial_j Y\rangle}{1-|Y|^2}\right).\label{eq-metric}
\end{align}
It follows from \eqref{eq-hij} that $X(M^n)$ is convex in $\mathbb{H}^{n+1}$ is equivalent to that $Y(M^n)$ is convex in $\mathbb{R}^{n+1}$. If we take the determinant on both sides of \eqref{eq-hij} and \eqref{eq-metric}, the Gauss curvatures satisfy
\begin{equation}\label{s3.K}
  K^X=\left(\frac{1-|Y|^2}{1-\langle N,Y\rangle^2}\right)^{\frac{n+2}{2}}K^Y.
\end{equation}

Suppose that $X: M^n\times [0,T)\to \mathbb{H}^{n+1}$ is a smooth convex solution to the flow \eqref{flow-VMCF}. Then the corresponding solution $Y:M^n\times [0,T)\to B_1(0)\subset\mathbb{R}^{n+1}$ satisfies the evolution equation:
\begin{equation}\label{s3.evlY}
  \frac{\partial}{\partial t}Y=\sqrt{(1-|Y|^2)(1-\langle N,Y\rangle^2)}\Big(\phi(t)-(K^X)^{\alpha}\Big)N.
\end{equation}
Since each $Y_t(M^n)=Y(M^n,t)\subset B_1(0)\subset \mathbb{R}^{n+1}$ is convex, we can parametrize $Y_t(M^n)$ using the Gauss map and the support function $s(z):=\langle Y_t(N^{-1}(z)),z\rangle$, where $N^{-1}:\mathbb{S}^n\to M^n$ is the inverse map of the Gauss map. Then $Y_t(M^n)$ is given by the embedding $Y:\mathbb{S}^n\to \mathbb{R}^{n+1}$ with (see \cite[\S 2]{Ur90})
\begin{equation}\label{s3.Y-Gas}
  Y(z)=~s(z)z+\bar{\nabla} s
\end{equation}
where $\bar{\nabla}$ is the gradient with respect to the round metric $g_{\mathbb{S}^n}$  on $\mathbb{S}^n$. The derivative of this map is given by
\begin{equation*}
  \partial_iY=~\mathfrak{r}_{ik}\sigma^{kl}\partial_lz
\end{equation*}
in local coordinates, where $ \mathfrak{r}_{ij}$ is given as follows
\begin{equation}\label{s2.r-def}
  \mathfrak{r}_{ij}=~\bar{\nabla}_i\bar{\nabla}_js+s\sigma_{ij}.
\end{equation}
The eigenvalues $ \mathfrak{r}_i$ of $ \mathfrak{r}_{ij}$ with respect to the round metric $g_{\mathbb{S}^n}$ are the inverse of the principal curvatures $\kappa_i^Y$, i.e., $ \mathfrak{r}_i=1/{\kappa_i^Y}$, and are called the principal radii of curvature. The determinant of \eqref{s2.r-def} gives the Gauss curvature of $Y_t(M^n)$:
\begin{equation}\label{s3.Ky}
  K^Y=\frac 1{\mathrm{det}(\mathfrak{r}_{ij})}.
\end{equation}
By the identity \eqref{s3.K}, we obtain
\begin{equation}\label{eq-K}
K^X=\left(\frac{1-(s^2+|\bar{\nabla} s|^2)}{1-s^2}\right)^\frac{n+2}{2}K^Y.
\end{equation}
The evolution equation \eqref{s3.evlY} is equivalent to  the following scalar parabolic equation:
\begin{equation*}%\label{s5:Y-evl-Gau}
  \frac{\partial}{\partial t}s=~\sqrt{(1-s^2-|\bar{\nabla}s|^2)(1-s^2)}\Big(\phi(t)-(K^X)^{\alpha}\Big)
\end{equation*}
on the round sphere $\mathbb{S}^n$ for the support function $s(z,t)$ of $Y(M^n,t)$.

We summarize the results as the following lemma:
\begin{lem}
Assume that $X(M^n,t)\subset \mathbb{H}^{n+1}, t\in [0,T)$ is a smooth convex solution to the flow \eqref{flow-VMCF}. Then up to a tangential diffeomorphism,  the corresponding solution $Y(M^n,t)\subset B_1(0)\subset \mathbb{R}^{n+1}$ and its support function $s(z,t)$ satisfy the following equations:
	\begin{align}
		\frac{\partial}{\partial t}Y=&~\sqrt{(1-|Y|^2)(1-\langle N,Y\rangle^2)}\Big(\phi(t)-(K^X)^{\alpha}\Big)N,\label{projected flow eq}\\
		\frac{\partial}{\partial t}s=&~\sqrt{(1-s^2-|\bar{\nabla}s|^2)(1-s^2)}\Big(\phi(t)-(K^X)^{\alpha}\Big),\label{eq-sptf0}
	\end{align}
where the Gauss curvature $K^X$ is related to the Gauss curvature $K^Y$ via the identities \eqref{s3.K} and \eqref{eq-K}.
\end{lem}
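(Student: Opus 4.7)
The plan is to verify the two evolution equations separately, treating \eqref{projected flow eq} first and then reading off \eqref{eq-sptf0} from it via the Gauss-map parametrization.

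For \eqref{projected flow eq}, I would start from the flow equation $\partial_t X=(\phi(t)-K^\alpha)\nu$ in $\mathbb{H}^{n+1}$ and the Klein projection formula \eqref{s3.proj}. Differentiating $X=(1,Y)/\sqrt{1-|Y|^2}$ in $t$ expresses $\partial_t X$ as an explicit linear combination of $(1,\partial_t Y)$ and the radial term $(1,Y)$. Since the geometry of the evolving surfaces is unchanged by tangential reparametrizations, I may assume after a tangential diffeomorphism of $M^n$ that $\partial_t Y$ is normal to $Y(M^n)\subset\mathbb{R}^{n+1}$, i.e. $\partial_t Y=fN$ for some scalar $f$. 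The conversion factor between the hyperbolic normal speed and the Euclidean normal speed is then computed from \eqref{eq-metric}, which yields
\[
|\partial_t X|^2_{\mathbb{H}^{n+1}}\;=\;\frac{f^2}{(1-|Y|^2)(1-\langle N,Y\rangle^2)}.
\]
Equating $|\partial_t X|^2_{\mathbb{H}^{n+1}}=(\phi(t)-K^\alpha)^2$ and choosing the sign so that outward motion in $\mathbb{H}^{n+1}$ corresponds to outward motion in $\mathbb{R}^{n+1}$ gives $f=\sqrt{(1-|Y|^2)(1-\langle N,Y\rangle^2)}\,(\phi(t)-(K^X)^\alpha)$; this is exactly \eqref{projected flow eq} and matches the derivation in \cite{BenWei}.

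For \eqref{eq-sptf0}, I use the definition $s(z,t)=\langle Y(N^{-1}(z),t),z\rangle$ of the Euclidean support function. Differentiating in $t$, the term arising from $\partial_t N^{-1}$ lies tangentially in $T_{N^{-1}(z)}M^n$, and since $z=N$ is $\mathbb{R}^{n+1}$-orthogonal to this tangent space, that term drops out. Therefore $\partial_t s=\langle\partial_t Y,N\rangle$. Substituting \eqref{projected flow eq} gives
\[
\partial_t s\;=\;\sqrt{(1-|Y|^2)(1-\langle N,Y\rangle^2)}\,\bigl(\phi(t)-(K^X)^\alpha\bigr).
\]
Now using \eqref{s3.Y-Gas} together with $\bar\nabla s\perp z$ in $\mathbb{R}^{n+1}$, one has $|Y|^2=s^2+|\bar\nabla s|^2$ and $\langle N,Y\rangle=\langle z,sz+\bar\nabla s\rangle=s$, which converts the prefactor into $\sqrt{(1-s^2-|\bar\nabla s|^2)(1-s^2)}$ and yields \eqref{eq-sptf0}. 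The only nontrivial step is pinning down the Euclidean–hyperbolic speed ratio in the first part; once that factor is identified, the rest is algebra.
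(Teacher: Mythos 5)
Your overall strategy (first identify the conversion factor between the hyperbolic and Euclidean normal speeds, then pass to the support function via the Gauss map) is the same one the paper has in mind; note that the paper itself gives no proof here but defers the computation to \cite[\S 5]{BenWei}. Your second step is correct: $\partial_t s=\langle\partial_t Y,N\rangle$ because the term coming from the moving point $N^{-1}(z)$ is tangential, and $\langle N,Y\rangle=s$, $|Y|^2=s^2+|\bar\nabla s|^2$ follow from \eqref{s3.Y-Gas}.

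However, the first step contains a genuine gap, and it is precisely the only nontrivial content of the lemma. The relation \eqref{eq-metric} compares the \emph{induced} metrics, i.e.\ it only concerns vectors tangent to the hypersurface, so it cannot by itself produce the conversion factor for the normal direction; for that you need the ambient Klein metric $g_{\mathbb H}=\frac{1}{1-|y|^2}\bigl(\delta+\frac{y\otimes y}{1-|y|^2}\bigr)$. Moreover, the identity you write, $|\partial_t X|^2_{\mathbb H^{n+1}}=\frac{f^2}{(1-|Y|^2)(1-\langle N,Y\rangle^2)}$ together with $|\partial_t X|^2_{\mathbb H^{n+1}}=(\phi-K^\alpha)^2$, is inconsistent as stated: once you reparametrize so that $\partial_t Y=fN$, the vector $\partial_t X$ acquires a hyperbolically tangential component, so its hyperbolic length no longer equals the hyperbolic normal speed (and conversely, if $\partial_t X$ is purely normal then $\partial_t Y$ is not). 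Indeed, a direct computation in the Klein metric gives $|fN|^2_{\mathbb H}=\frac{f^2\left(1-|Y|^2+\langle N,Y\rangle^2\right)}{(1-|Y|^2)^2}$, which differs from $\frac{f^2}{(1-|Y|^2)(1-\langle N,Y\rangle^2)}$ unless $Y$ is parallel to $N$ or $\langle N,Y\rangle=0$. The correct argument compares \emph{normal components}, which are insensitive to tangential reparametrizations: in the Klein chart the hyperbolic unit normal is $\nu=\frac{(1-|Y|^2)\left(N-\langle Y,N\rangle Y\right)}{\sqrt{(1-|Y|^2)(1-\langle N,Y\rangle^2)}}$ (obtained by applying $g_{\mathbb H}^{-1}$ to the Euclidean conormal $N$ and normalizing), whence
\begin{equation*}
\phi(t)-(K^X)^\alpha=\langle\partial_t X,\nu\rangle_{\mathbb H}=\frac{\langle\partial_t Y,N\rangle}{\sqrt{(1-|Y|^2)(1-\langle N,Y\rangle^2)}},
\end{equation*}
which yields $f=\sqrt{(1-|Y|^2)(1-\langle N,Y\rangle^2)}\bigl(\phi(t)-(K^X)^\alpha\bigr)$, i.e.\ \eqref{projected flow eq}. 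This normal-component computation (equivalently, the support-function relation) is exactly what is carried out in \cite[\S 5]{BenWei}; with it in place, your derivation of \eqref{eq-sptf0} goes through unchanged.
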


For simplicity of the notation, we denote $A$ and $B$ as the following functions:
\begin{equation}\label{eqAB}
	\left\{\begin{aligned}
		A=A(s,\bar{\nabla} s)&:=\sqrt{(1-s^2-|\bar{\nabla} s|^2)(1-s^2)},\\
		B=B(s,\bar{\nabla} s)&:=(1-s^2-|\bar{\nabla} s|^2)^{\frac{n+2}{2}\alpha+\frac{1}{2}}(1-s^2)^{-\frac{n+2}{2}\alpha+\frac{1}{2}}.
	\end{aligned}\right.
\end{equation}
Then \eqref{eq-sptf0} can be simplified as:
\begin{equation}
 \frac{\partial}{\partial t}s=A\phi(t)-B(K^{Y})^{\alpha}.\label{eq-sptf}
\end{equation}

In \S \ref{sec4}, we will estimate the upper bound of the largest eigenvalues of $\mathfrak{r}_{ij}$ along the equation \eqref{eq-sptf} and this implies a positive lower bound on the principal curvatures of the solution $M_t$ to the original flow \eqref{flow-VMCF}. In the following lemma, we derive the evolution equation of $\mathfrak{r}_{ij}$ along the flow \eqref{eq-sptf}:
\begin{lem}\label{s3.lem2}
Let $s(z,t)$ be a smooth solution of the flow \eqref{eq-sptf}. Choose a local orthonormal frame $e_1,\cdots, e_n$ around the point $z\in \mathbb{S}^n$ such that $(\mathfrak{r}_{ij})$ is diagonal at $(z,t)$ and $\mathfrak{r}_{11}$ corresponds to the largest eigenvalue of $(\mathfrak{r}_{ij})$. Along the equation \eqref{eq-sptf}, we have the following evolution equation of $\mathfrak{r}_{11}$:
\begin{align}\label{s2.tau11}
	\frac{\partial}{\partial t}\mathfrak{r}_{11}-F^{k\ell}\bar{\nabla}_k\bar{\nabla}_\ell\mathfrak{r}_{11}=&\phi(t)\Big(A_{s_1s_1}{\mathfrak{r}_{11}}^2+2A_{ss_1}s_1\mathfrak{r}_{11}-2A_{s_1s_1}\mathfrak{r}_{11}s+A_s\mathfrak{r}_{11}+A_{s_k}\bar{\nabla}_k{\mathfrak{r}_{11}}\notag\\
	&+A+A_{ss}{s_1}^2-2A_{ss_1}ss_1-A_ss+A_{s_1s_1}s^2-A_{s_1}s_1\Big)\notag\\
&-(K^Y)^{\alpha}\Big(B_{s_1s_1}{\mathfrak{r}_{11}}^2+2B_{ss_1}s_1\mathfrak{r}_{11}-2B_{s_1s_1}\mathfrak{r}_{11}s+B_s\mathfrak{r}_{11}\notag\\
	&+B+B_{ss}{s_1}^2-2B_{ss_1}ss_1-B_ss+B_{s_1s_1}s^2-B_{s_1}s_1\notag\\
	&+B_{s_k}\bar{\nabla}_k{\mathfrak{r}_{11}}+\alpha B\mathfrak{r}^{kk}\mathfrak{r}^{\ell\ell}(\bar{\nabla}_1{\mathfrak{r}_{k\ell}})^2+{\alpha}^2 B(\sum_k\mathfrak{r}^{kk}\bar{\nabla}_1\mathfrak{r}_{kk})^2\notag\\
	&-2\alpha \mathfrak{r}^{kk}\bar{\nabla}_1\mathfrak{r}_{kk}(B_{s_1}\mathfrak{r}_{11}+B_ss_1-B_{s_1}s)+\alpha B\mathfrak{r}^{kk}(\mathfrak{r}_{11}-\mathfrak{r}_{kk})\Big),
\end{align}
where $F^{k\ell}=\alpha B(K^Y)^{\alpha}\mathfrak{r}^{k\ell}$.
The notations such as $A_s, A_{s_k}$ are partial derivatives of $A$ with respect to its first and second arguments.
\end{lem}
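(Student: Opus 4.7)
The plan is to start from the definition $\mathfrak{r}_{ij}=\bar\nabla_i\bar\nabla_j s+s\sigma_{ij}$ and differentiate in $t$, so that
\begin{equation*}
\frac{\partial}{\partial t}\mathfrak{r}_{ij}=\bar\nabla_i\bar\nabla_j\dot s+\dot s\,\sigma_{ij},
\end{equation*}
and then substitute the scalar flow equation $\dot s=A\phi(t)-B(K^Y)^\alpha$. Thus the computation splits naturally into two pieces, one proportional to $\phi(t)$ and one proportional to $(K^Y)^\alpha$, and I would carry them out in parallel.

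For the $\phi(t)$-piece I would simply chain-rule $\bar\nabla_i\bar\nabla_j A(s,\bar\nabla s)$. Writing $A_{s_k}$ and $A_{s_ks_\ell}$ for the partials of $A$ in its second argument and $A_s,A_{ss},A_{ss_k}$ for those in the first, a direct computation gives
\begin{equation*}
\bar\nabla_i\bar\nabla_j A=A_{s_ks_\ell}s_{ki}s_{\ell j}+A_{ss_k}(s_is_{kj}+s_js_{ki})+A_{ss}s_is_j+A_{s_k}s_{ijk}+A_s s_{ij},
\end{equation*}
and then $s_{ij}$ is replaced by $\mathfrak{r}_{ij}-s\sigma_{ij}$ and $s_{ijk}=\bar\nabla_k\mathfrak{r}_{ij}-s_k\sigma_{ij}$. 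Specialising to $i=j=1$ in the orthonormal frame that diagonalises $\mathfrak{r}_{ij}$ at the point collapses the mixed sums and produces exactly the block of terms that multiply $\phi(t)$ in \eqref{s2.tau11}, together with the extra $A\sigma_{11}=A$ coming from $\dot s\,\sigma_{ij}$.

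The $(K^Y)^\alpha$-piece is the delicate one, since $K^Y=1/\det(\mathfrak{r}_{ij})$ by \eqref{s3.Ky}, so differentiating $(K^Y)^\alpha$ produces the expressions
\begin{align*}
\bar\nabla_k(K^Y)^\alpha&=-\alpha(K^Y)^\alpha\mathfrak{r}^{pq}\bar\nabla_k\mathfrak{r}_{pq},\\
\bar\nabla_k\bar\nabla_\ell(K^Y)^\alpha&=-\alpha(K^Y)^\alpha\mathfrak{r}^{pq}\bar\nabla_k\bar\nabla_\ell\mathfrak{r}_{pq}+\alpha(K^Y)^\alpha\mathfrak{r}^{pa}\mathfrak{r}^{qb}\bar\nabla_k\mathfrak{r}_{ab}\bar\nabla_\ell\mathfrak{r}_{pq}\\
&\quad+\alpha^2(K^Y)^\alpha\bigl(\mathfrak{r}^{pq}\bar\nabla_k\mathfrak{r}_{pq}\bigr)\bigl(\mathfrak{r}^{ab}\bar\nabla_\ell\mathfrak{r}_{ab}\bigr).
\end{align*}
Combined with $\bar\nabla_i\bar\nabla_j B$ expanded by the same chain-rule recipe used for $A$, this gives $\bar\nabla_i\bar\nabla_j(B(K^Y)^\alpha)$. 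The crucial step, and the main technical obstacle, is now to recognise the Laplacian-type operator $F^{k\ell}\bar\nabla_k\bar\nabla_\ell=\alpha B(K^Y)^\alpha\mathfrak{r}^{k\ell}\bar\nabla_k\bar\nabla_\ell$ on the left-hand side: for this I would take $i=j=1$ in the diagonal frame and then use the Ricci identity on $\mathbb{S}^n$ (whose curvature tensor is $\bar R_{abcd}=\sigma_{ac}\sigma_{bd}-\sigma_{ad}\sigma_{bc}$) to commute the two covariant derivatives, giving
\begin{equation*}
\bar\nabla_1\bar\nabla_1\mathfrak{r}_{kk}=\bar\nabla_k\bar\nabla_k\mathfrak{r}_{11}+\mathfrak{r}_{11}-\mathfrak{r}_{kk}
\end{equation*}
at the point in question. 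Substituting this into the $-\alpha(K^Y)^\alpha\mathfrak{r}^{k\ell}\bar\nabla_1\bar\nabla_1\mathfrak{r}_{k\ell}$ contribution produces both the operator $F^{k\ell}\bar\nabla_k\bar\nabla_\ell\mathfrak{r}_{11}$ that is moved to the left and the final curvature term $\alpha B\,\mathfrak{r}^{kk}(\mathfrak{r}_{11}-\mathfrak{r}_{kk})$ on the right.

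Finally, grouping everything I would use the relation $s_{11}=\mathfrak{r}_{11}-s$ and $s_{11k}=\bar\nabla_k\mathfrak{r}_{11}-s_k$ wherever a second or third derivative of $s$ appears in the chain-rule expansions of $A$ and $B$; the off-diagonal second derivatives vanish at the point by the choice of frame, and the term $B\sigma_{11}=B$ from $\dot s\,\sigma_{ij}$ appears in the $(K^Y)^\alpha$-bracket. Collecting the resulting terms yields precisely \eqref{s2.tau11}. The only real bookkeeping hazard is keeping track of the signs and the curvature-commutator contributions when commuting $\bar\nabla_1\bar\nabla_1$ with $\bar\nabla_k\bar\nabla_\ell$ on the symmetric tensor $\mathfrak{r}_{ij}$; everything else is a mechanical application of the chain rule and the identity $\partial_{\mathfrak{r}_{ij}}\log\det\mathfrak{r}=\mathfrak{r}^{ij}$.
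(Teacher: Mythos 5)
Your route is in substance the same as the paper's: differentiate $\mathfrak{r}_{ij}=\bar\nabla_i\bar\nabla_j s+s\sigma_{ij}$ in time, substitute $\dot s=A\phi(t)-B(K^Y)^\alpha$, expand $\bar\nabla_1\bar\nabla_1A$ and $\bar\nabla_1\bar\nabla_1\big(B(K^Y)^\alpha\big)$ by the chain rule together with the formulas for $\bar\nabla(K^Y)^\alpha$ and $\bar\nabla\bar\nabla(K^Y)^\alpha$ (yours agree with \eqref{s2.dK}--\eqref{s2.d2K}), replace $s_{ij}$ by $\mathfrak{r}_{ij}-s\sigma_{ij}$ and $s_{ijk}$ by $\bar\nabla_k\mathfrak{r}_{ij}-s_k\sigma_{ij}$ as in \eqref{s2.d2A}, and finally trade $\mathfrak{r}^{k\ell}\bar\nabla_1\bar\nabla_1\mathfrak{r}_{k\ell}$ for the operator $F^{k\ell}\bar\nabla_k\bar\nabla_\ell\mathfrak{r}_{11}$ plus zeroth--order terms; the paper packages this last exchange as the Codazzi identity \eqref{s2.codz} and the Simons-type identity \eqref{s2.Sim}.

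There is, however, a sign error at exactly the step you single out as the main hazard. The correct commutation identity, obtained from \eqref{s2.Sim} with $i=j=1$, $k=\ell$, or directly from $\bar\nabla_1\bar\nabla_1\mathfrak{r}_{kk}=\bar\nabla_1\bar\nabla_k\mathfrak{r}_{1k}=\bar\nabla_k\bar\nabla_1\mathfrak{r}_{1k}+[\bar\nabla_1,\bar\nabla_k]\mathfrak{r}_{1k}$ with $[\bar\nabla_1,\bar\nabla_k]\mathfrak{r}_{1k}=\sigma_{11}\mathfrak{r}_{kk}-\sigma_{kk}\mathfrak{r}_{11}$ at the point (and Codazzi once more, $\bar\nabla_1\mathfrak{r}_{1k}=\bar\nabla_k\mathfrak{r}_{11}$), is
\begin{equation*}
\bar\nabla_1\bar\nabla_1\mathfrak{r}_{kk}=\bar\nabla_k\bar\nabla_k\mathfrak{r}_{11}+\mathfrak{r}_{kk}-\mathfrak{r}_{11},
\end{equation*}
not $+\mathfrak{r}_{11}-\mathfrak{r}_{kk}$ as you wrote. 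The relevant contribution in the expansion is $+\alpha B(K^Y)^\alpha\mathfrak{r}^{k\ell}\bar\nabla_1\bar\nabla_1\mathfrak{r}_{k\ell}$ (the term $-\alpha(K^Y)^\alpha\mathfrak{r}^{k\ell}\bar\nabla_1\bar\nabla_1\mathfrak{r}_{k\ell}$ sits inside $\bar\nabla_1\bar\nabla_1(K^Y)^\alpha$, which is multiplied by $-B$), so after moving $F^{k\ell}\bar\nabla_k\bar\nabla_\ell\mathfrak{r}_{11}$ to the left the correct identity leaves $+\alpha B(K^Y)^\alpha\mathfrak{r}^{kk}(\mathfrak{r}_{kk}-\mathfrak{r}_{11})=-(K^Y)^\alpha\,\alpha B\mathfrak{r}^{kk}(\mathfrak{r}_{11}-\mathfrak{r}_{kk})$, which is precisely the last term inside the bracket of \eqref{s2.tau11}; your version would produce this term with the opposite sign, so the computation as proposed does not reproduce the stated formula. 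The sign is not cosmetic: since $\mathfrak{r}_{11}$ is the largest eigenvalue, the correct sign makes this reaction term non-positive after multiplication by $-(K^Y)^\alpha/\mathfrak{r}_{11}$, which is why it can simply be discarded in passing from \eqref{s4.Q2-0} to \eqref{s4.Q2} in the proof of Proposition \ref{preserve convex}; with your sign it would be an uncontrolled positive term of order $\mathfrak{r}_{11}$ there. Everything else in your outline (the $\phi(t)$-block, the extra $A$ and $B$ coming from $\dot s\,\sigma_{11}$ and from the $\partial_t s$ term in $\partial_t\mathfrak{r}_{11}=\bar\nabla_1\bar\nabla_1\dot s+\dot s$, and the gradient and Hessian formulas for $(K^Y)^\alpha$) is correct and matches the paper's computation.
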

\proof
We first recall that $\mathfrak{r}_{ij}$ satisfies a Codazzi-type identity
\begin{equation}\label{s2.codz}
  \bar{\nabla}_k\mathfrak{r}_{ij}=\bar{\nabla}_i\mathfrak{r}_{kj}.
\end{equation}
This follows from the Ricci identity for commuting covariant derivatives on $\mathbb{S}^n$. Differentiating \eqref{s2.codz} and commuting derivatives, we have the Simons' identity for the second derivatives
\begin{equation}\label{s2.Sim}
  \bar{\nabla}_{(i}\bar{\nabla}_{j)}\mathfrak{r}_{k\ell}=\bar{\nabla}_{(k}\bar{\nabla}_{\ell)}\mathfrak{r}_{ij}+\mathfrak{r}_{k\ell}\sigma_{ij}-\mathfrak{r}_{ij}\sigma_{k\ell},
\end{equation}
where the brackets denote symmetrisation.  Let $(\mathfrak{r}^{ij})=(\mathfrak{r}_{ij})^{-1}$ be the inverse of $(\mathfrak{r}_{ij})$. The spatial derivatives of $K^Y$ satisfy
\begin{align}
  \bar{\nabla}_iK^Y= & -\frac{1}{(\mathrm{det} (\mathfrak{r}_{ij}))^2} \bar{\nabla}_i\mathrm{det} (\mathfrak{r}_{ij})\nonumber\\
  =& -\frac{1}{\mathrm{det} (\mathfrak{r}_{ij})}\mathfrak{r}^{k\ell}\bar{\nabla}_i\mathfrak{r}_{k\ell}\nonumber\\
  = & -K^Y \mathfrak{r}^{k\ell}\bar{\nabla}_i\mathfrak{r}_{k\ell},\label{s2.dK}
\end{align}
and
\begin{align}
\bar{\nabla}_j\bar{\nabla}_iK^Y=&-K^Y \mathfrak{r}^{k\ell}\bar{\nabla}_j\bar{\nabla}_i\mathfrak{r}_{k\ell}-\bar{\nabla}_jK^Y \mathfrak{r}^{k\ell}\bar{\nabla}_i\mathfrak{r}_{k\ell}-K^Y \bar{\nabla_j}\mathfrak{r}^{k\ell}\bar{\nabla}_i\mathfrak{r}_{k\ell}\nonumber\\
  =&-K^Y \mathfrak{r}^{k\ell}\bar{\nabla}_j\bar{\nabla}_i\mathfrak{r}_{k\ell}+K^Y\left(\mathfrak{r}^{k\ell}\bar{\nabla}_i\mathfrak{r}_{k\ell}\right)\left(\mathfrak{r}^{pq}\bar{\nabla}_j\mathfrak{r}_{pq}\right)\nonumber\\
  &\quad +K^Y\mathfrak{r}^{kq}\mathfrak{r}^{p\ell}\bar{\nabla}_j\mathfrak{r}_{pq}\bar{\nabla}_i\mathfrak{r}_{k\ell}. \label{s2.d2K}
\end{align}
On the other hand, the spatial derivatives of $A(s,\bar{\nabla}s)$  can be calculated as
\begin{align}
  \bar{\nabla}_iA= & A_ss_i+A_{s_k}s_{ki} \nonumber\\
  =&A_{s_k}\mathfrak{r}_{ki}+A_ss_i-sA_{s_k}\sigma_{ki},\label{s2.dA}
\end{align}
and
\begin{align}
   \bar{\nabla}_j\bar{\nabla}_iA =& A_ss_{ij}+A_{ss}s_is_j+A_{ss_\ell}s_{\ell j}s_i+A_{s_k s}s_{j}s_{ki}+A_{s_ks_\ell}s_{ki}s_{\ell j}+A_{s_k}s_{kij}\nonumber\\
   =& A_s(\mathfrak{r}_{ij}-s\sigma_{ij})+A_{ss}s_is_j+A_{ss_\ell}(\mathfrak{r}_{\ell j}-s\sigma_{\ell j})s_i\nonumber\\
   &+A_{s_k s}s_{j}(\mathfrak{r}_{ki}-s\sigma_{ki})+A_{s_ks_\ell}(\mathfrak{r}_{ki}-s\sigma_{ki})(\mathfrak{r}_{\ell j}-s\sigma_{\ell j})\nonumber\\
   &+A_{s_k}\left(\bar{\nabla}_j\mathfrak{r}_{ki}-s_j\sigma_{ki}\right)\nonumber\\
   =&A_{s_k}\bar{\nabla}_k\mathfrak{r}_{ij}+A_s\mathfrak{r}_{ij}+A_{ss_\ell}s_i\mathfrak{r}_{\ell j}+A_{s_k s}s_{j}\mathfrak{r}_{ki}+A_{s_ks_\ell}\mathfrak{r}_{ki}\mathfrak{r}_{\ell j}\nonumber\\
   &-A_{s_k s_l}\mathfrak{r}_{ki}\sigma_{\ell j}s-A_{s_k s_\ell}\mathfrak{r}_{\ell j}\sigma_{ki}s-A_ss\sigma_{ij}+A_{ss}s_is_j-A_{ss_{\ell}}\sigma_{\ell j}ss_i\nonumber\\
   &-A_{s_k s}\sigma_{ki}s_{j}s+A_{s_k s_\ell}\sigma_{ki}\sigma_{lj}s^2-A_{s_k}\sigma_{ki}s_j,\label{s2.d2A}
\end{align}
where $A_s, A_{s_k}$ are partial derivatives of $A$ with respect to its first and second arguments, and we used \eqref{s2.r-def} and the Codazzi-type identity \eqref{s2.codz}. The calculation for $B(s,\bar{\nabla}s)$ is similar.

Taking the spatial derivatives to the equation \eqref{eq-sptf} and using \eqref{s2.dK}$-$\eqref{s2.d2A}, we compute that
%\begin{align*}
%\bar{\nabla}_i(\partial_ts)
%=& (\phi(t)-f)\bar{\nabla}_iA-Af'\left(\bar{\nabla}_iBK+B\bar{\nabla}_iK\right)\nonumber\\
%=&(\phi(t)-f)(A_ss_i+A_{s_k}s_{ki})-Af'K(B_ss_i+B_{s_k}s_{ki}-B\mathfrak{r}^{k\ell}\mathfrak{r}_{k\ell i}),
%\end{align*}
\begin{align}\label{s2.d2S}
	\bar{\nabla}_1\bar{\nabla}_1(\partial_ts)
=&\phi(t)\bar{\nabla}_1\bar{\nabla}_1A-(K^{Y})^{\alpha}\bar{\nabla}_1\bar{\nabla}_1B-2\alpha (K^Y)^{\alpha-1}\bar{\nabla}_1K^Y\bar{\nabla}_1B\nonumber\\
&-\alpha (K^Y)^{\alpha-1}\bar{\nabla}_1\bar{\nabla}_1K^Y B-\alpha(\alpha-1)(K^Y)^{\alpha-2}(\bar{\nabla}_1K^Y)^2B\notag\\
=&\phi(t)\bar{\nabla}_1\bar{\nabla}_1A-(K^{Y})^{\alpha}\Big(\bar{\nabla}_1\bar{\nabla}_1B-2\alpha \mathfrak{r}^{k\ell}\bar{\nabla}_1\mathfrak{r}_{k\ell}\bar{\nabla}_1B\nonumber\\
&- \alpha B \mathfrak{r}^{k\ell}\bar{\nabla}_1\bar{\nabla}_1\mathfrak{r}_{k\ell}+\alpha^2 B \big(\sum_{k,\ell}\mathfrak{r}^{k\ell}\bar{\nabla}_1\mathfrak{r}_{k\ell}\big)^2+\alpha B\mathfrak{r}^{kq}\mathfrak{r}^{p\ell}\bar{\nabla}_1\mathfrak{r}_{pq}\bar{\nabla}_1\mathfrak{r}_{k\ell}\Big)\nonumber\\
	=&\phi(t)\Big(A_{s_k}\bar{\nabla}_k{\mathfrak{r}_{11}}+A_s\mathfrak{r}_{11}+2A_{ss_1}s_1\mathfrak{r}_{11}+A_{s_1s_1}{\mathfrak{r}_{11}^2}-2A_{s_1s_1}\mathfrak{r}_{11}s\nonumber\\
&-A_ss+A_{ss}{s_1}^2-2A_{ss_1}ss_1+A_{s_1s_1}s^2-A_{s_1}s_1\Big)\notag\\ &-(K^Y)^{\alpha}\Big(B_{s_k}\bar{\nabla}_k{\mathfrak{r}_{11}}+B_s\mathfrak{r}_{11}+2B_{ss_1}s_1\mathfrak{r}_{11}+B_{s_1s_1}{\mathfrak{r}_{11}^2}-2B_{s_1s_1}\mathfrak{r}_{11}s\nonumber\\
&-B_ss+B_{ss}{s_1}^2-2B_{ss_1}ss_1+B_{s_1s_1}s^2-B_{s_1}s_1\notag\\
	&-2\alpha \mathfrak{r}^{k\ell}\bar{\nabla}_1\mathfrak{r}_{k\ell}(B_{s_1}\mathfrak{r}_{11}+B_ss_1-B_{s_1}s)-\alpha B\mathfrak{r}^{k\ell}\bar{\nabla}_1\bar{\nabla}_1{\mathfrak{r}_{k\ell}}\notag\\
	&+{\alpha}^2 B(\sum_{k,\ell}\mathfrak{r}^{k\ell}\bar{\nabla}_1\mathfrak{r}_{k\ell})^2+\alpha B\mathfrak{r}^{ik}\mathfrak{r}^{j\ell}\bar{\nabla}_1{\mathfrak{r}_{ij}}\bar{\nabla}_1{\mathfrak{r}_{k\ell}}\Big).
\end{align}

Let $F^{k\ell}=\alpha B(K^Y)^{\alpha}\mathfrak{r}^{k\ell}$. Taking derivatives of \eqref{s2.r-def},
\begin{align}\label{s3.1}
\frac{\partial}{\partial t}\mathfrak{r}_{11}-F^{k\ell}\bar{\nabla}_k\bar{\nabla}_\ell\mathfrak{r}_{11}=&\bar{\nabla}_1\bar{\nabla}_1(\partial_ts)+\partial_ts-F^{k\ell}\bar{\nabla}_k\bar{\nabla}_\ell\mathfrak{r}_{11}.
\end{align}
Substituting  \eqref{eq-sptf}, \eqref{s2.Sim}, \eqref{s2.d2S} into \eqref{s3.1}, using the fact that $(\mathfrak{r}_{k\ell})$ is diagonal at the point we are considering and rearranging the terms, we obtain the equation \eqref{s2.tau11}.
\endproof

We also calculated the evolution equation of $r^2=|Y|^2=s^2+|\bar{\nabla}s|^2$.
\begin{lem}\label{s3.lem3}
Let $s(z,t)$ be a smooth solution of the flow \eqref{eq-sptf}. If we choose a local orthonormal frame $e_1,\cdots, e_n$ around the point $z\in \mathbb{S}^n$, then we have:
\begin{align}\label{s4.dr2}
		&\frac{\partial}{\partial t}r^2-F^{k\ell}\bar{\nabla}_k\bar{\nabla}_\ell r^2\nonumber\\
		=&2\phi(t)\Big(sA+A_s|\bar{\nabla} s|^2+A_{s_k}s_i\mathfrak{r}_{ki}-A_{s_k}s_ks\Big)\notag\\
		-&2(K^Y)^{\alpha}\Big((1-n\alpha)sB+B_s|\bar{\nabla} s|^2+B_{s_k}s_i\mathfrak{r}_{ki}-B_{s_k}s_ks+\alpha B\sum_{k}{\mathfrak{r}_{kk}}\Big),
\end{align}
where $F^{k\ell}=\alpha B(K^Y)^{\alpha}\mathfrak{r}^{k\ell}$.
\end{lem}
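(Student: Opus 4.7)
The plan is a direct parabolic computation that closely parallels the proof of Lemma~\ref{s3.lem2}, but for the scalar $r^2=s^2+|\bar{\nabla} s|^2$, which depends only on first derivatives of $s$. I would rely on exactly the same ingredients as before: the derivative formula $\bar{\nabla}_k A = A_{s_j}\mathfrak{r}_{jk}+A_s s_k - s A_{s_j}\sigma_{jk}$ and its $B$-analog (cf.~\eqref{s2.dA}), the logarithmic derivative $\mathfrak{r}^{m\ell}\bar{\nabla}_k\mathfrak{r}_{m\ell}=-\bar{\nabla}_k K^Y/K^Y$ coming from~\eqref{s2.dK}, the relation $s_{ij}=\mathfrak{r}_{ij}-s\sigma_{ij}$, and the Codazzi identity~\eqref{s2.codz}.

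A direct chain rule and~\eqref{eq-sptf} give
\begin{equation*}
\partial_t r^2 \;=\; 2s\,\partial_t s + 2s_k\bar{\nabla}_k(\partial_t s)
\;=\; 2\phi(t)\bigl(sA+s_k\bar{\nabla}_k A\bigr) - 2(K^Y)^{\alpha}\bigl(sB+s_k\bar{\nabla}_k B\bigr) - 2B\,s_k\bar{\nabla}_k(K^Y)^{\alpha},
\end{equation*}
and expanding $s_k\bar{\nabla}_k A$, $s_k\bar{\nabla}_k B$ with~\eqref{s2.dA} immediately produces the $A$-polynomial on the first line of~\eqref{s4.dr2} and the matching piece $B_s|\bar{\nabla} s|^2+B_{s_k}s_i\mathfrak{r}_{ki}-B_{s_k}s_ks$ on the second line. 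For the spatial term, $s_{k\ell}=\mathfrak{r}_{k\ell}-s\sigma_{k\ell}$ gives $\bar{\nabla}_\ell r^2=2ss_\ell+2s_k s_{k\ell}=2s_k\mathfrak{r}_{k\ell}$, so differentiating once more and contracting with $F^{m\ell}=\alpha B(K^Y)^\alpha\mathfrak{r}^{m\ell}$ yields
\begin{equation*}
F^{m\ell}\bar{\nabla}_m\bar{\nabla}_\ell r^2 \;=\; 2\alpha B(K^Y)^{\alpha}\bigl(\mathfrak{r}^{m\ell}s_{km}\mathfrak{r}_{k\ell} + s_k\mathfrak{r}^{m\ell}\bar{\nabla}_m\mathfrak{r}_{k\ell}\bigr).
\end{equation*}
The first summand collapses via $\mathfrak{r}^{m\ell}\mathfrak{r}_{k\ell}=\delta^m{}_k$ and $\sum_k s_{kk}=\sum_k\mathfrak{r}_{kk}-ns$ to $2\alpha B(K^Y)^{\alpha}\bigl(\sum_k\mathfrak{r}_{kk}-ns\bigr)$, which supplies both the $\alpha B\sum_k\mathfrak{r}_{kk}$ term of~\eqref{s4.dr2} and, after combining the $+2n\alpha sB(K^Y)^{\alpha}$ with the $-2sB(K^Y)^{\alpha}$ left over from $\partial_t r^2$, the coefficient $(1-n\alpha)sB$.

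The main (and only non-routine) step is the treatment of $s_k\mathfrak{r}^{m\ell}\bar{\nabla}_m\mathfrak{r}_{k\ell}$. Applying Codazzi~\eqref{s2.codz} to swap $\bar{\nabla}_m\mathfrak{r}_{k\ell}=\bar{\nabla}_k\mathfrak{r}_{m\ell}$ and then using~\eqref{s2.dK} rewrites the bracket as $-s_k\bar{\nabla}_k K^Y/K^Y$, so the contribution to $F^{m\ell}\bar{\nabla}_m\bar{\nabla}_\ell r^2$ is exactly $-2B s_k\bar{\nabla}_k(K^Y)^{\alpha}$, which cancels the last term of $\partial_t r^2$. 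After this cancellation the remaining terms reassemble without any further trick into~\eqref{s4.dr2}. Unlike the $\mathfrak{r}_{11}$-evolution in Lemma~\ref{s3.lem2}, no quadratic-in-$\bar{\nabla}\mathfrak{r}$ expressions survive and $K^Y$ is differentiated only once, which is why this identity is substantially cleaner.
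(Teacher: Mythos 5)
Your computation is correct and follows essentially the same route as the paper's proof: a direct chain-rule expansion of $\partial_t r^2$ via \eqref{eq-sptf}, \eqref{s2.dA} and \eqref{s2.dK}, together with the second spatial derivative of $r^2$ using $s_{ij}=\mathfrak{r}_{ij}-s\sigma_{ij}$ and the Codazzi identity, after which the gradient-of-$K^Y$ (equivalently $\mathfrak{r}^{k\ell}\bar{\nabla}\mathfrak{r}_{k\ell}$) terms cancel between the time and spatial parts. Your only deviation is the minor simplification $\bar{\nabla}_\ell r^2=2s_k\mathfrak{r}_{k\ell}$ before differentiating again, which is just a tidier bookkeeping of the same cancellation.
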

\proof
Since $r^2=s^2+|\bar{\nabla}s|^2$, using \eqref{eq-sptf}, \eqref{s2.dK} and \eqref{s2.dA}, we have
\begin{align*}
	\partial_tr^2=&2ss_t+2s_is_{it}\notag\\
=&2s(A\phi(t)-B(K^Y)^{\alpha})+2s_i\Big(\phi(t)\bar{\nabla}_iA-(K^Y)^{\alpha}\bar{\nabla}_iB+\alpha B(K^Y)^{\alpha}\mathfrak{r}^{k\ell}\bar{\nabla}_i\mathfrak{r}_{k\ell}\Big)\nonumber\\
	=&2\phi(t)\left(sA+A_s|\bar{\nabla} s|^2+A_{s_k}s_i\mathfrak{r}_{ki}-A_{s_k}s_i\sigma_{ki}s\right)\nonumber\\
	&-2(K^Y)^{\alpha}\left(sB+B_s|\bar{\nabla} s|^2+B_{s_k}s_i\mathfrak{r}_{ki}-B_{s_k}s_i\sigma_{ki}s-\alpha B\mathfrak{r}^{k\ell}\bar{\nabla}_i\mathfrak{r}_{k\ell}s_i\right),
\end{align*}
and
\begin{align*}
  \bar{\nabla}_k\bar{\nabla}_\ell r^2=&2 \bar{\nabla}_k\left(ss_\ell+s_is_{i\ell}\right)\nonumber\\
  =&2s_ks_\ell+2ss_{k\ell}+2s_{ik}s_{i\ell}+2s_is_{ik\ell}\nonumber\\
  =&2\mathfrak{r}_{ik}\mathfrak{r}_{i\ell}-2s\mathfrak{r}_{k\ell}+2s_i\bar{\nabla}_i\mathfrak{r}_{k\ell}.
\end{align*}
Combining the above two equations gives the equation \eqref{s4.dr2}.
\endproof

We conclude the section with following remarks.
\begin{rem} (i). The projection method via Klein model has also been used recently by Chen and Huang \cite{ChenHuang} to study the contracting Gauss curvature flows in the hyperbolic space.

(ii). Denote $\mathcal{W}^X$ the Weingarten matrix of $X(M^n)\subset \mathbb{H}^{n+1}$. Then \eqref{eq-hij} and \eqref{eq-metric} imply that the inverse matrix $\mathcal{W}_X^{-1}$ of $\mathcal{W}^X$ satisfies
\begin{align}\label{s5:W-inv}
  (\mathcal{W}_X^{-1})_{ij} =& (h^{-1}_X)^{jk}g_{ki}^X \nonumber\\
   =&(h^{-1}_Y)^{kj}\left(g_{ki}^Y+\frac{\langle  Y,\partial_iY\rangle\langle Y,\partial_kY\rangle }{(1-|Y|^2)}\right)\sqrt{\frac{1-\langle N,Y\rangle^2}{1-|Y|^2}}.
\end{align}
We can also express \eqref{s5:W-inv} using $s,\bar{\nabla}s$ and $\mathfrak{r}_{ij}$ as follows:
\begin{align}\label{s5:W-inv-2}
  (\mathcal{W}_X^{-1})_{ij}     =&\left(\sigma^{jq}+\frac{\langle \sigma^{ja}\bar{\nabla}_as,\sigma^{qb}\bar{\nabla}_bs\rangle}{1-s^2-|\bar{\nabla}s|^2}\right) \mathfrak{r}_{qi}\sqrt{\frac{1-s^2}{1-s^2-|\bar{\nabla}s|^2}}.
\end{align}
Since the eigenvalues of $(\mathcal{W}_X^{-1})_{ij}$ correspond to the reciprocal of the principal curvatures $\kappa_1,\cdots,\kappa_n$ of $X(M^n,t)\subset \mathbb{H}^{n+1}$, the above equation means that in order to estimate the lower bound of $\kappa_i$, it suffices to estimate the upper bound of $\mathfrak{r}_{ij}$ together with the $C^0, C^1$ estimates of $s$. This observation will be used in the next section. Moreover, we believe that \eqref{s5:W-inv-2} would also be useful for studying general fully nonlinear curvature flows in the hyperbolic space via the projection method described in this section.
\end{rem}

\section{A priori estimate}
In this section, we first review the $C^0$ and $C^1$ estimates of the solution $M_t$ to the flow \eqref{flow-VMCF}, which can be proved using a similar argument as in \cite{BenChenWei,BenWei}.  Then we derive a positive lower bound on the principal curvatures $\kappa_i$ of the solution $M_t$, and prove the upper bound of the Gauss curvature $K$. Both of them may depend on the time $t$. %They give the two-sided curvature estimates of the solution.

\subsection{$C^0$ and $C^1$ estimates}\label{sec3}
Let $M_0$ be a smooth, closed and convex hypersurface in the hyperbolic space. The flow \eqref{flow-VMCF} has a unique smooth solution $M_t$ for at least a short time and $M_t$ is convex on a possibly shorter time interval.  Without loss of generality, we suppose that $M_t$ a smooth convex solution to the flow \eqref{flow-VMCF} starting from $M_0$ on a maximum time interval $[0,T)$, where $T\leq \infty$.

Denote by $\Omega_t$ as the domain enclosed by $M_t$ such that $\partial\Omega_t=M_t$. As the velocity of the flow \eqref{flow-VMCF} only depends on the curvature which is invariant under reflection with respect to a totally geodesic hyperplane, we can argue as in \cite[\S 4]{BenChenWei} using the Alexandrov reflection method to show that the inner radius and outer radius of $\Omega_t$ are uniformly bounded.
\begin{lem}\label{in-out}
	Let $M_t$ be the smooth convex solution to the flow \eqref{flow-VMCF} on the time interval $t\in [0,T)$. Denote $\rho_-(t)$, $\rho_+(t)$ be the inner radius and outer radius of $\Omega_t$. Then there exist positive constants $c_1,\ c_2$ depending only on $n$, $\alpha$ and $M_0$ such that
	\begin{equation}\label{in-out-radius}
	0<c_1\leq\rho_-(t)\leq\rho_+(t)\leq c_2
	\end{equation}
	for all time $t\in[0,T)$.
\end{lem}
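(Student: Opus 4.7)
The strategy is to apply the Alexandrov reflection method in $\mathbb{H}^{n+1}$ to bound the outer radius, and then combine volume preservation with Blaschke selection to bound the inner radius.

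Observe first that the normal speed $\phi(t) - K^\alpha$ in \eqref{flow-VMCF} is the sum of a local term $K^\alpha$ invariant under any isometry of $\mathbb{H}^{n+1}$ and a spatially constant (but time-dependent) term $\phi(t)$. Consequently, for every totally geodesic hyperplane $P \subset \mathbb{H}^{n+1}$, the reflected family $R_P(M_t)$ satisfies the same flow equation (since $\phi(t)$ depends only on $t$, it is unchanged under the reflection). Fix a basepoint $p_0 \in \Omega_0$ and $R_1$ with $M_0 \subset \overline{B_{R_1}(p_0)}$. For each unit $v \in T_{p_0}\mathbb{H}^{n+1}$ consider the totally geodesic hyperplanes $P_{v,s}$ perpendicular to the geodesic from $p_0$ in direction $v$ at signed distance $s$, and let
\[
s_v^* \;=\; \inf\bigl\{\, s : R_{P_{v,s}}(\Omega_0 \cap P_{v,s}^+) \subseteq \Omega_0 \cap P_{v,s}^- \,\bigr\},
\]
where $P_{v,s}^+$ is the $v$-side half-space. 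Convexity of $\Omega_0$ implies $s_v^* \in [0, R_1]$ with continuous dependence on $v$. Because $M_t$ and $R_{P_{v,s}}(M_t)$ evolve by the same fully nonlinear parabolic equation, the strong maximum principle ensures that the Alexandrov reflection inclusion persists for all $t \in [0, T)$ whenever it holds at $t = 0$. The standard consequence is a uniform bound on the extent of $\Omega_t$ past $P_{v,s_v^*}$ in direction $v$, which yields
\[
\rho_+(t) \,\leq\, c_2
\]
for some constant $c_2 = c_2(n, \alpha, M_0)$; the argument is the one carried out in \cite[\S 4]{BenChenWei}.

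For the inner radius bound we argue by contradiction using Blaschke selection. Suppose $\rho_-(t_k) \to 0$ along some sequence $t_k \in [0, T)$. By Step 1, each $\Omega_{t_k}$ lies in the fixed compact set $\overline{B_{2R_1}(p_0)}$, so a subsequence converges in the Hausdorff metric to a nonempty compact convex set $\Omega_\infty$. Since the inradius is continuous under Hausdorff convergence of convex bodies, $\rho_-(\Omega_\infty) = 0$, so $\Omega_\infty$ has empty interior and lies in a proper totally geodesic subspace; in particular $|\Omega_\infty| = 0$. On the other hand, any sequence of convex bodies in a fixed compact region whose Hausdorff limit has empty interior must have volumes tending to zero, because the bodies eventually lie in arbitrarily thin tubular neighborhoods of that subspace. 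Hence
\[
|\Omega_0| \,=\, \lim_{k \to \infty} |\Omega_{t_k}| \,=\, 0,
\]
contradicting $|\Omega_0| > 0$. This yields $\rho_-(t) \geq c_1 > 0$ for some $c_1 = c_1(n, \alpha, M_0)$.

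The main technical difficulty is Step 1, namely verifying that the Alexandrov reflection property is preserved under the \emph{nonlocal} flow \eqref{flow-VMCF}. The crucial point is that $\phi(t)$ depends only on $t$ and not on spatial location, so the signed-distance difference between $M_t$ and $R_{P_{v,s}}(M_t)$ satisfies a purely local linear parabolic inequality in which the nonlocal term cancels, and the strong maximum principle rules out the first time at which the inclusion could degenerate. Once the outer radius is controlled, the inner radius estimate is a soft consequence of volume preservation and Blaschke selection.
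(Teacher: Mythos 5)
Your observation that the nonlocal term $\phi(t)$ is spatially constant, so that $M_t$ and its reflection $R_{P_{v,s}}(M_t)$ satisfy the same equation and the reflection inclusions of $\Omega_0$ persist under \eqref{flow-VMCF}, is exactly the paper's starting point (the paper then defers the details to \cite[\S 4]{BenChenWei}), and your inner-radius step (Blaschke selection, Hausdorff continuity of the volume, a convex limit with empty interior has zero volume) is a correct soft argument once the bodies are confined to a fixed compact set. The genuine gap is in Step 1: the preserved reflection inclusions by themselves do \emph{not} yield $\rho_+(t)\leq c_2$, and the ``standard consequence'' you invoke (a uniform bound on the extent of $\Omega_t$ past $P_{v,s_v^*}$) is false as stated. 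Indeed, a geodesic ball $B_D(p_0)$ of arbitrarily large radius $D$ centered at the base point satisfies every one of the inclusions $R_{P_{v,s}}\bigl(\Omega\cap P_{v,s}^+\bigr)\subseteq \Omega\cap P_{v,s}^-$ for all $v$ and all $s\geq R_1$, so no upper bound on the circumradius can follow from the reflection property alone. What reflection actually gives is that the extents of $\Omega_t$ in opposite directions from $p_0$ differ by at most $2R_1$ (hence the non-drift statement $\Omega_t\cap B_{R_1}(p_0)\neq\emptyset$); to bound $\rho_+$ one must couple this with the fixed enclosed volume $|\Omega_t|=|\Omega_0|$ (or with a monotone quermassintegral such as $\mathcal{A}_{n-1}$). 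This coupling is not a formality: in $\mathbb{H}^{n+1}$ long thin convex bodies (e.g.\ the convex hull of two unit balls whose centers are far apart) have bounded volume, so volume alone does not exclude them, and it is precisely the interplay of the reflection inclusions, convexity and the volume constraint that rules them out; that is the nontrivial content of the argument in \cite[\S 4]{BenChenWei} which you cite but do not reproduce. Your proposal, by assigning the volume constraint only to the inner-radius step, leaves the outer-radius step without the ingredient it needs, so as written it would fail.

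A related smaller point: even granting $\rho_+(t)\leq c_2$, this only says each $\Omega_{t_k}$ lies in \emph{some} ball of radius $c_2$, not in the fixed set $\overline{B_{2R_1}(p_0)}$ as you assert; to apply Blaschke selection you must first pin the bodies near $p_0$, which again requires the reflection-based estimate comparing extents in opposite directions (giving $\Omega_t\subset B_{c_2+2R_1}(p_0)$, say). This is fixable, but it shows that the reflection information is needed in both steps, not only in Step 1.
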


By \eqref{in-out-radius}, the inner radius of $\Omega_t$ is bounded below by a positive constant $c_1$. This implies that for each $t\in[0,T)$  there exists a geodesic ball $B_{c_1}(p_t)$ of radius $c_1$ centered at some point $p_t$ such that $B_{c_1}(p_t)\subset\Omega_t$. A similar argument as in \cite[Lemma 4.2]{BenWei} yields the existence of a geodesic ball with fixed center enclosed by the flow hypersurface on a suitable fixed time interval.
\begin{lem}\label{lem3.2}
	Let $M_t$ be the smooth convex solution to the flow \eqref{flow-VMCF} on the time interval $[0,T)$. For any $t_0\in[0,T)$, let $B_{\rho_0}(p_0)$ be the inball of $\Omega_{t_0}$, where $\rho_0=\rho_-(t_0)$. Then
	\begin{equation}\label{inball-t0}
	B_{\rho_0/2}(p_0)\subset\Omega_t,\ \ t\in[t_0,\min\{T,t_0+\tau\})
	\end{equation}
	for some $\tau$ depending only on $n$, $\alpha$ and $M_0$.
\end{lem}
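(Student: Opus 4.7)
The plan is to track the scalar function
\[ \rho(t) := \min_{x \in M_t} d_{\mathbb{H}^{n+1}}(X(x,t), p_0), \]
the distance from the fixed point $p_0$ to the evolving hypersurface. Since $B_{\rho_0}(p_0)$ is the inball of $\Omega_{t_0}$ we have $\rho(t_0)=\rho_0$, and the conclusion $B_{\rho_0/2}(p_0)\subset\Omega_t$ is equivalent to $\rho(t)\ge \rho_0/2$. The strategy is to derive a differential inequality for $\rho$ that is driven only by $\rho$ itself, and then integrate.

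The key geometric input is an inner-sphere curvature bound at a closest point $q_t\in M_t$ to $p_0$. At such a point $M_t$ is tangent from outside to the geodesic sphere $S_{\rho(t)}(p_0)$, so a standard second-order graph comparison in a normal chart based at $q_t$ (using that a geodesic sphere in $\mathbb{H}^{n+1}$ of radius $r$ has all principal curvatures equal to $\coth r$) gives
\[ \kappa_i(q_t)\le\coth\rho(t),\quad i=1,\ldots,n,\qquad \text{hence}\qquad K(q_t)\le\coth^n\rho(t). \]
Moreover, at $q_t$ the tangential gradient of $d(\cdot,p_0)$ on $M_t$ vanishes, so the ambient gradient $\partial_{\rho_{p_0}}$ coincides with the outer unit normal $\nu$. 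Applying Hamilton's trick to the minimum of the continuous family of smooth functions $x\mapsto d(X(x,t),p_0)$ on the compact manifold $M^n$, the flow equation \eqref{flow-VMCF} yields, in the a.e.\ sense,
\[ \frac{d\rho}{dt}(t)=\phi(t)-K^\alpha(q_t,t)\ge -\coth^{n\alpha}\rho(t), \]
where we used $\phi(t)>0$, since $\phi(t)$ is the average of the positive function $K^\alpha$ over $M_t$.

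Integrating this inequality on any subinterval on which $\rho(t)\ge\rho_0/2$, the right-hand side is bounded below by $-\coth^{n\alpha}(\rho_0/2)$, so
\[ \rho(t)\ge \rho_0-(t-t_0)\coth^{n\alpha}(\rho_0/2), \]
and a continuity/bootstrapping argument shows that $\rho(t)\ge \rho_0/2$ persists at least for $t-t_0\le \rho_0/(2\coth^{n\alpha}(\rho_0/2))$. Combining with Lemma \ref{in-out}, which gives a uniform lower bound $\rho_0=\rho_-(t_0)\ge c_1>0$ depending only on $n$, $\alpha$, $M_0$, we obtain the required time $\tau=c_1/(2\coth^{n\alpha}(c_1/2))$ depending only on $n$, $\alpha$, $M_0$. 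The main technical point is the non-smooth dependence of the closest point $q_t$ on $t$, which is handled by Hamilton's trick for the minimum of a continuous family of smooth functions on a compact set; the rest is a direct differential-inequality computation.
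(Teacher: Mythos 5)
Your proposal is correct and follows essentially the same route as the paper's proof (which is by reference to the argument of Lemma 4.2 in \cite{BenWei}): bound the speed at the point of $M_t$ closest to $p_0$ by comparing with the geodesic sphere $S_{\rho(t)}(p_0)$ touching $M_t$ from inside, so that $K^{\alpha}\le\coth^{n\alpha}\rho(t)$ there while $\phi(t)>0$, and then integrate the resulting differential inequality for the minimal distance, with $\tau$ controlled via the inradius bound $\rho_-(t_0)\ge c_1$ of Lemma \ref{in-out}.
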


Since the projection \eqref{s3.proj} is a diffeomorphism, by the equation \eqref{eq-hij} we know that if the flow \eqref{flow-VMCF} has a smooth convex solution on the maximal time interval $[0,T)$, then so does the projected flow \eqref{projected flow eq}.  Given $t_0\in[0,T)$ and let $B_{\rho_0}(p_0)$ be the inball of $\Omega_{t_0}$, where $\rho_0=\rho_-(t_0)$.

Consider the support function $u(x,t)=\sinh \rho_{p_0}\langle\partial_{\rho_{p_0}},\nu\rangle$ of $M_t$ with respect to the point $p_0$, where $\rho_{p_0}$ is the distance function in $\mathbb{H}^{n+1}$ from the point $p_0$. Since $M_t$ is convex, by \eqref{in-out-radius} and \eqref{inball-t0}, we see
\begin{equation}\label{u-bound}
	\begin{split}
u(x,t)&\geq\sinh\left(\frac{\rho_0}{2}\right)\geq\sinh\left(\frac{c_1}{2}\right)=:2c,\\
u(x,t)&\leq\sinh(2c_2),
    \end{split}
\end{equation}
and
\begin{equation}
0<\frac{c_1}{2}\leq\rho_{p_0}(t)\leq 2c_2<\infty
\end{equation}
for any $t\in[t_0,\min\{T,t_0+\tau\})$. Assume that $p_0$ is the origin of $\mathbb{H}^{n+1}$, we project the flow \eqref{flow-VMCF} in $\mathbb{H}^{n+1}$ onto $B_1(0)\subset \mathbb{R}^{n+1}$ with respect to the original point $p_0$. Then on the time interval $[t_0,\min\{T,t_0+\tau\})$, the Euclidean distance $r$ from the origin satisfies $r=\tanh{\rho_{p_0}}$ and is uniformly bounded from below and above. Under the Gauss map parametrization \eqref{s3.Y-Gas}, the support function $s$ of $Y_t(M^n)$ satisfies
\begin{equation*}
   r^2=s^2+|\bar{\nabla} s|^2.%,\quad  s=\frac{r^2}{r^2+|\bar{\nabla} r|^2}
\end{equation*}
We conclude with the following $C^0$ and $C^1$ estimates:
\begin{lem}\label{t0-bound on u,du}
	Let $M_t$ be the smooth convex solution of the flow \eqref{flow-VMCF} on the time interval $[0,T)$. Given $t_0\in[0,T)$ and $\tau$ be defined as in Lemma \ref{lem3.2}. Let $B_{\rho_0}(p_0)$ be the inball of $\Omega_{t_0}$ with $\rho_0=\rho_-(t_0)$ and we project $\mathbb{H}^{n+1}$ onto $B_1(0)\subset \mathbb{R}^{n+1}$ with respect to the original point $p_0$. We have the following estimates of the projected flow \eqref{projected flow eq}:
	\begin{equation}
	0<c_3\leq 1-r^2\leq 1-s^2\leq c_4<1,\qquad  |s|\leq c_5<1,\qquad |\bar{\nabla} s|\leq c_6<1
	\end{equation}
	for any $t\in[t_0,\min\{T,t_0+\tau\})$, where $\{c_i\}_{i=3,4,5,6}$ are positive constants depending only on $n$, $\alpha$ and $M_0$.
\end{lem}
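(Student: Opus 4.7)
The plan is to combine the inner/outer radius control from Lemma \ref{in-out} and the fixed-center containment from Lemma \ref{lem3.2} with the explicit Klein-model relation $r=\tanh\rho_{p_0}$. The whole statement is essentially a packaging of geodesic in- and outradius information into bounds on the projected support function; no analytic estimate is required.

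First I would establish uniform two-sided bounds on the hyperbolic distance to $p_0$, namely $c_1/2\leq\rho_{p_0}(x,t)\leq 2c_2$ for $x\in M_t$ and $t\in[t_0,\min\{T,t_0+\tau\})$. The lower bound follows from $B_{\rho_0/2}(p_0)\subset\Omega_t$ (Lemma \ref{lem3.2}) together with $\rho_0\geq c_1$ (Lemma \ref{in-out}); the upper bound follows from $p_0\in\Omega_t$ and the diameter estimate $\mathrm{diam}_{\mathbb{H}^{n+1}}(\Omega_t)\leq 2c_2$ implied by $\rho_+(t)\leq c_2$. The identity $r=|Y|=\tanh\rho_{p_0}$, which drops out of the parametrization $X=(1,Y)/\sqrt{1-|Y|^2}$ and $\cosh\rho_{p_0}=-\langle X,X_{p_0}\rangle$ in Minkowski space, then confines $r$ to the compact subinterval $[\tanh(c_1/2),\tanh(2c_2)]$ of $(0,1)$, yielding $1-r^2\geq c_3:=1-\tanh^2(2c_2)>0$.

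Next I translate these containments into bounds on the support function $s(z)=\langle Y_t(N^{-1}(z)),z\rangle$ of the projected body. Because geodesics through the origin of the Klein model are Euclidean straight segments, the hyperbolic inball $B_{\rho_0/2}(p_0)$ (with $p_0$ placed at $0\in\mathbb{R}^{n+1}$) projects to the Euclidean ball $B_{\tanh(\rho_0/2)}(0)$, which is contained in $Y_t(M^n)$. Hence the Euclidean support function is bounded below by $s\geq\tanh(c_1/2)$, producing $1-s^2\leq c_4:=1-\tanh^2(c_1/2)<1$. The identity $r^2=s^2+|\bar\nabla s|^2$ finishes everything in one stroke: $s^2\leq r^2$ gives the middle inequality $1-r^2\leq 1-s^2$, while $|s|\leq r\leq\tanh(2c_2)=:c_5<1$ and $|\bar\nabla s|^2=r^2-s^2\leq r^2\leq\tanh^2(2c_2)$ give the remaining bounds, with $c_6:=\tanh(2c_2)<1$.

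There is essentially no analytic difficulty here; the only subtle point is recognizing that placing the projection origin at the inball center $p_0$ makes the hyperbolic inball project to an honest Euclidean ball about $0$ (rather than a shifted or elliptical region), which is what produces the clean positive lower bound on $s$ and hence the upper bound on $1-s^2$.
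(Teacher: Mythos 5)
Your proposal is correct and follows essentially the same route as the paper: uniform bounds $c_1/2\leq\rho_{p_0}\leq 2c_2$ from Lemmas \ref{in-out} and \ref{lem3.2}, the Klein-model identity $r=\tanh\rho_{p_0}$, the projected inball giving the lower bound on the support function $s$, and the identity $r^2=s^2+|\bar{\nabla}s|^2$ to conclude. The only difference is that you spell out explicitly (via the projected Euclidean inball $B_{\tanh(\rho_0/2)}(0)$) the lower bound on $s$ that the paper leaves implicit in its discussion preceding the lemma.
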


%\section{Curvature estimates}

\subsection{Preserving convexity}\label{sec4}
In this subsection, we prove the lower bound on the principal curvatures of the solution $M_t$ of the flow \eqref{flow-VMCF}.
\begin{prop}\label{preserve convex}
Let $M_0$ be a smooth, closed and convex hypersurface in $\mathbb{H}^{n+1}$ and $M_t, t\in [0,T)$, be the smooth solution of the flow \eqref{flow-VMCF} starting from $M_0$. Then there exist constants $\Lambda_1$ and $\Lambda_3$ depending only on $n$, $\alpha$ and $M_0$ such that the principal curvatures $\kappa_i$ of $M_t$ satisfy
\begin{equation}\label{s4.2-0}
  \kappa_i\geq \Lambda_3^{-1}\Lambda_1^{-\frac{2t}{\tau}}
\end{equation}
 for all $i\in\{1,\dots,n\}$ and $t\in[0,T)$, where $\tau$ is the constant in Lemma \ref{lem3.2}.
\end{prop}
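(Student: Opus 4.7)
The strategy is to use the Klein model projection of \S\ref{subsec} together with the observation in the remark after Lemma \ref{s3.lem3}: by the identity \eqref{s5:W-inv-2} and the $C^{0}, C^{1}$ estimates of the support function $s$ from Lemma \ref{t0-bound on u,du}, a positive lower bound on the hyperbolic principal curvatures $\kappa_{i}$ is equivalent to a uniform upper bound on the eigenvalues of the matrix $\mathfrak{r}_{ij}$. Since $\mathfrak{r}_{11}$ denotes the largest eigenvalue, the problem reduces to establishing an upper estimate on $\mathfrak{r}_{11}$ along the projected scalar flow \eqref{eq-sptf}.

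I will work on successive short time intervals. Fix $t_{0}\in[0,T)$, let $p_{0}$ be the center of the inball of $\Omega_{t_{0}}$, and project $\mathbb{H}^{n+1}$ into $B_{1}(0)\subset\mathbb{R}^{n+1}$ centered at $p_{0}$. By Lemma \ref{lem3.2} and Lemma \ref{t0-bound on u,du}, uniform $C^{0}$ and $C^{1}$ bounds on $s$ hold on $[t_{0},\min\{T,t_{0}+\tau\})$ with $\tau$ independent of $t_{0}$. On this interval I consider an auxiliary function
\begin{equation*}
W(z,t)=\log\mathfrak{r}_{11}(z,t)+\gamma\,\Phi\bigl(s,|\bar{\nabla}s|^{2}\bigr),
\end{equation*}
where $\gamma>0$ is a small parameter and $\Phi$ is a bounded auxiliary function (a natural candidate is $\Phi=s^{2}+|\bar{\nabla}s|^{2}$ or $\Phi=-\log(1-s^{2}-|\bar{\nabla}s|^{2})$) chosen so that, after combining \eqref{s2.tau11} and \eqref{s4.dr2}, the first-order drift terms $A_{s_{k}}\bar{\nabla}_{k}\mathfrak{r}_{11}$ and $B_{s_{k}}\bar{\nabla}_{k}\mathfrak{r}_{11}$ can be eliminated using the critical-point relation $\bar{\nabla}W=0$. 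At an interior spatial maximum of $W$, substituting this relation and the evolution equations produces a pointwise differential inequality $\tfrac{d}{dt}W_{\max}\leq C$ with $C$ depending only on $n,\alpha,M_{0}$.

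The main obstacle is controlling the global term $\phi(t)$, which need not be uniformly bounded when only convexity is assumed on the initial data. The key observation is that a direct differentiation of $A=\sqrt{(1-s^{2}-|\bar{\nabla}s|^{2})(1-s^{2})}$ gives $A_{s_{1}s_{1}}<0$, so the dangerous coefficient $\phi(t)\,A_{s_{1}s_{1}}\mathfrak{r}_{11}^{2}$ in \eqref{s2.tau11} is in fact \emph{negative}. For sufficiently large $\mathfrak{r}_{11}$ this term dominates all remaining $\phi(t)$-dependent contributions, which are at most linear in $\mathfrak{r}_{11}$, and so the entire $\phi(t)$-block in \eqref{s2.tau11} is harmless regardless of the size of $\phi(t)$. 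On the $(K^{Y})^{\alpha}$-side, the quadratic gradient term $\alpha B\,\mathfrak{r}^{kk}\mathfrak{r}^{\ell\ell}(\bar{\nabla}_{1}\mathfrak{r}_{k\ell})^{2}\geq 0$ is of favorable sign and, together with the concavity-type square $\alpha^{2}B\bigl(\sum_{k}\mathfrak{r}^{kk}\bar{\nabla}_{1}\mathfrak{r}_{kk}\bigr)^{2}$, absorbs the mixed terms produced by $\bar{\nabla}W=0$; the remaining non-good contributions are controlled using $\mathfrak{r}_{kk}\leq\mathfrak{r}_{11}$ and the relation $(K^{Y})^{\alpha}\mathfrak{r}_{11}=\mathfrak{r}_{11}^{1-\alpha}/\bigl(\prod_{i\geq 2}\mathfrak{r}_{ii}\bigr)^{\alpha}$, with $\gamma$ adjusted so that the contribution of $\gamma\,\Phi$ dominates them.

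Having obtained $\tfrac{d}{dt}W_{\max}\leq C$, integration yields $\max\mathfrak{r}_{11}(\cdot,t_{0}+\tau)\leq\Lambda_{1}\,\max\mathfrak{r}_{11}(\cdot,t_{0})$ for some uniform $\Lambda_{1}$. Iterating on the consecutive intervals $[k\tau,(k+1)\tau)$, each time recentering the projection at the inball center of $\Omega_{k\tau}$, produces $\sup_{M^{n}}\mathfrak{r}_{11}(\cdot,t)\leq C\,\Lambda_{1}^{t/\tau}$ on $[0,T)$. Converting back through \eqref{s5:W-inv-2}, combined with the $C^{0}$ and $C^{1}$ bounds on $s$ which turn the prefactor $\sqrt{(1-s^{2})/(1-s^{2}-|\bar{\nabla}s|^{2})}$ into a bounded quantity, gives the stated lower bound $\kappa_{i}\geq\Lambda_{3}^{-1}\Lambda_{1}^{-2t/\tau}$ for all $i\in\{1,\dots,n\}$ and all $t\in[0,T)$.
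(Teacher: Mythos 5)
Your proposal follows essentially the same route as the paper's proof: project via the Klein model centered at the inball center of $\Omega_{t_0}$, apply the maximum principle on $[t_0,\min\{T,t_0+\tau\})$ to $\log\mathfrak{r}_{11}$ plus a weighted zeroth-order quantity (the paper uses exactly $G=\log\mathfrak{r}_{11}+\tfrac{L}{2}r^2$ with $r^2=s^2+|\bar{\nabla}s|^2$, i.e.\ your first candidate for $\Phi$), use $A_{s_1s_1}<0$ to render the whole $\phi(t)$-block in \eqref{s2.tau11} harmless for large $\mathfrak{r}_{11}$, use the weighted term from \eqref{s4.dr2} to control the $(K^Y)^{\alpha}$-block, iterate over time windows of length comparable to $\tau$, and convert back through \eqref{s5:W-inv-2}. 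So the architecture is correct and matches the paper.

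Two points in your key step need correction. First, the weight must be chosen \emph{large}, not small: after substituting the critical-point relation and applying Cauchy--Schwarz, the $(K^Y)^{\alpha}$-bracket still contains positive quadratic contributions, namely $B_{s_1s_1}\mathfrak{r}_{11}^2$ (whose sign is not favorable in general) and the residual $\tfrac{1}{B}\left(B_{s_1}\mathfrak{r}_{11}+B_ss_1-B_{s_1}s\right)^2$, whose coefficients are fixed by the $C^0,C^1$ bounds and are independent of your $\gamma$; the only negative quadratic term available is $\gamma$ times $\alpha B\sum_k\mathfrak{r}_{kk}\geq\alpha B\,\mathfrak{r}_{11}$ coming from the evolution of $\Phi$, so $\gamma$ must be taken large enough to beat those fixed coefficients (the paper takes $L=2b_4/b_5$). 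With $\gamma$ small, as you first state, this step fails; your later phrase ``$\gamma$ adjusted so that the contribution of $\gamma\Phi$ dominates'' is the right requirement but contradicts ``small''. Also, the identity $(K^Y)^{\alpha}\mathfrak{r}_{11}=\mathfrak{r}_{11}^{1-\alpha}/\bigl(\prod_{i\geq2}\mathfrak{r}_{ii}\bigr)^{\alpha}$ does not by itself control anything here, since no lower bound on the other $\mathfrak{r}_{ii}$ (i.e.\ upper bound on the curvatures) is available at this stage. Second, the outcome at the maximum point is not a uniform inequality $\tfrac{d}{dt}W_{\max}\leq C$: both brackets carry the factors $\phi(t)$ and $(K^Y)^{\alpha}$, which are not yet known to be bounded (the upper bound on $K$ is proved afterwards and uses this very proposition), so no uniform $C$ exists when $\mathfrak{r}_{11}$ is of moderate size. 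What the sign analysis actually yields is a dichotomy: at the spatial maximum either $\mathfrak{r}_{11}\leq\Lambda$ for a constant $\Lambda=\Lambda(n,\alpha,M_0)$, or both brackets are nonpositive and $W$ cannot increase. This still gives $\max\mathfrak{r}_{11}(\cdot,t)\leq\Lambda_1\max\{\Lambda,\max\mathfrak{r}_{11}(\cdot,t_0)\}$ on each window, and your iteration with recentering then produces the exponential-in-$t$ bound and hence $\kappa_i\geq\Lambda_3^{-1}\Lambda_1^{-2t/\tau}$, exactly as in the paper.
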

\proof
Since $M_0$ is convex, by continuity the solution $M_t$ is convex for at least a short time. Without loss of generality, we assume that the solution is convex on the time interval $[0,T)$ and aim to derive the estimate \eqref{s4.2-0} on this interval $[0,T)$. In fact, if $T_1<T$ is the largest time such that $M_t$ is convex for all $0\leq t<T_1$, then the estimate \eqref{s4.2-0} implies that the principal curvatures of $M_{T_1}$ has a positive lower bound and this contradicts with the maximality of $T_1$. Therefore, $M_t$ remains convex for all time $t\in [0,T)$ and satisfies the estimate \eqref{s4.2-0}.

So in the following, we will derive the estimate \eqref{s4.2-0} on the interval $[0,T)$ where the solution $M_t$ is convex.  Given any fixed time $t_0\in[0,T)$ and $\tau$ be defined as in Lemma \ref{lem3.2}.  Let $B_{\rho_0}(p_0)$ be the inball of $\Omega_{t_0}$, where $\rho_0=\rho_-(t_0)$. We consider the solution $Y_t(M^n)$ of the projected flow \eqref{projected flow eq} on $\mathbb{R}^{n+1}$ with respect to the original point $p_0$. Let $\mathfrak{r}_{ij}=s_{ij}+s\sigma_{ij}$ be the inverse of the Weingarten matrix of the projected hypersurface $Y_t$ with respect to an orthonormal frame on $\mathbb{S}^n$, whose eigenvalues $(\mathfrak{r}_1,\dots,\mathfrak{r}_n)$ are the principal radii of curvature of $Y_t$. As the principal curvatures $\kappa_i$ of $M_t=X(M^n,t)$ are the eigenvalues of the Weingarten matrix $\mathcal{W}_X$, by the expression \eqref{s5:W-inv-2} and the $C^0, C^1$ estimates in Lemma \ref{t0-bound on u,du}, in order to prove the lower bound of $\kappa_i$, it suffices to prove the upper bound of $\mathfrak{r}(z,t)=\max_{i=1,\dots,n}{\mathfrak{r}_i(z,t)}$.

For this purpose, we consider the function
\begin{equation*}
 {G}(z,t):=\log{\mathfrak{r}}+\frac{L}{2}r^2,\qquad t\in [t_0,\min\{T,t_0+\tau\}),
\end{equation*}
 where $L>0$ is a large constant to be determined. Suppose that the maximum of ${G}$ on $\mathbb{S}^n\times [t_0,\min\{T,t_0+\tau\})$ is attained at $(\bar{z},\bar{t})$. We choose a local orthonormal frame $e_1,\dots,e_n$ around $\bar{z}$ such $\{\mathfrak{r}_{ij}(z,t)\}$ is diagonal at $(\bar{z},\bar{t})$ and $\mathfrak{r}_{11}(\bar{z},\bar{t})=\mathfrak{r}_1(\bar{z},\bar{t})=\mathfrak{r}(\bar{z},\bar{t})$. Then without loss of generality, we can view the function $G$ as the following form
\begin{equation}\label{eqG}
	G(z,t)=\log{\mathfrak{r}_{11}(z,t)}+\frac{L}{2}r^2.	
\end{equation}
If $\bar{t}=t_0$, we have
 \begin{equation}\label{s4.G1}
   G(z,t)\leq \max_{z\in\mathbb{S}^n}G(z,t_0)~\leq \log\max_{z\in\mathbb{S}^n}\mathfrak{r}_{11}(z,t_0)+\frac{L}{2}(1-c_3)
 \end{equation}
for $(z,t)\in \mathbb{S}^n\times [t_0,\min\{T,t_0+\tau\})$. In the following, we assume $\bar{t}>t_0$.  We shall apply the maximum principle to the evolution equation of $G$ to derive the upper bound of $\mathfrak{r}_{11}$.

We have calculated the evolution equations of $\mathfrak{r}_{11}$ and of $r^2$ in Lemma \ref{s3.lem2} and Lemma \ref{s3.lem3}. Since $(\bar{z},\bar{t})$ is a maximum point of $G$, at $(\bar{z},\bar{t})$ there hold
\begin{align}\label{s4.dQ}
	0=\bar{\nabla}_iG=&\frac{\bar{\nabla}_i\mathfrak{r}_{11}}{\mathfrak{r}_{11}}+L(ss_i+s_ks_{ki}) =\frac{\bar{\nabla}_i\mathfrak{r}_{11}}{\mathfrak{r}_{11}}+Ls_k\mathfrak{r}_{ki}% W_{11i}=-LW_{11}W_{ii}s_i,
\end{align}
and
\begin{align}\label{s4.dtQ}
		0&\leq \partial_tG-F^{k\ell}\bar{\nabla}_k\bar{\nabla}_\ell G\nonumber\\
		&=\frac{1}{\mathfrak{r}_{11}}(\partial_t \mathfrak{r}_{11}-F^{k\ell}\bar{\nabla}_k\bar{\nabla}_\ell \mathfrak{r}_{11})+\frac{L}{2}(\partial_tr^2-F^{k\ell}\bar{\nabla}_k\bar{\nabla}_\ell r^2)+F^{k\ell}\frac{\bar{\nabla}_k\mathfrak{r}_{11}\bar{\nabla}_\ell \mathfrak{r}_{11}}{(\mathfrak{r}_{11})^2}\nonumber\\
		&=:Q_1+Q_2,
\end{align}
where $Q_1$ involves the term $\phi(t)$:
\begin{align}\label{s4.Q1-0} Q_1=&\frac{\phi(t)}{\mathfrak{r}_{11}}\Big(A_{s_1s_1}{\mathfrak{r}_{11}}^2+2A_{ss_1}s_1\mathfrak{r}_{11}-2A_{s_1s_1}\mathfrak{r}_{11}s+A_s\mathfrak{r}_{11}+A_{s_k}\bar{\nabla}_k{\mathfrak{r}_{11}}\notag\\
	&+A+A_{ss}{s_1}^2-2A_{ss_1}ss_1-A_ss+A_{s_1s_1}s^2-A_{s_1}s_1\Big)\notag\\
&+\phi(t)L\Big(sA+A_s|\bar{\nabla} s|^2+A_{s_k}s_i\mathfrak{r}_{ki}-A_{s_k}s_i\sigma_{ki}s\Big),
\end{align}
and $Q_2$ involves the term $(K^Y)^{\alpha}$:
\begin{align}\label{s4.Q2-0}
	Q_2=&-\frac{(K^Y)^{\alpha}}{{\mathfrak{r}}_{11}}\Big(B_{s_1s_1}{\mathfrak{r}_{11}}^2+2B_{ss_1}s_1\mathfrak{r}_{11}-2B_{s_1s_1}\mathfrak{r}_{11}s+B_s\mathfrak{r}_{11}\notag\\
	&+B+B_{ss}{s_1}^2-2B_{ss_1}ss_1-B_ss+B_{s_1s_1}s^2-B_{s_1}s_1\notag\\
	&+B_{s_k}\bar{\nabla}_k{\mathfrak{r}_{11}}+\alpha B\mathfrak{r}^{kk}\mathfrak{r}^{\ell\ell}(\bar{\nabla}_1{\mathfrak{r}_{k\ell}})^2+{\alpha}^2 B(\sum_k\mathfrak{r}^{kk}\bar{\nabla}_1\mathfrak{r}_{kk})^2\notag\\
	&-2\alpha \mathfrak{r}^{kk}\bar{\nabla}_1\mathfrak{r}_{kk}(B_{s_1}\mathfrak{r}_{11}+B_ss_1-B_{s_1}s)+\alpha B\mathfrak{r}^{kk}(\mathfrak{r}_{11}-\mathfrak{r}_{kk})\Big)\nonumber\\
&-(K^Y)^{\alpha}L\Big((1-n\alpha)sB+B_s|\bar{\nabla} s|^2+B_{s_k}s_i\mathfrak{r}_{ki}-B_{s_i}s_is+\alpha B\sum_{k}{\mathfrak{r}_{kk}}\Big)\nonumber\\
&+F^{k\ell}\frac{\bar{\nabla}_k\mathfrak{r}_{11}\bar{\nabla}_\ell \mathfrak{r}_{11}}{(\mathfrak{r}_{11})^2}.
\end{align}

In the following, we estimate the terms $Q_1$ and $Q_2$ separately. First, we use \eqref{s4.dQ} to cancel the term on $\bar{\nabla}_k{\mathfrak{r}_{11}}$ in \eqref{s4.Q1-0} and get
\begin{align}\label{s4.Q1}
Q_1=&\frac{\phi(t)}{\mathfrak{r}_{11}}\Big(A_{s_1s_1}{\mathfrak{r}_{11}}^2+2A_{ss_1}s_1\mathfrak{r}_{11}-2A_{s_1s_1}\mathfrak{r}_{11}s+A_s\mathfrak{r}_{11}\notag\\
	&+A+A_{ss}{s_1}^2-2A_{ss_1}ss_1-A_ss+A_{s_1s_1}s^2-A_{s_1}s_1\Big)\notag\\
&+\phi(t)L\Big(sA+A_s|\bar{\nabla} s|^2-A_{s_i}s_is\Big).
\end{align}
Since
\begin{equation*}
	A_{s_1s_1}=-\frac{(1-s^2)^{\frac{1}{2}}}{(1-{s}^2-|\bar{\nabla}{s}|^2)^{\frac{3}{2}}}\left(1-({s}^2+|\bar{\nabla}{s}|^2)+s_1^2\right)<0
\end{equation*}
and the coefficients such as $A, A_s, A_{ss_1}, \cdots$ depend only on $s$ and $\bar{\nabla}s$ which are uniformly bounded on the interval $[t_0, \min\{T,t_0+\tau\})$, we obtain the estimate:
\begin{align}\label{s4.Q1'}
Q_1	\leq&~ \frac{\phi(t)}{{\mathfrak{r}}_{11}}\left(a_1+(a_2+a_3L){\mathfrak{r}}_{11}-a_4 {\mathfrak{r}}_{11}^2\right),
\end{align}
where $a_i>0,i=1,2,3,4$ depend only on $n,\alpha$ and $M_0$.

To estimate $Q_2$, we use \eqref{s4.dQ} to kill the terms involving $\bar{\nabla}_k\mathfrak{r}_{11}$ and get
\begin{align*}
&  -\frac{(K^Y)^{\alpha}}{{\mathfrak{r}}_{11}} \alpha B\mathfrak{r}^{kk}\mathfrak{r}^{\ell\ell}(\bar{\nabla}_1{\mathfrak{r}_{k\ell}})^2 +F^{k\ell}\frac{\bar{\nabla}_k\mathfrak{r}_{11}\bar{\nabla}_\ell \mathfrak{r}_{11}}{(\mathfrak{r}_{11})^2} \\
   \leq & -\frac{(K^Y)^{\alpha}}{{\mathfrak{r}}_{11}} \alpha B\mathfrak{r}^{kk}\mathfrak{r}^{11}(\bar{\nabla}_k{\mathfrak{r}_{11}})^2 +\alpha B (K^Y)^\alpha \mathfrak{r}^{kk}\frac{(\bar{\nabla}_k\mathfrak{r}_{11})^2}{(\mathfrak{r}_{11})^2}=0.
\end{align*}
Then
\begin{align}\label{s4.Q2}
	Q_2\leq &-\frac{(K^Y)^{\alpha}}{{\mathfrak{r}}_{11}}\Big(B_{s_1s_1}{\mathfrak{r}_{11}}^2+\left(2B_{ss_1}s_1-2B_{s_1s_1}s+B_s\right)\mathfrak{r}_{11}\notag\\
	&+B+B_{ss}{s_1}^2-2B_{ss_1}ss_1-B_ss+B_{s_1s_1}s^2-B_{s_1}s_1\notag\\
	&+{\alpha}^2 B(\sum_k\mathfrak{r}^{kk}\bar{\nabla}_1\mathfrak{r}_{kk})^2-2\alpha \mathfrak{r}^{kk}\bar{\nabla}_1\mathfrak{r}_{kk}(B_{s_1}\mathfrak{r}_{11}+B_ss_1-B_{s_1}s)\Big)\nonumber\\
&-(K^Y)^{\alpha}L\Big((1-n\alpha)sB+B_s|\bar{\nabla} s|^2-B_{s_i}s_is+\alpha B\mathfrak{r}_{11}\Big).
\end{align}
The third line on the right hand side of \eqref{s4.Q2} can be estimated using Cauchy-Schwarz inequality:
\begin{align*}
 &- {\alpha}^2 B(\sum_k\mathfrak{r}^{kk}\bar{\nabla}_1\mathfrak{r}_{kk})^2+2\alpha \mathfrak{r}^{kk}\bar{\nabla}_1\mathfrak{r}_{kk}(B_{s_1}\mathfrak{r}_{11}+B_ss_1-B_{s_1}s)  \\
  \leq  & \frac{1}{B}\left(B_{s_1}\mathfrak{r}_{11}+B_ss_1-B_{s_1}s\right)^2.
\end{align*}
Since the coefficients such as $B, B_{s_i}, B_s, B_{ss_i}$ are bounded and $B>0$, we obtain the estimate
\begin{align}\label{s4.Q2'}
	Q_2	\leq&\frac{(K^Y)^{\alpha}}{{\mathfrak{r}}_{11}}\bigg(b_1+(b_2+b_3L){\mathfrak{r}}_{11}+(b_4-b_5L){\mathfrak{r}}_{11}^2\bigg),
\end{align}
where $b_i>0,i=1,2,3,4,5$ depend only on $n,\alpha$ and $M_0$.

Combining the two estimates \eqref{s4.Q1'} and \eqref{s4.Q2'} and choosing $L={2b_4}/{b_5}$, we can get an uniform upper bound ${\mathfrak{r}}_{11}(\bar{z},\bar{t})\leq \Lambda$, where $\Lambda$ depends only on $n$, $\alpha$ and $M_0$. It follows from
\begin{align*}%\label{G_max}
	G(z,t)&\leq \max\left\{G(\bar{z},\bar{t}),\max_{z\in\mathbb{S}^n}G(z,t_0)\right\}
\end{align*}
that
\begin{align}\label{tau1}
  \mathfrak{r}_{11}(z,t)&\leq \max\left\{\mathfrak{r}_{11}(\bar z,\bar t),\max_{z\in\mathbb{S}^n}\mathfrak{r}_{11}(z,t_0)\right\}\exp\Big(\frac{L}{2}\big(1-c_3-r^2(z,t)\big)\Big)\notag\\
  &\leq \max\left\{\Lambda,\max_{z\in\mathbb{S}^n}\mathfrak{r}_{11}(z,t_0)\right\}\exp\Big(\frac{L}{2}(c_4-c_3)\Big)\notag\\
  &=\Lambda_1\max\left\{\Lambda,\max_{z\in\mathbb{S}^n}\mathfrak{r}_{11}(z,t_0)\right\}
\end{align}
for all $(z,t)\in \mathbb{S}^n\times [t_0, \min\{T,t_0+\tau\})$, where $\Lambda_1=\exp\big(\frac{L}{2}(c_4-c_3)\big)\geq 1$ and $c_3,c_4$ are constants in Lemma \ref{t0-bound on u,du}.

Note that $t_0\in [t_0-\frac{\tau}{2},\min\{T,t_0+\frac{\tau}{2}\})$. Applying the above argument for the time interval $[t_0-\frac{\tau}{2},\min\{T,t_0+\frac{\tau}{2}\})$ gives
\begin{equation}\label{tau2}
 \max_{z\in\mathbb{S}^n}\mathfrak{r}_{11}(z,t_0)\leq \Lambda_1\max\left\{\Lambda,\max_{z\in\mathbb{S}^n}\mathfrak{r}_{11}(z,t_0-\frac{\tau}{2})\right\}.
\end{equation}
Combining \eqref{tau1}, \eqref{tau2} and the fact that $\Lambda_1\geq 1$, we have
\begin{equation*}
	\mathfrak{r}_{11}(z,t)\leq \Lambda_1^2\max\left\{\Lambda,\max_{z\in\mathbb{S}^n}\mathfrak{r}_{11}(z,t_0-\frac{\tau}{2})\right\}.
\end{equation*}
By backward induction on the time interval, we finally get
\begin{align}\label{tauup}
	\mathfrak{r}_{11}(z,t)&\leq \Lambda_1^{\left[\frac{2t_0}{\tau}\right]+2}\max\{\Lambda,\max_{z\in\mathbb{S}^n}\mathfrak{r}_{11}(z,0)\}\notag\\
	&\leq \Lambda_1^{\frac{2t}{\tau}+2}\max\{\Lambda,\max_{z\in\mathbb{S}^n}\mathfrak{r}_{11}(z,0)\}~=:\Lambda_2\Lambda_1^{\frac{2t}{\tau}}
\end{align}
for all $(z,t)\in \mathbb{S}^n\times [t_0, \min\{T,t_0+\tau\})$, where $[\cdot]$ denotes the integer part of a real constant,  and $\Lambda_1$, $\Lambda_2=\Lambda_1^{2}\max\{\Lambda,\max_{z\in\mathbb{S}^n}\mathfrak{r}_{11}(z,0)\}$ are constants depending only on $n$, $\alpha$ and $M_0$. Since $t_0$ is arbitrary, by \eqref{s5:W-inv-2} and the $C^0,C^1$ estimates, we conclude that the principal curvatures $\kappa_i$ of the solution $M_t$ of the flow \eqref{flow-VMCF} satisfy
\begin{equation*}
	\kappa_i\geq \Lambda_3^{-1}\Lambda_1^{-\frac{2t}{\tau}},\qquad i=1,\dots,n,
\end{equation*}
for all time $t\in [0,T)$, where $\Lambda_3=C\Lambda_2$ for a constant $C$ which estimates the bound on the coefficient of \eqref{s5:W-inv-2} involving $s$ and $\bar{\nabla}s$.
\endproof

%\begin{rem}
%Proposition \ref{preserve convex} implies that the solution $M_t$ of the flow \eqref{flow-VMCF} exists for all time $t>0$, but note that the estimate \eqref{s4.2-0} depends on the time and may degenerate as $t\to\infty$ in the case that the solution exists for all time $t\in [0,\infty)$.
%\end{rem}
\subsection{Upper bound of Gauss curvature} \label{sec.upK}
Now we use the technique of Tso \cite{Tso85} to prove the upper bound of the Gauss curvature of the solution $M_t$ along the flow \eqref{flow-VMCF}.
\begin{prop}\label{propKupp}
	Let $M_t,\ t\in[0,T)$ be the smooth solution of the flow \eqref{flow-VMCF} starting from a smooth closed convex hypersurface $M_0$. If $T<\infty$, then there is a constant $C$ depending on $n, \alpha, M_0$ and $T$ such that the Gauss curvature $K$ of $M_t$  satisfies
\begin{equation*}
	\max_{M_t} K\leq C
\end{equation*}
for any $t\in [0,T)$.
\end{prop}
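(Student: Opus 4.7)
The strategy is Tso's technique applied to the auxiliary function
\begin{equation*}
W = \frac{K^\alpha}{u - c},
\end{equation*}
where $u = \langle \sinh\rho_{p_0}\partial_{\rho_{p_0}},\nu\rangle$ is the hyperbolic support function of $M_t$ with respect to a suitable center $p_0$ and $c > 0$ is chosen so that $u - c$ stays uniformly bounded below. Fixing $t_0 \in [0, T)$, I would take $p_0$ to be the center of the inball of $\Omega_{t_0}$ and work on the interval $[t_0, \min(T, t_0 + \tau))$, on which $u \geq 2c$ by Lemma \ref{lem3.2} together with \eqref{u-bound}.

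At an interior spacetime maximum $(x_*, t_*)$ of $W$ with $t_* > t_0$, the first-order condition $\bar\nabla W = 0$ yields $\nabla K^\alpha = W\nabla u$, which makes the cross terms in $\dot K^{ij}\nabla_i\nabla_j W$ cancel. Combining the evolution equations \eqref{eq-KK} and \eqref{equeq}, and using $\dot K^{ij} = K\,(h^{-1})^{ij}$ to obtain $\dot K^{ij} h_i^k h_{kj} = KH$ and $\dot K^{ij} \delta_{ij} = \sigma_{n-1}$, one finds at this maximum
\begin{align*}
0 \;\leq\; \partial_t W - \alpha K^{\alpha-1}\dot K^{ij}\nabla_i\nabla_j W
&= \frac{\alpha K^{\alpha-1}(K^\alpha - \phi)(KH - \sigma_{n-1})}{u - c}\\
&\quad + \frac{K^\alpha\cosh\rho_{p_0}\bigl((n\alpha+1)K^\alpha - \phi\bigr)}{(u-c)^2} - \frac{\alpha K^{2\alpha} H u}{(u-c)^2}.
\end{align*}
The decisive observation is that the two contributions of the form $K^{2\alpha}H$ combine into the strictly negative quantity $-\alpha c\, K^{2\alpha} H/(u-c)^2$, and the AM-GM inequality $H \geq n K^{1/n}$ shows that this term has magnitude $\gtrsim K^{2\alpha + 1/n}$, which dominates every remaining term in $K$.

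To close the argument it suffices to absorb the positive contributions. For the nonlocal term, the elementary bound $\phi(t) \leq \max_{M_t} K^\alpha \leq W_{\max}(t)\,(u_{\max} - c)$ gives $\phi \leq C\,W$ at the maximum point, using the $C^0$-bound on $u$. For the $\sigma_{n-1}$ contributions, I would write $\sigma_{n-1} = K\sum_i \kappa_i^{-1}$ and invoke the positive lower bound $\kappa_i \geq \Lambda_3^{-1}\Lambda_1^{-2t/\tau}$ from Proposition \ref{preserve convex}, which on the finite interval $[0, T)$ yields $\sigma_{n-1} \leq C(T) K$. Substituting the identity $K^\alpha = W(u-c)$ valid at the maximum, every positive contribution is bounded above by $C(T)\,W^2$, while the negative term is at least $c_0\, W^{2 + 1/(n\alpha)}$. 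The resulting differential inequality
\begin{equation*}
0 \leq C(T)\, W^2 - c_0\, W^{2 + 1/(n\alpha)}
\end{equation*}
forces $W(x_*,t_*) \leq C'(T)$, and combined with the initial data on each interval this gives $K^\alpha \leq (u_{\max} - c)\,C'(T)$ on $[t_0, \min(T, t_0+\tau))$. Iterating over finitely many intervals of length $\tau$ then covers $[0, T)$.

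The main obstacle is the nonlocal term $\phi(t)$; it is handled by the observation that $\phi$ is already controlled pointwise by the maximum of $K^\alpha$, and hence by $W$ itself at its maximum. A secondary subtlety is that Proposition \ref{preserve convex} delivers only a time-dependent lower bound on $\kappa_i$ that decays exponentially in $t$, which forces the resulting bound on $K$ to depend on $T$, consistent with the statement of the proposition.
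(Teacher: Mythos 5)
Your proposal is correct and follows essentially the same route as the paper: Tso's technique with the same test function $W=K^{\alpha}/(u-c)$ centered at the inball center of $\Omega_{t_0}$ on intervals of length $\tau$, the same treatment of the nonlocal term via $\phi(t)\leq\max_{M_t}K^{\alpha}\leq(u_{\max}-c)\,\tilde W$, the same use of the time-dependent lower curvature bound to control $\sigma_{n-1}\leq C(T)K$, and the same dominant negative term $-\alpha cHW^2\lesssim -W^{2+\frac{1}{n\alpha}}$ from $H\geq nK^{1/n}$. The only cosmetic difference is that you conclude by a pointwise maximum-principle inequality and iterate the (finitely many) intervals, whereas the paper integrates an ODE for $\tilde W(t)$ and uses its smoothing effect to avoid compounding constants; for $T<\infty$ both yield the stated bound.
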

\proof
For any given $t_0\in[0,T)$, let $B_{\rho_0}(p_0)$ be the inball of $\Omega_{t_0}$, where $\rho_0=\rho_{-}(t_0)$. Consider the support function $u(x,t)=\sinh \rho_{p_0}(x)\langle{\partial_{\rho_{p_0}},\nu}\rangle$ of $M_t$ with respect to point $p_0$, where $\rho_{p_0}(x)$ is the distance function in $\mathbb{H}^{n+1}$ from the point $p_0$. Since $M_t$ is convex for all $t\in [0,T)$, by \eqref{u-bound} we have
\begin{equation}\label{equbound}
	2c\leq u\leq \sinh(2c_2)
\end{equation}
on $M_t$ for any $t\in \left[t_0,\min\{T,t_0+\tau\}\right)$. We define the auxiliary function
\begin{equation*}
	W=\frac{K^\alpha}{u-c},
\end{equation*}
which is well-defined for time $t\in \left[t_0,\min\{T,t_0+\tau\}\right)$. We shall apply the maximum principle to the evolution equation of $W$ to derive the upper bound of $K$.

Combining \eqref{eq-KK} and \eqref{equeq}, we compute that along the flow \eqref{flow-VMCF}  $W$ evolves as
\begin{align}
		\frac{\partial}{\partial t}{W}
		=&\alpha K^{\alpha-1}\dot{K}^{ij}(W_{ij}+\frac{2}{u-c}u_iW_j)\nonumber\\
&\quad -\frac{\phi(t)}{u-c}\bigg(\alpha K^{\alpha-1}(HK-\sigma_{n-1}(\kappa))+W\cosh\rho_{{p_0}}(x)\bigg)\notag\\
		&\quad +(1+n\alpha)\frac{K^{2\alpha}}{(u-c)^2}\cosh\rho_{{p_0}}(x)-\frac{c\alpha K^{2\alpha}}{(u-c)^2}H-\alpha WK^{\alpha-1}\sigma_{n-1}(\kappa)\notag\\
		\leq&~\alpha K^{\alpha-1}\dot{K}^{ij}(W_{ij}+\frac{2}{u-c}u_iW_j)+\underbrace{\frac{\phi(t)}{u-c}\alpha K^{\alpha-1}\left(\sigma_{n-1}(\kappa)-HK\right)}_{(I)}\notag\\
		&\qquad +(1+n\alpha)W^2\cosh\rho_{{p_0}}(x)-\alpha cHW^2.\label{eqW}
\end{align}
In the previous work \cite{BenWei,BenChenWei}, $M_t$ is assumed to be either $h$-convex ($\kappa_i>1, ~i=1,\cdots,n$) or positively curved ($\kappa_i\kappa_j>1$,~$\forall~i\neq j$), so the terms $(I)$ involving $\phi(t)$ can be thrown away when we estimate the upper bound of $W$. But in our case we only have convexity, so we still need to estimate the terms $(I)$ in \eqref{eqW} carefully.

Let $\tilde{W}(t)=\max_{M_t}W(x,t)$. Noting that $K^\alpha=(u-c)W$, by the definition \eqref{eqphi} of $\phi(t)$ and the upper bound \eqref{equbound} of $u$, we have
\begin{equation*}
	\phi(t)=\frac{1}{|M_t|}\int_{M_t}K^{\alpha}\,\mathrm{d}t\leq \max_{M_t}{K^{\alpha}(\cdot,t)}\leq (\sinh(2c_2)-c)\tilde{W}.
\end{equation*}
By the lower bound on the principal curvatures in Lemma \ref{preserve convex}, we also have
\begin{align*}
	\sigma_{n-1}(\kappa)&=K(\frac{1}{\kappa_1}+\cdots\frac{1}{\kappa_n})\leq nK(\min_{1\leq i\leq n} \kappa_i)^{-1}\leq nK\Lambda_3\Lambda_1^{{2T}/{\tau}}.
\end{align*}
It follows that the terms $(I)$ can be estimated as
\begin{equation}\label{s4.2-1}
	(I)\leq ~n\alpha (\sinh(2c_2)-c)\Lambda_3\Lambda_1^{{2T}/{\tau}}\tilde{W}^2~=:~\beta_1(T)\tilde{W}^2,
\end{equation}
where for simplicity of the notations we denote $\beta_1(T)=n\alpha (\sinh(2c_2)-c)\Lambda_3\Lambda_1^{{2T}/{\tau}}$, which depends on the maximal existence time $T<\infty$.

The first term on the last line of \eqref{eqW} can be simply estimated from above by
\begin{equation}\label{s4.2-2}
  (1+n\alpha)W^2\cosh\rho_{{p_0}}(x)~\leq (1+n\alpha)\cosh(2c_2)\tilde{W}^2.
\end{equation}
The last term of \eqref{eqW} provides sufficient negative term, since by $H\geq nK^{1/n}$ there holds:
\begin{align}\label{s4.2-3}
  -\alpha cHW^2\leq  & -\alpha nc K^{1/n}W^2\nonumber\\
  = & -\alpha nc(u-c)^{\frac{1}{n\alpha}}W^{2+\frac{1}{n\alpha}}\nonumber\\
  \leq &-\alpha nc^{1+\frac{1}{n\alpha}}W^{2+\frac{1}{n\alpha}}.
\end{align}
Combining \eqref{s4.2-1} - \eqref{s4.2-3}, we arrive at
\begin{align}\label{s4.2-4}
  \frac{d}{dt}\tilde{W} \leq & \bigg(\beta_1(T)+(1+n\alpha)\cosh(2c_2)-\alpha nc^{1+\frac{1}{n\alpha}}\tilde{W}^{\frac{1}{n\alpha}}\biggr)\tilde{W}^2.
\end{align}

The coefficient of the hightest order term on the right hand side of \eqref{s4.2-4} is negative, so the comparison principle implies that $\tilde{W}$ is bounded above by a positive constant. In fact, whenever
\begin{equation*}%\label{eWle}
\tilde{W}\geq \bigg(\frac{2}{\alpha n}\left(\beta_1(T)+(1+n\alpha)\cosh(2c_2)\right)\bigg)^{n\alpha}c^{-n\alpha-1}~=:\beta_2(T),
\end{equation*}
we have
\begin{align*}%\label{s4.2-4}
  \frac{d}{dt}\tilde{W} \leq & -\frac{\alpha n}{2}c^{1+\frac{1}{n\alpha}}\tilde{W}^{2+\frac{1}{n\alpha}}.
\end{align*}
Therefore,
\begin{equation}\label{s4.2-5}
  \tilde{W}(t)\leq \max\left\{\bigg(W^{-1-\frac{1}{n\alpha}}(t_0)+\frac{n\alpha+1}{2}c^{1+\frac{1}{n\alpha}}(t-t_0)\bigg)^{-\frac{n\alpha}{n\alpha+1}},\beta_2(T)\right\}
\end{equation}
for all time $t\in \left[t_0,\min\{T,t_0+\tau\}\right)$.

For $t_0=0$, we obtain from \eqref{s4.2-5} the upper bound
\begin{equation*}
	\tilde{W}(t)\leq\max\left\{\tilde{W}(0),\beta_2(T)\right\},\quad \forall t\in[0,\min\{\tau,T\})
\end{equation*}
and so
\begin{equation}\label{eqKalpha}
	K^{\alpha}\leq \sinh(2c_2)\max\left\{\tilde{W}(0),\beta_2(T)\right\},\quad \forall t\in[0,\min\{\tau,T\}).
\end{equation}
Next, for $t_0=\tau/2$, the estimate \eqref{s4.2-5} implies
\begin{align*}%\label{s4.tdW2}
  \tilde{W}(t)\leq &\max\left\{(\frac{n\alpha+1}{2}(t-t_0))^{-\frac{n\alpha}{n\alpha+1}}c^{-1},\beta_2(T)\right\}\nonumber\\
  \leq&~\max\left\{(\frac{(n\alpha+1)\tau}{4})^{-\frac{n\alpha}{n\alpha+1}}c^{-1},\beta_2(T)\right\}
\end{align*}
for $t\in [\tau, \min\{3\tau/2,T\})$, and so
\begin{equation}\label{s4.tdW3}
K^\alpha\leq \sinh(2c_2)\max\left\{(\frac{(n\alpha+1)\tau}{4})^{-\frac{n\alpha}{n\alpha+1}}c^{-1},\beta_2(T)\right\}
\end{equation}
for $t\in [\tau, \min\{3\tau/2,T\})$. Repeating the above argument for $t_0=m\tau/2$ ($m\geq 2$), we can get the estimate \eqref{s4.tdW3} for $t\in [\frac{(m+1)\tau}2, \min\{\frac{(m+2)\tau}2,T\})$, which covers the whole time interval $[0,T)$. Combining \eqref{eqKalpha} and \eqref{s4.tdW3}, we complete the proof of Proposition \ref{propKupp}.
\endproof

\section{Long time existence}\label{sec5}
In this section, we prove the long time existence of the flow \eqref{flow-VMCF}.
\begin{thm}\label{long}
	Let $M_0$ be a smooth closed convex hypersurface in $\mathbb{H}^{n+1}$ and $M_t$ be the smooth solution of the flow \eqref{flow-VMCF} starting from $M_0$. Then $M_t$ remains convex and exists for all time $t\in[0,\infty)$.
\end{thm}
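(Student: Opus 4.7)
The proof proceeds by a standard contradiction argument built on the a priori estimates established in the previous sections. Suppose for contradiction that the maximal time of existence of a smooth convex solution is $T<\infty$.

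The first step is to upgrade the lower curvature bound from Proposition \ref{preserve convex} and the Gauss curvature upper bound from Proposition \ref{propKupp} into full two-sided bounds on all principal curvatures on $[0,T)$. Since $T<\infty$, Proposition \ref{preserve convex} yields $\kappa_i\geq c_0:=\Lambda_3^{-1}\Lambda_1^{-2T/\tau}>0$ uniformly on $[0,T)$, while Proposition \ref{propKupp} yields $K\leq C_0$. Combining these, for each index $j$,
\begin{equation*}
\kappa_j=\frac{K}{\prod_{i\neq j}\kappa_i}\leq \frac{C_0}{c_0^{\,n-1}},
\end{equation*}
so all principal curvatures lie in a fixed compact subset of $(0,\infty)$ throughout $[0,T)$.

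The second step is to promote these $C^{2}$-bounds, together with the $C^{0}$ and $C^{1}$ estimates of Lemma \ref{t0-bound on u,du}, to uniform higher-order estimates. Viewing the flow in the Klein projection, the support function $s$ satisfies the scalar fully nonlinear equation \eqref{eq-sptf}, with the matrix $\mathfrak{r}_{ij}$ pinched between two positive-definite multiples of $\sigma_{ij}$; hence the equation is uniformly parabolic. The Krylov--Safonov Hölder estimates and Evans--Krylov type theorem then furnish uniform $C^{2,\beta}$-bounds on $s$ on $[0,T)$, after which parabolic Schauder theory bootstraps these to uniform $C^{k,\beta}$-bounds for every $k\geq 2$. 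With these in hand, Arzelà--Ascoli extracts a smooth convex limit hypersurface $M_T$ as $t\to T$, and short-time existence applied to $M_T$ extends the smooth convex solution past $T$, contradicting the maximality of $T$. Hence $T=\infty$ and the flow exists for all time while remaining convex.

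The main technical obstacle lies in the second step: the operator $K^{\alpha}$ is not concave in the Weingarten matrix for arbitrary $\alpha>0$, so Evans--Krylov does not apply to \eqref{eq-sptf} verbatim. One handles this by exploiting the fact that $K^{1/n}$ (equivalently $\log K$) is concave in $\mathcal{W}$, so that after rewriting the flow in terms of $\log K$, or equivalently applying the Evans--Krylov estimate to the concave operator $\log K$ together with the uniform two-sided bound on $K$ from the first step, one recovers the required interior $C^{2,\beta}$-regularity; the remaining Schauder bootstrap and smooth extension are then routine.
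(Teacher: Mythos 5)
Your overall strategy is the same as the paper's: argue by contradiction on a finite maximal time $T$, combine Proposition \ref{preserve convex} with Proposition \ref{propKupp} to get uniform two-sided bounds $0<\underline{\kappa}_0\leq\kappa_i\leq\overline{\kappa}_0$ on $[0,T)$ (your division $\kappa_j=K/\prod_{i\neq j}\kappa_i$ is exactly the step the paper leaves implicit), pass to the scalar support-function equation \eqref{eq-sptf}, which is then uniformly parabolic by Lemma \ref{t0-bound on u,du} and the curvature bounds, obtain uniform $C^{2,\gamma}$ estimates, bootstrap with parabolic Schauder theory, and continue the solution past $T$. The convexity statement is, in both your argument and the paper's, supplied by the continuation argument inside Proposition \ref{preserve convex}.

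The one genuine defect is your treatment of the concavity issue, which is both misdiagnosed and, as written, not a proof. The obstacle you identify is absent in the formulation actually used: \eqref{eq-sptf} is written in the Gauss-map/support-function variables, where $K^Y=(\det\mathfrak{r})^{-1}$ with $\mathfrak{r}_{ij}=\bar{\nabla}_i\bar{\nabla}_js+s\sigma_{ij}$, and the operator $-B(\det\mathfrak{r})^{-\alpha}$ is concave in $s_{ij}$ for \emph{every} $\alpha>0$, since $(\det\mathfrak{r})^{1/n}$ is concave on positive definite matrices and $x\mapsto x^{-n\alpha}$ is convex and decreasing, so $(\det\mathfrak{r})^{-\alpha}$ is convex in $s_{ij}$; the paper verifies this by computing $\ddot{G}^{ij,k\ell}\leq 0$ and then applies Theorem 1.1 of \cite{TW13} (a parabolic Evans--Krylov type result) directly to $G$. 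By contrast, your substitute argument --- ``apply the Evans--Krylov estimate to the concave operator $\log K$ together with the uniform two-sided bound on $K$'' --- does not deliver $C^{2,\beta}$ as stated: in the parabolic problem $K$ is not given data (it contains $\partial_t s$ and the second derivatives of the unknown through \eqref{eq-sptf}), and an $L^\infty$ bound on a right-hand side is in any case insufficient for Evans--Krylov; to make the $\log K$ route rigorous you would have to rewrite the equation as $\log\det\mathfrak{r}=-\tfrac{1}{\alpha}\log\bigl((A\phi(t)-\partial_t s)/B\bigr)$ and control the H\"older norm of the right side, which is circular without further input. The repair is simply to use the concavity already present in \eqref{eq-sptf}; with that correction your proof coincides with the paper's.
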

\proof
We will argue by contradiction. Let $[0,T)$ be the maximum interval such that the solution of the flow \eqref{flow-VMCF} exists with $T<\infty$. Then combining Proposition \ref{preserve convex} and Proposition \ref{propKupp} yields that the principal curvatures $\kappa=(\kappa_1,\dots,\kappa_n)$ of the solution $M_t$ satisfy
\begin{equation}\label{s5.1}
		0<\underline{\kappa}_0\leq \kappa_i\leq \overline{\kappa}_0,\quad i=1,\dots,n
	\end{equation}
for all time $t\in [0,T)$, where the constants $\underline{\kappa}_0, \overline{\kappa}_0$ depend on $n,\alpha,M_0$ and $T$.

To prove the long time existence of the solution $M_t$ of the flow \eqref{flow-VMCF}, we need to derive the higher order regularity estimates. Recall that in $\S$\ref{subsec}, up to a tangential diffeomorphism, the flow equation \eqref{flow-VMCF} is equivalent the following scalar parabolic equation
\begin{align}\label{eqqs}
	 \frac{\partial}{\partial t}s=&A\phi(t)-B(K^{Y})^{\alpha}=~A\phi(t)-B(\det \mathfrak{r})^{-\alpha}
\end{align}
on the sphere $\mathbb{S}^n$, where $\mathfrak{r}_{ij}=s_{ij}+s\sigma_{ij}$ and $A$, $B$ are functions defined in \eqref{eqAB} which depend only on $s$ and $\bar{\nabla}s$. Combing Lemma \ref{t0-bound on u,du} and the curvature estimate \eqref{s5.1} gives the $C^2$ estimates of $s(z,t)$. Denote the right hand side of \eqref{eqqs} by $G(\bar{\nabla}^2 s,\bar{\nabla} s,s,z,t)$. Then the derivatives of $G$ with respect to the first argument are given by
\begin{equation}
	\dot{G}^{ij}=n\alpha B(\det \mathfrak{r})^{-\alpha-\frac{1}{n}}\frac{\partial(\det \mathfrak{r})^{\frac{1}{n}}}{\partial s_{ij}}
\end{equation}
and
\begin{equation}
\begin{split}
	\ddot{G}^{ij,k\ell}&=n\alpha B(\det \mathfrak{r})^{-\alpha-\frac{1}{n}}\frac{\partial^2(\det \mathfrak{r})^{\frac{1}{n}}}{\partial s_{ij}\partial s_{k\ell}}\\
	&-n\alpha(n\alpha+1)B(\det \mathfrak{r})^{-\alpha-\frac{2}{n}}\frac{\partial(\det \mathfrak{r})^{\frac{1}{n}}}{\partial s_{ij}}\frac{\partial(\det \mathfrak{r})^{\frac{1}{n}}}{\partial s_{k\ell}}.
\end{split}
\end{equation}

The estimates we have established imply the existence of a constant $C=C(T)>0$, such that $1/C I\leq (\dot{G^{ij}})\leq CI$, that is, the operator $G$ is uniformly elliptic on the finite time interval $[0,T)$. Since $(\det \mathfrak{r})^{\frac{1}{n}}$ is concave with respect to $s_{ij}$, we see that $G$ is also a concave operator. We can apply Theorem 1.1 in \cite{TW13} to obtain a $C^{2,\gamma}$ estimate on $s$, for a suitable $\gamma\in(0,1)$. See also the arguments in \cite{CS10,Mc05} for the $C^{2,\gamma}$ estimate of the solutions to volume preserving curvature flows. Then by the parabolic Schauder theory (see \cite{Lie96}), we can deduce all higher order regularity estimates of $s$ on $[0,T)$ and a standard continuation argument then shows that $T=+\infty$.
\endproof
\begin{rem}
Note that the curvature estimate \eqref{s5.1} of the solution $M_t$ of the flow \eqref{flow-VMCF} depends on time $t$ and may degenerate as time $t\to\infty$. To study the asymptotical behavior of $M_t$ as $t\to\infty$, we still need to get an uniform curvature estimate which does not depend on time. This will be obtained in the next two sections.
\end{rem}

\section{Monotonicity and Hausdorff convergence}\label{sec.hau}
In this section, we prove the monotonicity of $\mathcal{A}_{n-1}(\Omega_t)$, the subsequential Hausdorff convergence of the solution $M_t$ of \eqref{flow-VMCF} and the convergence of the center of the inner ball of $\Omega_t$ to a fixed point.

Denote the average integral of the Gauss curvature by
\begin{equation}\label{s6.0}
	\bar{K}=\frac{1}{|M_t|}\int_{M_t}{K d\mu_t}=\frac{\mathcal{A}_n(\Omega_t)+\frac{1}{n-1}\mathcal{A}_{n-2}(\Omega_t)}{\mathcal{A}_0(\Omega_t)}.
\end{equation}
It follows from the monotonicity \eqref{s2.Akmo} of quermassintegrals with respect to inclusion of convex sets and the estimates on inner radius and outer radius in Lemma \ref{in-out} that $\bar{K}$ is uniformly bounded from above and below by positive constants depending only on $n$, $\alpha$ and $M_0$.
\subsection{Monotonicity for $\mathcal{A}_{n-1}$}
We first show the following monotonicity of $\mathcal{A}_{n-1}(\Omega_t)$ along the flow \eqref{flow-VMCF}, which will be useful in proving the subsequential Hausdorff convergence of $M_t$.
\begin{lem}\label{lemmono}
	Let $M_t$ be a smooth convex solution of the volume preserving flow \eqref{flow-VMCF}. Denote by $\Omega_t$ the domain enclosed by $M_t$. Then $\mathcal{A}_{n-1}(\Omega_t)$ is monotone decreasing in time $t$, and is strictly decreasing unless $\Omega_t$ is a geodesic ball.
\end{lem}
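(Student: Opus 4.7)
The plan is to apply the variational formula \eqref{eqWk} for quermassintegrals together with an integral Chebyshev-type inequality that exploits the monotonicity of $K \mapsto K^\alpha$ in $K$.

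First I would use \eqref{eqWk} with $k=n-1$ and the normal velocity $\eta = \phi(t) - K^\alpha$ (recalling $\sigma_n = K$) to obtain
\begin{equation*}
	\frac{d}{dt}\mathcal{A}_{n-1}(\Omega_t) = n\int_{M_t}\bigl(\phi(t) - K^\alpha\bigr)K\,d\mu_t = n\phi(t)\int_{M_t}K\,d\mu_t - n\int_{M_t}K^{\alpha+1}\,d\mu_t,
\end{equation*}
and, using the definition \eqref{eqphi} of $\phi(t)$, rewrite this as
\begin{equation*}
	\frac{d}{dt}\mathcal{A}_{n-1}(\Omega_t) = \frac{n}{|M_t|}\left(\int_{M_t}K^\alpha\,d\mu_t\int_{M_t}K\,d\mu_t - |M_t|\int_{M_t}K^{\alpha+1}\,d\mu_t\right).
\end{equation*}

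The key step is to prove the inequality
\begin{equation*}
	|M_t|\int_{M_t}K^{\alpha+1}\,d\mu_t \geq \int_{M_t}K^\alpha\,d\mu_t\int_{M_t}K\,d\mu_t,
\end{equation*}
which follows from the elementary pointwise inequality $(K(x)^\alpha - K(y)^\alpha)(K(x) - K(y)) \geq 0$ valid for any $x,y \in M_t$ since $\alpha > 0$ makes $K \mapsto K^\alpha$ nondecreasing. Integrating this over $M_t\times M_t$ with respect to $d\mu_t(x)\,d\mu_t(y)$ and expanding gives the desired inequality. This immediately yields $\tfrac{d}{dt}\mathcal{A}_{n-1}(\Omega_t) \leq 0$.

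For the strict decrease statement, note that equality in the above integrated inequality holds if and only if $K(x) = K(y)$ for $\mu_t\times\mu_t$-a.e. $(x,y)$, that is, $K$ is constant on $M_t$. Since $M_t$ is a smooth closed convex hypersurface in $\mathbb{H}^{n+1}$, the classical Alexandrov-type theorem of Montiel and Ros \cite{MS91} (mentioned in the excerpt as the smooth case of Theorem \ref{gAT}) implies that $M_t$ is then a geodesic sphere, i.e.\ $\Omega_t$ is a geodesic ball. The only conceptual point to watch is that the differentiation formula \eqref{eqWk} is applied with the \emph{non-local} velocity $\eta = \phi(t) - K^\alpha$; this is permissible because \eqref{eqWk} is valid for arbitrary normal variations and both the pointwise term $-K^\alpha$ and the time-dependent constant $\phi(t)$ are handled linearly. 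No further estimates are required, so I expect no serious obstacle here beyond invoking the right Chebyshev-type rearrangement argument.
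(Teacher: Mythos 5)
Your proposal is correct and follows essentially the same route as the paper: apply the variational formula \eqref{eqWk} with $\eta=\phi(t)-K^{\alpha}$, use the definition of $\phi(t)$ to reduce the sign question to a Chebyshev-type inequality between $\int K$, $\int K^{\alpha}$ and $\int K^{\alpha+1}$, and handle equality via the Montiel--Ros theorem \cite{MS91}. The only cosmetic difference is that the paper verifies the Chebyshev inequality by writing the derivative as $-n\int_{M_t}(K-\bar{K})(K^{\alpha}-\bar{K}^{\alpha})\,d\mu_t$ with $\bar{K}$ the mean of $K$, whereas you integrate the pointwise inequality over $M_t\times M_t$; these are equivalent.
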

\proof
From the evolution equation \eqref{eqWk} for the quermassintegrals of $\Omega_t$, we have
\begin{equation*}
	\frac{d}{dt}\mathcal{A}_{n-1}(\Omega_t)=n\int_{M_t}{K(\phi(t)-K^{\alpha})d\mu_t}.
\end{equation*}
Since $\phi(t)$ is defined as in \eqref{eqphi}, we have
\begin{align}\label{eqWmo}
	\frac{d}{dt}\mathcal{A}_{n-1}(\Omega_t)&=\frac{n}{|M_t|}\left(\int_{M_t}K d\mu_t\int_{M_t}K^{\alpha} d\mu_t-|M_t|\int_{M_t}K^{\alpha+1} d\mu_t\right)\notag\\
	&=-n\int_{M_t}{(K-\bar{K})(K^{\alpha}-{\bar{K}}^{\alpha})d\mu_t}\leq 0.
\end{align}
Note that equality holds in \eqref{eqWmo} if and only if $K$ is a constant on $M_t$, which means $M_t$ is a geodesic sphere by the Alexandrov type theorem for hypersurfaces with constant Gauss curvature in the hyperbolic space (see \cite{MS91}).
\endproof
\subsection{Subsequential Hausdorff convergence}
In this subsection, we prove that there exists a sequence of times $t_i\to \infty$ such that the solution $M_{t_i}$ converges in Hausdorff sense to a geodesic sphere. We first prove the following estimate:
\begin{lem}
Let $M_0$ be a smooth closed and convex hypersurface in $\mathbb{H}^{n+1}$ and $M_t$ be the smooth solution of the flow \eqref{flow-VMCF} starting from $M_0$. Then there exists a sequence of times $\{t_i\},t_i\to\infty$, such that
\begin{equation}\label{s6.2-1}
  \int_{M_{t_i}}{|K-\bar{K}|d\mu_{t_i}}\to 0,\quad \text{as}\,\, t_i\to\infty.
\end{equation}
\end{lem}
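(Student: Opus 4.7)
The plan is to exploit the monotonicity of $\mathcal{A}_{n-1}(\Omega_t)$ established in Lemma~\ref{lemmono}. First, I observe that $\mathcal{A}_{n-1}(\Omega_t)$ is uniformly bounded below: Lemma~\ref{in-out} guarantees that $\Omega_t$ contains a geodesic ball of radius $c_1>0$, so the monotonicity of quermassintegrals under inclusion \eqref{s2.Akmo} gives $\mathcal{A}_{n-1}(\Omega_t)\geq \mathcal{A}_{n-1}(B_{c_1})>0$. Combined with the monotone decreasing behavior in $t$, the limit $\lim_{t\to\infty}\mathcal{A}_{n-1}(\Omega_t)$ exists and is finite. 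Integrating \eqref{eqWmo} over $[0,\infty)$ then yields
\begin{equation*}
	\int_0^\infty\!\!\int_{M_t}(K-\bar{K})(K^\alpha-\bar{K}^\alpha)\,d\mu_t\,dt=\tfrac{1}{n}\bigl(\mathcal{A}_{n-1}(\Omega_0)-\lim_{t\to\infty}\mathcal{A}_{n-1}(\Omega_t)\bigr)<\infty.
\end{equation*}
As the time integrand is pointwise nonnegative (the two factors always share the same sign), there must exist a sequence $t_i\to\infty$ along which
\begin{equation*}
	\int_{M_{t_i}}(K-\bar{K})(K^\alpha-\bar{K}^\alpha)\,d\mu_{t_i}\to 0.
\end{equation*}

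The second step is to upgrade this convergence to the desired $L^1$ convergence, using pointwise elementary inequalities together with H\"older's inequality, the uniformly bounded area $|M_t|$ (from Lemma~\ref{in-out}) and the uniform positive bounds on $\bar{K}$ (from the discussion after \eqref{s6.0}). When $\alpha\geq 1$, the super-additivity $(a+b)^\alpha\geq a^\alpha+b^\alpha$ for $a,b\geq 0$, applied with $a=|K-\bar{K}|$ and $b=\min(K,\bar{K})$, gives the unconditional pointwise bound
\begin{equation*}
	(K-\bar{K})(K^\alpha-\bar{K}^\alpha)\geq |K-\bar{K}|^{\alpha+1},
\end{equation*}
and H\"older with exponents $\alpha+1$ and $(\alpha+1)/\alpha$ then gives $\int_{M_{t_i}}|K-\bar{K}|\,d\mu_{t_i}\leq |M_{t_i}|^{\alpha/(\alpha+1)}\bigl(\int_{M_{t_i}}|K-\bar{K}|^{\alpha+1}d\mu_{t_i}\bigr)^{1/(\alpha+1)}\to 0$.

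The main obstacle is the case $0<\alpha<1$: here the exponent $\alpha-1$ is negative and $K$ need not be uniformly bounded above in time, so neither of the single-inequality strategies above works directly. I plan to handle this by splitting $M_{t_i}$ into the two regions $\{K\leq 2\bar{K}\}$ and $\{K>2\bar{K}\}$ and using a different inequality on each. Letting $b$ denote a uniform upper bound for $\bar{K}$, on $\{K\leq 2\bar{K}\}$ one has $\max(K,\bar{K})\leq 2b$, so the mean value theorem applied to $x\mapsto x^\alpha$ forces $(K-\bar{K})(K^\alpha-\bar{K}^\alpha)\geq \alpha(2b)^{\alpha-1}(K-\bar{K})^2$, and Cauchy--Schwarz controls the $L^1$ contribution from this region. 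On $\{K>2\bar{K}\}$, the estimates $K-\bar{K}\geq K/2$ and $K^\alpha-\bar{K}^\alpha\geq(1-2^{-\alpha})K^\alpha$ combine to give $|K-\bar{K}|^{1+\alpha}\leq C_\alpha(K-\bar{K})(K^\alpha-\bar{K}^\alpha)$, after which H\"older with exponents $1+\alpha$ and $(1+\alpha)/\alpha$ handles this piece. Adding the two contributions then yields $\int_{M_{t_i}}|K-\bar{K}|\,d\mu_{t_i}\to 0$, completing the proof.
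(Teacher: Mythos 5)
Your argument is correct, and while the first half (integrating the monotonicity formula \eqref{eqWmo} for $\mathcal{A}_{n-1}(\Omega_t)$ to extract a sequence $t_i\to\infty$ along which $\int_{M_{t_i}}(K-\bar{K})(K^{\alpha}-\bar{K}^{\alpha})\,d\mu_{t_i}\to 0$) is exactly the paper's first step, your upgrade to $L^1$ convergence goes by a genuinely different and more elementary route. For $\alpha\geq 1$ the paper writes $K^\alpha-\bar K^\alpha$ via the integral representation $\alpha\int_0^1((1-s)\bar K+sK)^{\alpha-1}ds\,(K-\bar K)$ and bounds the integrand below by $C\bar K^{\alpha-1}$ (using the two-sided bound on $\bar K$) to get $(K-\bar K)(K^\alpha-\bar K^\alpha)\geq C(K-\bar K)^2$, then applies H\"older; your superadditivity bound $(K-\bar K)(K^\alpha-\bar K^\alpha)\geq|K-\bar K|^{1+\alpha}$ is equally valid and needs only the area bound. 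The real divergence is the case $0<\alpha<1$: the paper splits $|K-\bar K|=|K-\bar K|^{1/2}|K-\bar K|^{1/2}$, uses the reverse integral representation $K-\bar K=\tfrac1\alpha\int_0^1((1-s)\bar K^\alpha+sK^\alpha)^{1/\alpha-1}ds\,(K^\alpha-\bar K^\alpha)$, and then must show the factor $\int_{M_{t_i}}\tfrac1\alpha\int_0^1((1-s)\bar K^\alpha+sK^\alpha)^{1/\alpha-1}ds\,d\mu_{t_i}$ is uniformly bounded, which it does by a case analysis over $\alpha\in[\tfrac1{k+1},\tfrac1k)$ with iterated H\"older inequalities against $\int_{M_{t_i}}K\,d\mu_{t_i}=\bar K|M_{t_i}|$. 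Your decomposition of $M_{t_i}$ into $\{K\leq 2\bar K\}$ and $\{K>2\bar K\}$ avoids that machinery entirely: on the first region the mean value theorem with the uniform upper bound on $\bar K$ gives a quadratic lower bound and Cauchy--Schwarz applies, and on the second region $K-\bar K\geq K/2$ and $K^\alpha-\bar K^\alpha\geq(1-2^{-\alpha})K^\alpha$ recover the same $|K-\bar K|^{1+\alpha}$ bound as in the $\alpha\geq 1$ case, so a single H\"older step suffices. All the inputs you invoke (uniform upper bound on $|M_{t_i}|$ from the outer radius estimate and monotonicity of $\mathcal{A}_0$, uniform upper bound on $\bar K$) are available in the paper, so your proof is complete; it trades the paper's uniform moment bound for a region splitting and is arguably shorter and cleaner for small $\alpha$.
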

\proof
Applying the monotonicity in Lemma \ref{lemmono} and the long time existence of the flow \eqref{flow-VMCF}, we have
\begin{equation*}
	n\int_0^{\infty}\int_{M_t}{(K-\bar{K})(K^{\alpha}-{\bar{K}}^{\alpha})d\mu_t}dt\leq {\mathcal{A}_{n-1}}(\Omega_0)<\infty.
\end{equation*}
Therefore there exists a sequence of times $t_i\to\infty$ such that
\begin{equation}\label{s6.2-2}
	\int_{M_{t_i}}{(K-\bar{K})(K^{\alpha}-{\bar{K}}^{\alpha})d\mu_{t_i}}\to 0.
\end{equation}
If $\alpha\geq 1$, we have
\begin{align*}
	(K-\bar{K})(K^{\alpha}-{\bar{K}}^{\alpha})&=\alpha\int_0^1{((1-s)\bar{K}+sK)^{\alpha-1}ds}\cdot (K-\bar{K})^2\\
	&\geq \alpha\int_0^1{(1-s)^{\alpha-1}ds}\bar{K}^{\alpha-1}(K-\bar{K})^2\\
	&\geq C(K-\bar{K})^2,
\end{align*}
where we used the fact that $\bar{K}$ is uniformly bounded and $\alpha\geq 1$. Therefore, \eqref{s6.2-2} implies that
\begin{equation*}%\label{se-t}
	\int_{M_{t_i}}{(K-\bar{K})^2d\mu_{t_i}}\to 0,\quad \text{as}\,\, i\to\infty
\end{equation*}
for a sequence of times $t_i\to\infty$ and the estimate \eqref{s6.2-1} follows by the H\"{o}lder inequality.

We next prove the estimate \eqref{s6.2-1} for $0<\alpha<1$. In this case,  we have
\begin{align*}
  &\int_{M_{t_i}}|K-\bar{K}|d\mu_{t_i} \\
  =&\int_{M_{t_i}}|K-\bar{K}|^{1/2}|K-\bar{K}|^{1/2}d\mu_{t_i} \\
  =&  \int_{M_{t_i}}\Big(\frac 1{\alpha} \int_0^1{((1-s)\bar{K}^\alpha+sK^\alpha)^{\frac{1}{\alpha}-1}ds}\Big)^{1/2}\cdot \Big|(K^\alpha-\bar{K}^\alpha)(K-\bar{K})\Big|^{1/2}d\mu_{t_i} \\
   \leq & \Big(\underbrace{\int_{M_{t_i}}\frac 1{\alpha} \int_0^1{((1-s)\bar{K}^\alpha+sK^\alpha)^{\frac{1}{\alpha}-1}ds}d\mu_{t_i}}_{(I)}\Big)^{1/2}\Big(\underbrace{\int_{M_{t_i}}(K-\bar{K})(K^{\alpha}-{\bar{K}}^{\alpha})d\mu_{t_i}}_{(II)}\Big)^{1/2}.
\end{align*}
The second term $(II)$ tends to zero as $t_i\to\infty$ by \eqref{s6.2-2}. We show that the first term $(I)$ is bounded for any $0<\alpha<1$. In fact, if $\alpha\in [\frac 12,1)$, we have $1<1/\alpha\leq 2$. This implies that
\begin{align*}
  (I)=  & \frac 1{\alpha} \int_{M_{t_i}}\int_0^1{((1-s)\bar{K}^\alpha+sK^\alpha)^{\frac{1}{\alpha}-1}ds}d\mu_{t_i} \\
  \leq & \frac 1{\alpha} \int_{M_{t_i}}(\bar{K}^\alpha+K^\alpha)^{\frac{1}{\alpha}-1}d\mu_{t_i}\\
  =& \frac 1{\alpha} \int_{M_{t_i}}\frac{\bar{K}^\alpha+K^\alpha}{(\bar{K}^\alpha+K^\alpha)^{2-\frac{1}{\alpha}}}d\mu_{t_i}\\
  \leq &\frac 1{\alpha\bar{K}^{2\alpha-1}} \int_{M_{t_i}}(\bar{K}^\alpha+K^\alpha)d\mu_{t_i}\\
  \leq & \frac 2{\alpha} \bar{K}^{1-\alpha}\mathcal{A}_0(\Omega_{t_i})\leq ~C
\end{align*}
is uniformly bounded from above, where we used the H\"{o}lder inequality in the fourth inequality to get the estimate
\begin{equation*}
 \int_{M_{t_i}}K^\alpha d\mu_{t_i}\leq \left(\int_{M_{t_i}}Kd\mu_{t_i}\right)^\alpha \mathcal{A}_0(\Omega_{t_i})^{1-\alpha}=\bar{K}^{\alpha}\mathcal{A}_0(\Omega_{t_i})
\end{equation*}
for $\alpha<1$ and that $\mathcal{A}_0(\Omega_{t_i})\leq \mathcal{A}_0(B_{c_2}(0))\leq C$ which is due to that the outer radius of $\Omega_{t_i}$ is bounded from above by $c_2$. We repeat the argument for $\alpha\in [\frac{1}{k+1},\frac{1}{k})$ with $k=2,3,\cdots$, where we have $(k+1)\alpha\geq 1$ and $0<k\alpha<1$. Then
\begin{align*}
  (I) \leq & \frac 1{\alpha} \int_{M_{t_i}}(\bar{K}^\alpha+K^\alpha)^{\frac{1}{\alpha}-1}d\mu_{t_i}\\
  =& \frac 1{\alpha} \int_{M_{t_i}}\frac{(\bar{K}^\alpha+K^\alpha)^k}{(\bar{K}^\alpha+K^\alpha)^{k+1-\frac{1}{\alpha}}}d\mu_{t_i}\\
  \leq &\frac 1{\alpha\bar{K}^{(k+1)\alpha-1}} \int_{M_{t_i}}2^{k-1}(\bar{K}^{k\alpha}+K^{k\alpha})d\mu_{t_i}\\
  \leq & \frac {2^k}{\alpha} \bar{K}^{1-\alpha}\mathcal{A}_0(\Omega_{t_i})\leq ~C
\end{align*}
is uniformly bounded, where in the third inequality we used the H\"{o}lder inequality again as $k\alpha<1$. Therefore for any $0<\alpha<1$, the term $(I)$ is uniformly bounded from above and thus the estimate \eqref{s6.2-1} follows in this case.
\endproof

Note that the curvature estimate \eqref{s5.1} of the solution $M_t$ of the flow \eqref{flow-VMCF} depends on time $t$ and may degenerate as time $t\to\infty$. So we can not conclude from \eqref{s6.2-1} that the solution $M_{t_i}$ converges to a geodesic sphere as $t_i\to\infty$, as we do not have an uniform regularity estimate to guarantee the existence of a smooth limit of $M_{t_i}$. However, we can apply the similar idea in the work \cite[\S 6]{AW21} by the first author and Andrews in the Euclidean space to prove the following subsequential Hausdorff convergence result:
\begin{lem}\label{subcon}
	Let $M_0$ be a smooth, closed convex hypersurface in $\mathbb{H}^{n+1}$ and $M_t$ be the smooth solution of the flow \eqref{flow-VMCF} starting from $M_0$. Then there exists a sequence of times $\{t_i\},t_i\to\infty$, such that $M_{t_i}$ converges to a geodesic sphere $S_{\rho_{\infty}}(p)$ in Hausdorff sense as $t_i\to\infty$, where $p$ is the center of the sphere and the radius $\rho_{\infty}$ is determined by the fact that $S_{\rho_{\infty}}(p)$ encloses the same volume of $M_0$.
\end{lem}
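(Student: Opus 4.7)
The plan is to use the Blaschke selection theorem together with the weak continuity of curvature measures and Kohlmann's theorem (Theorem \ref{gAT}) to identify the Hausdorff limit as a geodesic ball, and then use volume preservation to pin down its radius.

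First, I would extract a Hausdorff-convergent subsequence. By Lemma \ref{in-out}, the outer radii of $\Omega_t$ are uniformly bounded, so all $\Omega_t$ lie inside a fixed ball $B_{c_2}(o)\subset \mathbb{H}^{n+1}$. Choose $\{t_i\}$ so that \eqref{s6.2-1} holds. By the Blaschke selection theorem, after passing to a subsequence (not relabeled), $\Omega_{t_i}\to \hat\Omega$ in the Hausdorff topology for some convex $\hat\Omega$. Since $B_{c_1}(p_{t_i})\subset\Omega_{t_i}$ by Lemma \ref{in-out}, a further subsequence gives $p_{t_i}\to p$, and then $B_{c_1}(p)\subset\hat\Omega$, so $\hat\Omega\in\mathcal{K}(\mathbb{H}^{n+1})$. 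Moreover $|\hat\Omega|=\lim|\Omega_{t_i}|=|\Omega_0|$ by Lemma \ref{inquer} with $k=-1$. Since $\bar K(t_i)$ is uniformly bounded from above and below by positive constants (see the discussion after \eqref{s6.0}), along a further subsequence $\bar K(t_i)\to c>0$.

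Next, I would show that the limit satisfies the curvature-measure equation $\Phi_0(\hat\Omega,\cdot)=c\,\Phi_n(\hat\Omega,\cdot)$. In the smooth case $\Phi_0(\Omega_{t_i},\beta)=\int_{M_{t_i}\cap\beta}K\,d\mu_{t_i}$ and $\Phi_n(\Omega_{t_i},\beta)=\mathcal H^n(M_{t_i}\cap\beta)$. For any $f\in C_c(\mathbb{H}^{n+1})$,
\begin{equation*}
\Bigl|\int f\,d\Phi_0(\Omega_{t_i},\cdot)-\bar K(t_i)\int f\,d\Phi_n(\Omega_{t_i},\cdot)\Bigr|\le \|f\|_\infty\int_{M_{t_i}}|K-\bar K(t_i)|\,d\mu_{t_i}\longrightarrow 0,
\end{equation*}
by \eqref{s6.2-1}. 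On the other hand, the weak continuity result (Theorem \ref{s2.thmcurv}) yields
\begin{equation*}
\int f\,d\Phi_0(\Omega_{t_i},\cdot)\to\int f\,d\Phi_0(\hat\Omega,\cdot),\qquad \int f\,d\Phi_n(\Omega_{t_i},\cdot)\to\int f\,d\Phi_n(\hat\Omega,\cdot).
\end{equation*}
Combining these with $\bar K(t_i)\to c$ gives $\int f\,d\Phi_0(\hat\Omega,\cdot)=c\int f\,d\Phi_n(\hat\Omega,\cdot)$ for every $f\in C_c(\mathbb{H}^{n+1})$, hence equality of the Borel measures $\Phi_0(\hat\Omega,\cdot)=c\,\Phi_n(\hat\Omega,\cdot)$.

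Finally, since $n\ge 2$ and $\hat\Omega$ is compact, connected, convex with nonempty interior, Kohlmann's theorem (Theorem \ref{gAT} with $k=0$) implies that $\hat\Omega$ is a geodesic ball $B_{\rho_\infty}(p)$ for some center $p$ and radius $\rho_\infty$. The identity $|\hat\Omega|=|\Omega_0|$ determines $\rho_\infty$ uniquely, completing the proof. The main obstacle I anticipate is bookkeeping the two successive subsequence extractions (first to obtain Hausdorff convergence of $\Omega_{t_i}$, then to make $\bar K(t_i)$ convergent) and checking that the approximate identity between the measures $\Phi_0(\Omega_{t_i},\cdot)$ and $\bar K(t_i)\Phi_n(\Omega_{t_i},\cdot)$ passes to the weak limit; both are routine once the estimate \eqref{s6.2-1} and the weak continuity Theorem \ref{s2.thmcurv} are in hand.
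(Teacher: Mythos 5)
Your argument from the Hausdorff limit onward (the approximate identity between $\Phi_0(\Omega_{t_i},\cdot)$ and $\bar K(t_i)\Phi_n(\Omega_{t_i},\cdot)$, passage to the weak limit via Theorem \ref{s2.thmcurv}, and Kohlmann's Theorem \ref{gAT} with $k=0$) is exactly the paper's argument and is fine. The gap is at the very first step: you claim that the uniform bound $\rho_+(t)\le c_2$ from Lemma \ref{in-out} forces all $\Omega_t$ to lie in a fixed ball $B_{c_2}(o)$. It does not. The outer radius controls the size of the smallest enclosing geodesic ball, but says nothing about where that ball sits; since the flow is nonlocal and only volume preserving, a priori the domains could translate off to infinity in $\mathbb{H}^{n+1}$. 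Without confinement in a fixed compact set you cannot invoke the Blaschke selection theorem for the $\Omega_{t_i}$ themselves, nor conclude that the inball centers $p_{t_i}$ subconverge. Everything downstream (the limit $\hat\Omega$, its nonempty interior, the center $p$) depends on this unjustified confinement.

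The paper deals with precisely this issue in two steps that your proposal omits. First, it applies ambient isometries $\varphi_{t_i}$ moving the inball center $p_{t_i}$ to the origin, and applies Blaschke selection (after projecting to the Klein model, so that the Euclidean version from \cite{RS2014} applies) to the \emph{normalized} domains $\varphi_{t_i}(\Omega_{t_i})$, which manifestly sit in a fixed ball; the curvature-measure and Kohlmann argument then shows the normalized limit is a geodesic ball of radius $\rho_\infty$ centered at the origin. Second, to remove the isometric correction it invokes an Alexandrov reflection argument (Lemma 4.1 of \cite{BenChenWei}): there is a fixed ball $B_R\supset\Omega_0$ with $\Omega_t\cap B_R\neq\emptyset$ for all $t$, which together with $\rho_+(t)\le c_2$ confines the centers $p_{t_i}$ to a compact set, so that a further subsequence gives $p_{t_i}\to p$ and hence $\Omega_{t_i}\to B_{\rho_\infty}(p)$ for the original, uncorrected domains. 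To repair your proof you must either insert this reflection/confinement argument before extracting the Hausdorff-convergent subsequence, or follow the paper's normalize-then-untwist route; as written, the step ``all $\Omega_t$ lie inside a fixed ball'' is a genuine missing ingredient, not bookkeeping.
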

\proof
Let $p_t$ be the center of the inball of $\Omega_t$ and let $\varphi_t:\mathbb{H}^{n+1}\to\mathbb{H}^{n+1}$ be an isometry carrying $p_t$ to the origin $o\in \mathbb{H}^{n+1}$. Clearly, each $\varphi_t(M_t)$ is a closed convex hypersurface with an inball centered at the origin and having inner radius $\rho_{-}(t)\geq c_1$.  For simplicity of the notations, we still denote the transformed solution $\varphi_t(M_t)$ as $M_t$.

As in \S \ref{subsec}, we project $\Omega_t\subset \mathbb{H}^{n+1}$ onto $B_1(0)\subset\mathbb{R}^{n+1}$ and get the corresponding $\tilde{\Omega}_t\subset B_1(0)\subset \mathbb{R}^{n+1}$. Since the outer radius of $\Omega_t$ is uniformly bounded above and so does $\tilde{\Omega}_t$, the Blaschke selection theorem (see Theorem 1.8.7 of \cite{RS2014}) implies that there exists a sequence of times $t_i$ and a convex body $\tilde{\Omega}$ such that $\tilde{\Omega}_{t_i}$ converges to $\tilde{\Omega}$ in Hausdorff sense as $t_i\to\infty$. Note that the projection yields an one-to-one correspondence between the convex bodies in $\mathbb{H}^{n+1}$ and the convex bodies in $B_1(0)\subset \mathbb{R}^{n+1}$, then there exists a convex set $\hat{\Omega}\in\mathcal{K}(\mathbb{H}^{n+1})$ such that $\Omega_{t_i}$ converges to $\hat{\Omega}$ in Hausdorff sense as $t_i\to\infty$. As each $\Omega_{t_i}$ has inner radius $\rho_{-}(\Omega_{t_i})\geq c_1$, the limit convex set $\hat{\Omega}$ has positive inner radius. Without loss of generality, we may assume that the sequence $t_i$ is the same sequence such that \eqref{s6.2-1} holds.

By the continuity of quermassintegrals with respect to the Hausdorff distance in Lemma \ref{inquer}, the Hausdorff convergence of $\Omega_{t_i}$ to $\hat{\Omega}$ implies that
\begin{equation}\label{s6.1-0}
  \bar{K}=\frac{\mathcal{A}_n(\Omega_{t_i})+\frac{1}{n-1}\mathcal{A}_{n-2}(\Omega_{t_i})}{\mathcal{A}_0(\Omega_{t_i})}~\to~\frac{\mathcal{A}_n(\hat{\Omega})+\frac{1}{n-1}\mathcal{A}_{n-2}(\hat{\Omega})}{\mathcal{A}_0(\hat{\Omega})}~=:c
\end{equation}
as $t_i\to \infty$.  We will show that $\hat{\Omega}$ satisfies the equation $\Phi_0(\hat{\Omega},\cdot)=c\Phi_n(\hat{\Omega},\cdot)$ for the curvature measures $\Phi_0$ and $\Phi_n$. In fact, by the weak continuity of the curvature measures in Theorem \ref{s2.thmcurv}, for any bounded continuous function $f$ on $\mathbb{H}^{n+1}$ with compact support, we have that $\int{f d\Phi_0(\Omega_{t_i})}$ converges to $\int{f d\Phi_0(\hat{\Omega})}$, and $\int{f d\Phi_n(\Omega_{t_i})}$ converges to $\int{f d\Phi_n(\hat{\Omega})}$, as $i\to\infty$. Then
\begin{align}\label{s6.1-1}
	&\left|\int{f d\Phi_0(\Omega_{t_i})}-c\int{f d\Phi_n(\Omega_{t_i})}\right|\nonumber\\
	=&\left|\int_{M_{t_i}}{fK d\mathcal{H}^n}-\int_{M_{t_i}}{cf d\mathcal{H}^n}\right|\nonumber\\
	\leq&\sup|f|\int_{M_{t_i}}{|K-c| d\mathcal{H}^n}\nonumber\\
	\leq&\sup|f|\int_{M_{t_i}}{|K-\bar{K}| d\mathcal{H}^n}+\sup|f|\int_{M_{t_i}}{|\bar{K}-c| d\mathcal{H}^n}~\to~0
\end{align}
by the estimates \eqref{s6.2-1} and \eqref{s6.1-0}.

It follows that $\int{f d\Phi_0(\hat{\Omega})}=c\int{f d\Phi_n(\hat{\Omega})}$ for all bounded continuous functions $f$, and therefore $\Phi_0(\hat{\Omega},\cdot)=c\Phi_n(\hat{\Omega},\cdot)$ as claimed. Then by Theorem \ref{gAT}, $\hat{\Omega}$ is a geodesic ball. This means that $\varphi_{t_i}(\Omega_{t_i})$ converges to a geodesic ball of some radius $\rho_\infty$ centered at the origin. The radius is uniquely determined by the preserving of the volume. Moreover, by an argument of Alexandrov reflection, the evolving domains $\Omega_t$ cannot leave away from the initial domain $\Omega_0$, that is, there exists a geodesic ball $B_R\subset \mathbb{H}^{n+1}$ of radius $R>0$ containing $\Omega_0$ such that $\Omega_t\cap B_R\neq \emptyset$ for all time $t\in [0,\infty)$, see Lemma 4.1 in \cite{BenChenWei}. In particular, this implies that the centers $p_{t_i}$ of inner ball of $\Omega_{t_i}$ are located in a compact subset of $\mathbb{H}^{n+1}$, and then there exists a subsequence of $t_i$ (still denoted by $t_i$) such that $p_{t_i}$ converges to a limit point $p\in \mathbb{H}^{n+1}$. This concludes that for this subsequence of times $t_i$, $\Omega_{t_i}$ converges to a geodesic ball $B_{\rho_\infty}(p)$ centered at the point $p$ without correction of ambient isometry $\varphi_{t_i}$. This completes the proof of Lemma \ref{subcon}.
\endproof

\subsection{Convergence of the center of the inner ball}
The argument in \cite{AW21} is not sufficient for us to deduce the Hausdorff convergence of $M_t$ to the geodesic sphere for all time $t\to \infty$, as we do not have the analogue stability estimate as in \cite[Eq.(7.124)]{RS2014} for the hyperbolic case. However, if we denote $p_t$ as the center of the inner ball of $\Omega_t$, we can prove that $p_t$ converges to the fixed point $p\in \mathbb{H}^{n+1}$ for all time $t\to\infty$ using the Alexandrov reflection and the subsequential Hausdorff convergence of $M_t$ proved in Lemma \ref{subcon}.

Recall that $p\in \mathbb{H}^{n+1}$ is the center of the limit geodesic sphere $S_{\rho_{\infty}}(p)$ in Lemma \ref{subcon}. Take an arbitrary direction $z\in T_p\mathbb{H}^{n+1}$. Let $\gamma_z$ be the normal geodesic line (i.e. $|\gamma'|=1$) through the point $p$ with $\gamma_z(0)=p$ and $\gamma'_z(0)=z$, and let $H_{z,s}$ be the totally geodesic hyperplane in $\mathbb{H}^{n+1}$ that is perpendicular to $\gamma_z$ at $\gamma_z(s), s\in\mathbb{R}$. We use the notation $H_{z,s}^{+}$ and $H_{z,s}^{-}$ for the half-spaces in $\mathbb{H}^{n+1}$ determined by $H_{z,s}$ as follows:
\begin{equation*}
	H_{z,s}^{+}:=\bigcup_{s'\geq s}H_{z,s'},\qquad  H_{z,s}^{-}:=\bigcup_{s'\leq s}H_{z,s'}.
\end{equation*}
For a bounded domain $\Omega$ in $\mathbb{H}^{n+1}$, denote
\begin{equation*}
	\Omega_z^{+}(s)=\Omega\cap H_{z,s}^{+},\qquad \Omega_z^{-}(s)=\Omega\cap H_{z,s}^{-}.
\end{equation*}
The reflection map across $H_{z,s}$ is denoted by $R_{\gamma_z,s}$. We define
\begin{align*}
	S_{\gamma_z}^{+}(\Omega)&:=\inf\{s\in\mathbb{R}~|~R_{\gamma_z,s}(\Omega_z^{+}(s))\subset\Omega_z^{-}(s)\},\\
	S_{\gamma_z}^{-}(\Omega)&:=\sup\{s\in\mathbb{R}~|~R_{\gamma_z,s}(\Omega_z^{-}(s))\subset\Omega_z^{+}(s)\}.
\end{align*}
The Alexandrov reflection argument implies that $S_{\gamma_z}^{+}(\Omega_t)$ is non-increasing in $t$ for each $z$ (see \cite[Lemma 6.1]{BenWei}). By the definitions of $S_{\gamma_z}^{+}(\Omega_t)$ and $S_{\gamma_z}^{-}(\Omega_t)$, we have $S_{\gamma_z}^{-}(\Omega_t)\leq S_{\gamma_z}^{+}(\Omega_t)$. Since $S_{\gamma_z}^{-}(\Omega_t)=-S_{\gamma_{-z}}^{+}(\Omega_t)$, we also have that $S_{\gamma_z}^{-}(\Omega_t)$ is non-decreasing in $t$ for each $z$. Note that the paper \cite{BenWei} deals with the flow with $h$-convex initial hypersurfaces, the argument in Lemma 6.1 of \cite{BenWei} works for convex solutions as well. The readers may refer to \cite{Chow97,CG96} for more details on the Alexandrov reflection method.

We need the following lemma in proving the convergence of the center of the inner ball.
\begin{lem}\label{center}
	Let $\Omega$ be a bounded convex domain in $\mathbb{H}^{n+1}$, and $\gamma_z, H_{z,s}, S_{\gamma_z}^{+}(\Omega)$ and $S_{\gamma_z}^{-}(\Omega)$ be defined as above. Denote $p_0$ as the center of an inner ball of $\Omega$ and assume that $p_0\in H_{z,s_0}$. Then we have $S_{\gamma_z}^{-}(\Omega)\leq s_0\leq S_{\gamma_z}^{+}(\Omega)$.
\end{lem}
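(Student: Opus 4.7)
The plan is to prove the two inequalities separately; by the $z \leftrightarrow -z$ symmetry of the setup (the definitions of $S^+$ and $S^-$ swap under reversing the direction), it suffices to establish $s_0 \le S^+_{\gamma_z}(\Omega)$, and the inequality $S^-_{\gamma_z}(\Omega) \le s_0$ follows from the same argument applied to the reversed direction.

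I would argue by contradiction. Suppose $s_0 > S^+_{\gamma_z}(\Omega)$. By standard Alexandrov-reflection monotonicity for convex bodies, the set $A := \{s : R_{\gamma_z,s}(\Omega^+_z(s)) \subset \Omega^-_z(s)\}$ is a half-line of the form $[S^+_{\gamma_z}(\Omega),\infty)$, so there exists $s$ with $S^+_{\gamma_z}(\Omega) < s < s_0$ such that $R_{\gamma_z,s}(\Omega^+_z(s)) \subset \Omega^-_z(s)$. Since $p_0 \in H_{z, s_0}$ with $s_0 > s$, the point $p_0$ lies strictly in $H^+_{z,s}$, hence $p_0 \in \Omega^+_z(s)$. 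Let $r = \rho_-(\Omega)$ so $B_r(p_0) \subset \Omega$. The portion of this inball on the positive side, $B_r(p_0) \cap H^+_{z,s}$, lies in $\Omega^+_z(s)$, and therefore its reflection
$$R_{\gamma_z,s}\bigl(B_r(p_0) \cap H^+_{z,s}\bigr) = B_r(R_{\gamma_z,s}(p_0)) \cap H^-_{z,s}$$
is contained in $\Omega^-_z(s) \subset \Omega$. By convexity of $\Omega$, the convex hull
$$K := \mathrm{conv}\Bigl(B_r(p_0) \cup \bigl(B_r(R_{\gamma_z,s}(p_0)) \cap H^-_{z,s}\bigr)\Bigr) \subset \Omega.$$

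The heart of the argument is to exhibit a geodesic ball inside $K$ of radius strictly greater than $r$, thereby contradicting $r = \rho_-(\Omega)$. The intuition is that the reflected piece $B_r(R_{\gamma_z,s}(p_0)) \cap H^-_{z,s}$ protrudes past $\partial B_r(p_0)$ on the negative side of $H_{z,s}$ (in the direction of $R_{\gamma_z,s}(p_0)$), providing extra room which, by convexity, allows one to shift the center of $B_r(p_0)$ slightly along the perpendicular geodesic to $H_{z,s}$ and enlarge its radius by a positive amount controlled by $s_0 - s$.

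The main obstacle is the negative curvature of $\mathbb{H}^{n+1}$: unlike in Euclidean space, the convex hull of two balls of radius $r$ in $\mathbb{H}^{n+1}$ has cross-sections that shrink at intermediate points due to geodesic divergence, so the naive enlargement argument needs care. My strategy to overcome this is to take $s$ close to $s_0$ (rather than close to $S^+_{\gamma_z}(\Omega)$), so that $B_r(p_0)$ and $B_r(R_{\gamma_z,s}(p_0))$ overlap substantially and the relevant geometry near $p_0$ is essentially Euclidean in scale; in this regime one can carry out the enlargement estimate by hyperbolic trigonometry applied to the relevant right-angled configuration at $p_0$. Alternatively, one can exploit the inball structure by tracking the touching set $T = \partial B_r(p_0) \cap \partial\Omega$ (with $p_0 \in \mathrm{conv}(T)$), noting that the reflection condition forces the reflected ball $B_r(R_{\gamma_z,s}(p_0))$ to have its own touching points on $\partial\Omega$, and showing that the combined configuration of supporting hyperplanes at $T$ and $R_{\gamma_z,s}(T)$ is incompatible with $p_0$ lying strictly above the infimum $S^+_{\gamma_z}(\Omega)$.
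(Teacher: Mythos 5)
Your reduction by symmetry and the use of the reflected inball are in the spirit of the paper, but the step you call the heart of the argument is false: the set $K=\mathrm{conv}\bigl(B_r(p_0)\cup\bigl(B_r(p_0')\cap H^-_{z,s}\bigr)\bigr)$, with $p_0'=R_{\gamma_z,s}(p_0)$, contains \emph{no} geodesic ball of radius strictly greater than $r$, so no contradiction with $r=\rho_-(\Omega)$ can come from inside $K$. Indeed $K\subset \mathrm{conv}\bigl(B_r(p_0)\cup B_r(p_0')\bigr)\subset N_r(G):=\{x:\ d(x,G)\le r\}$, where $G$ is the geodesic through $p_0,p_0'$; the tube $N_r(G)$ is convex and contains both balls. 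If $\bar B_{r'}(x)\subset N_r(G)$ with $r'>r$, take $y$ at distance $r'$ from $x$ on a geodesic through $x$ meeting $G$ orthogonally, on the side of $x$ away from $G$; since that geodesic meets $G$ orthogonally at the foot point, $d(y,G)=d(x,G)+r'\ge r'>r$, contradicting $y\in N_r(G)$. So the capsule, and a fortiori your $K$, has inradius exactly $r$ (attained only at the centers); the ``extra room'' intuition is simply wrong, and the reflected half-ball never protrudes beyond the capsule. Your fallback of taking $s$ close to $s_0$ so that the picture is ``essentially Euclidean'' makes things worse rather than better: the Euclidean analogue of the lemma itself fails (for a Euclidean stadium, the convex hull of two balls of radius $r$, one has $S^+_{\gamma_z}=0$ by symmetry while inball centers fill the whole core segment, so $s_0>S^+_{\gamma_z}$ occurs), so any correct proof must use hyperbolic geometry essentially, and the hyperbolic ``waist'' effect you mention works against, not for, an enlargement confined to $K$. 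Your second alternative (supporting hyperplanes at the touching sets $T$ and $R_{\gamma_z,s}(T)$) is where a genuine argument could live, but it is only a one-sentence sketch and is not carried out.

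For comparison, the paper's proof does not look for a larger ball inside the hull of the two balls. It reflects the \emph{whole} inball across $H_{z,\bar s}$ with $\bar s=S^{+}_{\gamma_z}(\Omega)$ and shows $B_{r_0}(p_0')\subset\Omega$: the part in $H^-_{z,\bar s}$ lands in $\Omega$ by the reflection inclusion $R_{\gamma_z,\bar s}(\Omega^+_z(\bar s))\subset\Omega^-_z(\bar s)$, while the part in $H^+_{z,\bar s}$ is contained in the original inball because $H_{z,\bar s}$ is the perpendicular bisector of the segment from $p_0$ to $p_0'$ (here $s_0>\bar s$ is used). The contradiction is then that the reflected ball of the \emph{same} radius avoids $\partial\Omega$, hence a slightly larger concentric ball still fits inside $\Omega$ --- using room in $\Omega$ beyond the capsule, not inside it --- contradicting the definition of the inner radius. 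If you want to salvage your write-up, you should switch to this mechanism (or develop your supporting-hyperplane alternative) rather than trying to enlarge a ball within $K$, which is impossible.
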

\proof
Denote $S_{\gamma_z}^{+}(\Omega)$ by $\bar{s}$, then we have $R_{\gamma_z,\bar{s}}(\Omega_z^{+}(\bar{s}))\subset\Omega_z^{-}(\bar{s})$. Assume that $B_{r_0}(p_0)$ is the inner ball of $\Omega$ centered at $p_0$. First we prove that $s_0\leq \bar{s}$. If not (i.e. $s_0>\bar{s}$), then $B_{r_0}(p'_0):=R_{\gamma_z,\bar{s}}(B_{r_0}(p_0))\subset\Omega$ by $R_{\gamma_z,\bar{s}}(\Omega_z^{+}(\bar{s}))\subset\Omega_z^{-}(\bar{s})$ and the assumption that $s_0>\bar{s}$. See Figure \ref{figure center}. Here $p'_0=R_{\gamma_z,\bar{s}}(p_0)\subset H_{z,s'_0}$ with $s'_0<\bar{s}<s_0$. Then we can deduce that the reflection ball $B_{r_0}(p'_0)\cap \partial\Omega=\emptyset$. It follows that $B_{r_0+\varepsilon}(p'_0)\subset\Omega$ for sufficiently small $\varepsilon$ and hence $p_0$ cannot be the center of an inner ball, which leads to a contradiction.
\begin{figure}[h]
	\begin{tikzpicture}[scale=1.5]
		\path[shade,thick,draw] (-3.5,-3.4) [rounded corners=20pt] -- (-4.3,-2.6)
		[rounded corners=20pt] -- (-4,-1.3)
		[rounded corners=20pt]--(-2.8,-0.1)
		[rounded corners=10pt]--(-1.7,-0.3)
		[rounded corners=20pt]--(-0.3,-0.7)
		[rounded corners=20pt]--(-0.1,-1.9)
		[rounded corners=10pt]--(-0.4,-2.7)
		[rounded corners=20pt]--(-1.9,-3.7)
		[rounded corners=20pt]--cycle;
		\draw[->][thick](-5,-2.4)--(2,-2.4)node[below]{$\gamma_{z}(s)$};
		\draw[->][thick](-3,-4)--(-3,0.6)node[right]{$H_{z,0}$};
		\draw[blue,thick](-1.5,-4)--(-1.5,0.5)node[left]{$H_{z,\bar{s}}$};
		\draw[red,thick](-1.3,-4)--(-1.3,0.5)node[right]{$H_{z,s_0}$};
		\draw[thick](-1.3,-1.5) circle(1.03);
		\filldraw[thick](-1.3,-1.5) circle(0.03) node[right]{$p_0$};
		\draw[dashed](-1.7,-1.5) circle(1.03);
		\filldraw[thick](-1.7,-1.5) circle(0.03) node[left]{$p'_0$};
		\filldraw[thick](-3,-2.4) circle(0.03);
		\node at (-3.1,-2.55) {$p$};
		\node at (-4,0.3){$\mathbb{H}^{n+1}$};
		\node[blue] at (-3.5,-3){$\Omega$};
	\end{tikzpicture}
	\caption{Center of the inner ball.}\label{figure center}
\end{figure}

The inequality $S_{\gamma_z}^{-}(\Omega)\leq s_0$ can be checked similarily as above. This completes the proof of Lemma \ref{center}.
\endproof

\begin{rem}
For a bounded domain $\Omega$, the center of the inner ball may not be unique. Lemma \ref{center} says that all of them must satisfy the two-sided bounds.
\end{rem}

Then we can prove the following convergence result:
\begin{lem}\label{innercon}
Let $M_0$ be a smooth, closed convex hypersurface in $\mathbb{H}^{n+1}$ and $M_t$ be the smooth solution of the flow \eqref{flow-VMCF} starting from $M_0$.	Denote the enclosed domain of $M_t$ by $\Omega_t$. Take an arbitrary direction $z\in T_p{\mathbb{H}^{n+1}}$ and let $\gamma_z, S_{\gamma_z}^{-}(\Omega_t), S_{\gamma_z}^{+}(\Omega_t)$ be defined as above. Then along the flow \eqref{flow-VMCF}, we have
	\begin{equation}\label{reflection}
		\lim_{t\to\infty}S_{\gamma_z}^{-}(\Omega_t)=\lim_{t\to\infty}S_{\gamma_z}^{+}(\Omega_t)=0.
	\end{equation}
	As a consequence, if we set $p_t$ as the center of an inner ball of $\Omega_t$, then we have $d(p_t,p)\to 0$ as $t\to\infty$.
\end{lem}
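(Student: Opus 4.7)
The plan is to combine the Alexandrov-reflection monotonicity of $S^{\pm}_{\gamma_z}(\Omega_t)$ recalled above with the subsequential Hausdorff convergence $\Omega_{t_i} \to B_{\rho_\infty}(p)$ from Lemma~\ref{subcon}. For the limit ball one has $S^{+}_{\gamma_z}(B_{\rho_\infty}(p)) = S^{-}_{\gamma_z}(B_{\rho_\infty}(p)) = 0$: the hyperplane $H_{z,0}$ passes through the center $p$ perpendicular to $\gamma_z$ and is therefore a plane of symmetry, while for any $s \neq 0$ the reflection $R_{\gamma_z,s}$ sends $B_{\rho_\infty}(p)$ to a pushed-off-center ball $B_{\rho_\infty}(\gamma_z(2s))$ that is not contained in (resp.\ does not contain) the original. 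The key analytic step is then a continuity statement: if $\Omega_i \to \Omega$ in Hausdorff topology within the class of convex bodies with uniform positive lower bound on inner radius and uniform upper bound on outer radius, then $S^{\pm}_{\gamma_z}(\Omega_i) \to S^{\pm}_{\gamma_z}(\Omega)$. Semi-continuity in each direction follows by perturbing the candidate parameter: for $s > S^{+}_{\gamma_z}(\Omega)$ the containment $R_{\gamma_z,s}(\Omega \cap H_{z,s}^{+}) \subset \Omega \cap H_{z,s}^{-}$ holds with a positive distance margin (the reflected set lies strictly in the interior of $\Omega \cap H_{z,s}^{-}$), and this margin is stable under small Hausdorff perturbations; conversely, for $s < S^{+}_{\gamma_z}(\Omega)$ the strict violation propagates. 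Applying this continuity to $\Omega_{t_i}$ gives $S^{\pm}_{\gamma_z}(\Omega_{t_i}) \to 0$.

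Next, since $S^{+}_{\gamma_z}(\Omega_t)$ is non-increasing and $S^{-}_{\gamma_z}(\Omega_t)$ is non-decreasing in $t$, their full-time limits exist. Combined with $S^{-}_{\gamma_z}(\Omega_t) \leq S^{+}_{\gamma_z}(\Omega_t)$ and the subsequential bounds,
$$0 \leq \lim_{t\to\infty} S^{-}_{\gamma_z}(\Omega_t) \leq \lim_{t\to\infty} S^{+}_{\gamma_z}(\Omega_t) \leq 0,$$
which establishes \eqref{reflection}.

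For the consequence about the centers, introduce the continuous height function $f_z:\mathbb{H}^{n+1}\to\mathbb{R}$ defined by $q \in H_{z,f_z(q)}$; note that $f_z(p)=0$ and $f_z(\gamma_z(t))=t$. Lemma~\ref{center} yields $S^{-}_{\gamma_z}(\Omega_t) \leq f_z(p_t) \leq S^{+}_{\gamma_z}(\Omega_t)$, hence $f_z(p_t)\to 0$ for every unit $z \in T_p \mathbb{H}^{n+1}$. The centers $\{p_t\}$ lie in a fixed compact set by Lemma~\ref{in-out} together with the Alexandrov-reflection confinement of $\Omega_t$ in a fixed ball (as invoked in the proof of Lemma~\ref{subcon}). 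If $d(p_t,p)\not\to 0$, some subsequence $p_{t_k}\to q \neq p$; taking $z_0$ to be the unit tangent at $p$ of the geodesic from $p$ to $q$, continuity of $f_{z_0}$ gives $f_{z_0}(p_{t_k})\to f_{z_0}(q)=d(p,q)>0$, contradicting $f_{z_0}(p_t)\to 0$.

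The main technical hurdle is the continuity of $S^{\pm}_{\gamma_z}$ at the limit ball under Hausdorff convergence; the inner and outer radius bounds from Lemma~\ref{in-out} provide the uniform non-degeneracy needed to guarantee that the reflection margin and its violation both propagate under small Hausdorff perturbations.
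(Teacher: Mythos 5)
Your overall architecture agrees with the paper's: subsequential Hausdorff convergence to the ball $B_{\rho_\infty}(p)$, monotonicity of $S^{+}_{\gamma_z}$ (non-increasing) and $S^{-}_{\gamma_z}$ (non-decreasing) in $t$, the sandwich $S^{-}\leq S^{+}$, and then Lemma \ref{center} plus compactness of the centers for $d(p_t,p)\to 0$. But the step you yourself flag as the ``main technical hurdle'' --- that $\Omega_i\to\Omega$ in Hausdorff distance (with uniform inner/outer radius bounds) implies $S^{\pm}_{\gamma_z}(\Omega_i)\to S^{\pm}_{\gamma_z}(\Omega)$ --- is where the proposal breaks. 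First, the margin argument is not valid: for $s>S^{+}_{\gamma_z}(\Omega)$ the reflected cap $R_{\gamma_z,s}(\Omega^{+}_z(s))$ is \emph{never} strictly inside $\Omega^{-}_z(s)$ with a positive margin, because the reflection fixes $H_{z,s}$ pointwise, so the two sets share $\Omega\cap H_{z,s}$ and boundary points of $\Omega$ near $H_{z,s}$ reflect to points arbitrarily close to $\partial\Omega$. Second, the continuity statement itself is false, even with uniform radius bounds: in $\mathbb{R}^2$ take $\Omega=[-1,1]^2$ and $\Omega_\delta$ the convex quadrilateral with vertices $(-1,-1),(1,-1),(1,1+\delta),(-1,1)$ (top edge tilted up by slope $\delta/2$). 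Then $\Omega_\delta\to\Omega$ in Hausdorff distance, $S^{+}_{e_1}(\Omega)=0$, but for every $\delta>0$ the reflected image of the top-right vertex lies above the top edge for every $s<1$, so $S^{+}_{e_1}(\Omega_\delta)=1$. The analogous phenomenon occurs in $\mathbb{H}^{n+1}$, so no soft ``margin-stability'' argument can give $\limsup_i S^{+}_{\gamma_z}(\Omega_{t_i})\leq S^{+}_{\gamma_z}(\hat\Omega)$ in general.

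What rescues the statement in the situation at hand is not continuity of $S^{\pm}$ but the special structure available: each $\Omega_{t_i}$ is \emph{convex} and is pinched between concentric balls, $B_{\rho_\infty-d_i}(p)\subset\Omega_{t_i}\subset B_{\rho_\infty+d_i}(p)$ with $d_i\to 0$. The actual content of the paper's proof is the quantitative estimate $S^{+}_{\gamma_z}(\Omega_{t_i})\leq C\sqrt{d_i}$: one places the reflecting hyperplane at distance $|\overline{pp'}|=O(\sqrt{d_i})$ from $p$ (the $\sqrt{d_i}$ scale is forced by hyperbolic trigonometry near the ``equator'', exactly where your margin degenerates), and uses convexity to see that for any $A\in\Omega^{+}_{t_i,z}(s)$ the geodesic chord from the inner ball to $A$ lies in $\Omega_{t_i}$, together with the comparison $|\overline{lb}|>|\overline{br}|$, so the reflected point lands inside. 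Your proposal reduces the lemma to precisely this hard step and then asserts it with an argument that fails; to complete the proof you would need to replace the claimed continuity by such a quantitative reflection estimate for convex bodies trapped in a thin spherical annulus.
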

\begin{proof}
Recall that in Lemma \ref{subcon}, we have proved that there exists a sequence of times $t_i\to\infty$, such that $M_{t_i}$ converges to a geodesic sphere $S_{\rho_{\infty}}(p)=\partial B_{\rho_{\infty}}(p)$ in Hausdorff sense as $t_i\to\infty$, where $\rho_{\infty}$ is the radius determined by $\Omega_0$ and $p$ is the center of the ball. Then there exists a sequence of  $d_i\rightarrow 0$ with $d_i<\rho_{\infty}$ such that
	\begin{equation*}
	M_{t_i}\subset B_{\rho_{\infty}+d_i}(p)/ B_{\rho_{\infty}-d_i}(p).
	\end{equation*}	
	Hence by the monotonicity of $S_{\gamma_z}^{+}(\Omega_t)$, if we can show that $S_{\gamma_z}^{+}(\Omega_{t_i})\leq C\sqrt{d_i}$ for some $C=C(\rho_{\infty})>0$, then we have $S_{\gamma_z}^{+}(\Omega_{t})\leq C\sqrt{d_i},\ \forall t\geq t_i$.
	
Denote the geodesic segment starting from $A_1$ to $A_2$ by $\overline{A_1A_2}$, and the length by $|\overline{A_1A_2}|$. 	Let $P,\ P'$ be two totally geodesic hyperplane which is perpendicular to $\gamma_{z}$ and passes through $p$ and some $p'$ respectively on $\gamma_{z}$. Let $E=P\cap\partial B_{\rho_\infty-d_i}(p)$ and $E'=P'\cap\partial B_{\rho_\infty+d_i}(p)$. By a simple continuity argument, there exists $P'$ such that $\forall e'\in E'$, the geodesic line starting form $e'$ and is perpendicular to $P'$ would pass through some $e\in E$, see figure \ref{fig2}.

\begin{figure}[h]
	\begin{tikzpicture}[scale=1.2]
		\path[shade,draw,thick] (-4.2,-3.6) [rounded corners=11pt] -- (-4.75,-2.6)
		[rounded corners=28pt]-- (-4.6,-0.8)
		[rounded corners=20pt]--(-2.8,0.4)
		[rounded corners=10pt]--(-1.7,0.2)
		[rounded corners=20pt]--(-0.3,-0.5)
		[rounded corners=15pt]--(0.1,-1.9)
		[rounded corners=15pt]--(-0.4,-3.2)
		[rounded corners=25pt]--(-1.9,-4.3)
		[rounded corners=25pt]--cycle;
		\draw[dashed,blue](-2.5,-1.9) circle(2);
		\draw[dashed,red](-2.5,-1.9) circle(2.7);
		%\filldraw[thick](-4.7,-2.6) circle(0.03);
		\draw[thick](-2.5,0.1)--(-0.59,0.01);
		\filldraw[thick](-2.5,0.1) circle(0.03);
		
		\filldraw[thick](-0.59,0.01) circle(0.03);
			\node at (-0.05,0.2) {$e'\in E'$};
		
		\filldraw[thick](-2.5,-1.9) circle(0.03);
		\node at (-2.3,-0.1) {$e\in E$};
	
		\node at (-2.3,-2.1) {$p$};
		
		\draw[thick] (-0.65,-1.9) arc (180:186:30);
		\draw[thick] (-0.65,-1.9) arc (180:174:30);
		
		\draw[->][thick](-6.0,-1.9)--(2.0,-1.9) node[below]{$\gamma_z$};
		\draw[thick](-2.5,-5.1)--(-2.5,1.2) node[right]{$P$};
		\node at (-0.2,1.17) {$P'$};
		
		\node at (-1.36,-5) {$H_{z,s}^-$};
		\node at (0,-5) {$H_{z,s}^+$};

		\draw[red,thick](-4.35,-1.15)--(0.14,-1.35);

		\filldraw[thick](-4.35,-1.15) circle(0.03);
			\filldraw[thick](0.14,-1.35) circle(0.03);
		\filldraw[thick](-2.43,-1.23) circle(0.03);
		
			\draw[thick,dashed](-2.5,-1.9)--(-2.43,-1.23);
		
		\filldraw[thick](-0.65,-1.32) circle(0.03);
		\filldraw[thick](-0.2,-1.33) circle(0.03);
		
		\filldraw[thick](-0.65,-1.9) circle(0.03);
	
		\node at (-4.25,-1.7) {$l$};
		\node at (-2.3,-1) {$a$};
		\node at (-0.5,-1.1) {$b$};
		\node at (-0.25,-1.1) {$A$};
		\node at (0.35,-1.15) {$r$};
		\node at (-0.85,-2.1) {$p'$};
		%\node at (-0.85,-1.7) {$\beta$};
		\filldraw[thick](-1.16,-1.3) circle(0.03);
		\node at (-1.3,-1) {$R_{\gamma_z,s}(A)$};
		\draw[->][thick](-1.3,-2.3)--(-0.8,-2.3);
		\draw[->][thick](-1.9,-2.3)--(-2.4,-2.3);
		\node at (-1.6,-2.3) {$s$};
		\node at (-5,1.1){$\mathbb{H}^{n+1}$};
		\node[blue] at (-3.5,-3){$\Omega$};
		
		\draw (-2.32,-1.25)--(-2.33,-1.36)--(-2.43,-1.35);
		
		\draw (-0.65,-1.8)--(-0.75,-1.8)--(-0.75,-1.9);

		\draw (-0.75,-1.32)--(-0.75,-1.42)--(-0.65,-1.42);

			\draw (-0.69,0.01)--(-0.69,-0.09)--(-0.59,-0.09);
	
	\end{tikzpicture}
	\caption{$R_{\gamma_z,s}(A)\in\Omega_z^-(s)$}\label{fig2}
\end{figure}
Furthermore, we can pick $P'$ such that 
\begin{equation}
	d(P,P')=|\overline{pp'}|\leq O(\sqrt{d_i}).
\end{equation}
To see this, by the law of sines and cosines in hyperbolic space, denote $\angle A_1A_2A_3$ as the angle between $\overline{A_1A_2}$ and $\overline{A_2A_3}$, we have
\begin{equation}\label{pythargorean}
	\left\{\begin{aligned}
		&\cosh|\overline{pp'}| \cosh|\overline{e'p'}|=\cosh(\rho_\infty+d_i),\\
		&\cosh|\overline{e'p'}| \cosh|\overline{ee'}|=\cosh(\rho_\infty-d_i) \cosh|\overline{pp'}|\left(=\cosh|\overline{ep'}|\right)\\
		&\frac{\sinh|\overline{e'p'}|}{\sinh(\rho_\infty+d_i)}=\sin\angle e'pp'=\cos\angle epe'=\frac{\cosh(\rho_\infty-d_i)\cosh(\rho_\infty+d_i)-\cosh|\overline{ee'}|}{\sinh(\rho_\infty-d_i)\sinh(\rho_\infty+d_i)}.
	\end{aligned}\right.
\end{equation}
Solving \eqref{pythargorean}, we get 
\begin{equation*}
	\sinh(\rho_{\infty}-d_i)\left(1+\sinh^2|\overline{e'p'}|\right)=\cosh(\rho_\infty-d_i)\cosh(\rho_\infty+d_i)\sinh|\overline{e'p'}|.
\end{equation*}
We see that there is a solution with $\sinh|\overline{e'p'}|=\sinh\rho_\infty+O(d_i)$ and hence $\cosh|\overline{pp'}|=1+O(d_i),\ |\overline{pp'}|=O(\sqrt{d_i})$.

Next, we claim that $S_{\gamma_z}^+(\Omega_{t_i})\leq |\overline{pp'}|$, which yields
 $\lim_{t\to\infty}S_{\gamma_z}^+(\Omega_{t})\leq 0$. To see this, we have to prove that for $s:=|\overline{pp'}|$, $R_{\gamma_{z},s}(\Omega^+_{t_i}(s))\subset\Omega^-_{t_i}(s)$.
 
 Note that $\Omega^+_{t_i}(s)$ is contained in the cylinder region whose boundary corresponds to the union of normal geodesic lines through $e'\in E'$ in direction perpendicular to $P'$. Hence $\forall A\in\Omega_{t_i,z}^+(s)$, the geodesic line which goes through $A$ and is perpendicular to $P'$, would intersect $\partial B_{\rho_{\infty}+d_i}(p)\cap H_{z,s}^+$, $P'$ and $\partial B_{\rho_{\infty}-d_i}(p)\cap H_{z,s}^-$ at some points $r$, $b$, $l$ respectively. Let $a$ be the closest point on $\overline{rl}$ to $p$, see figure \ref{fig2}. By the convexity of $\Omega_{t_i}$ and that $B_{\rho_{\infty}-d_i}(p)\subset\Omega_{t_i}\subset B_{\rho_{\infty}+d_i}(p)$, we have
 \begin{equation*}
 	\overline{lb}\subset \Omega_{t_i,z}^-(s)
 \end{equation*}
 and $|\overline{bA}|\leq |\overline{br}|$. For our purpose, if we can show
 \begin{equation*}
 	|\overline{lb}|>|\overline{br}|,
 \end{equation*}
then the reflection argument follows. 

Denote $\beta=|\overline{bp'}|$. Again by the law of sines and cosines in hyperbolic space, one can calculate
 \begin{equation*}
	\tanh|\overline{pp'}|=\tanh|\overline{ab}|\cosh\beta,
\end{equation*} 
hence $|\overline{ab}|$ is monotonously decreasing in $\beta$. It's obvious that $|\overline{ap}|$ is monotonously increasing in $\beta$, thus
\begin{equation*}
	|\overline{la}|-|\overline{ar}|=\text{arccosh}\left(\frac{\cosh(\rho_\infty-d_i)}{\cosh|\overline{ap}|}\right)-\text{arccosh}\left(\frac{\cosh(\rho_\infty+d_i)}{\cosh|\overline{ap}|}\right)
\end{equation*} 
is monotonously decreasing in $\beta$. Then
\begin{equation}
	|\overline{lb}|-|\overline{br}|=(|\overline{la}|-|\overline{ar}|)+2|\overline{ab}|\geq\left(|\overline{lb}|-|\overline{br}|\right)\bigg|_{\beta=|\overline{p'e'}|}=|\overline{ee'}|>0.
\end{equation}
 
The same argument can also show $\lim_{t\to\infty}S_{\gamma_z}^-(\Omega_{t})\geq 0$. Then
	\begin{equation*}
	0\leq \lim_{t\to\infty}S_{\gamma_z}^-(\Omega_{t_i})\leq\lim_{t\to\infty}S_{\gamma_z}^+(\Omega_{t_i})\leq 0,
	\end{equation*}
	which finishes our proof.
\end{proof}

\section{Proofs of Theorem \ref{theo} and Corollary \ref{coro}}\label{final}

In this section, we complete the proofs of Theorem \ref{theo} and Corollary \ref{coro}.
\subsection{Proof of Theorem \ref{theo}}
Firstly, we prove the following uniform estimate for the principal curvatures of $M_t$ along the flow \eqref{flow-VMCF}.
\begin{lem}\label{uni}
	Let $M_0$ be a smooth, closed and convex hypersurface in $\mathbb{H}^{n+1} (n\geq 2)$, and $M_t$ be the smooth solution of the flow \eqref{flow-VMCF} starting from $M_0$. Then there exists constants $\underline{\kappa}$, $\overline{\kappa}$ depending only on $n$, $\alpha$ and $M_0$ such that the principal curvatures $\kappa_i$ of $M_t$ satisfy:
	\begin{equation}
    \underline{\kappa}\leq \kappa_i\leq \overline{\kappa},\qquad i=1,\dots,n
	\end{equation}
for all time $t\in [0,+\infty)$.
\end{lem}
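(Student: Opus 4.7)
The plan is to use the convergence $p_t \to p$ established in Lemma \ref{innercon} to freeze a single base point for all sufficiently large times, thereby eliminating the ``restart every $\tau$'' mechanism in Propositions \ref{preserve convex} and \ref{propKupp} that is responsible for the exponential-in-time factor $\Lambda_1^{2t/\tau}$ and the $T$-dependence of $\beta_1(T)$.

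First I would observe that, since $p_t \to p$ and the inner radius of $\Omega_t$ is uniformly bounded below by $c_1 > 0$ by Lemma \ref{in-out}, there exists $T_0 > 0$ depending only on $n, \alpha, M_0$ such that $d_{\mathbb{H}^{n+1}}(p_t, p) < c_1/2$ for all $t \geq T_0$. The triangle inequality then gives
\begin{equation*}
B_{c_1/2}(p) \subset B_{c_1}(p_t) \subset \Omega_t, \qquad \forall\, t \geq T_0.
\end{equation*}
Choosing $p$ as the base point for both the Klein model projection and the support function $u = \sinh \rho_p \langle \partial_{\rho_p}, \nu\rangle$, the $C^0$ and $C^1$ estimates of Lemma \ref{t0-bound on u,du} will then hold uniformly on $\mathbb{S}^n \times [T_0, \infty)$ with constants depending only on $n, \alpha, M_0$.

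Next I would rerun the maximum principle argument of Proposition \ref{preserve convex} for $G = \log \mathfrak{r}_{11} + \frac{L}{2} r^2$ on $\mathbb{S}^n \times [T_0, \infty)$ with this fixed base point. The derivations of \eqref{s4.Q1'} and \eqref{s4.Q2'} carry over verbatim, and their constants $a_i, b_i$ now depend only on $n, \alpha, M_0$ rather than on $T$. Choosing $L = 2 b_4/b_5$ as before yields $\mathfrak{r}_{11}(\bar z, \bar t) \leq \Lambda$ at an interior spacetime maximum. Crucially, no restart is needed, so comparing with the initial value $\max_{\mathbb{S}^n} \mathfrak{r}_{11}(\cdot, T_0)$, which is finite by Proposition \ref{preserve convex} applied with $T = T_0$, produces a uniform upper bound on $\mathfrak{r}_{11}$ for all $t \geq T_0$. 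By \eqref{s5:W-inv-2} and the uniform $C^0, C^1$ estimates, this translates to a time-independent positive lower bound $\kappa_i \geq \underline{\kappa}$ on $M_t$ for $t \geq T_0$.

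Equipped with $\kappa_i \geq \underline{\kappa}$, I would then revisit Proposition \ref{propKupp}. The decisive improvement is that the estimate $\sigma_{n-1}(\kappa) \leq n K \underline{\kappa}^{-1}$ replaces the growing factor $\Lambda_3 \Lambda_1^{2T/\tau}$ in \eqref{s4.2-1}, so the coefficient $\beta_1$ becomes independent of $T$. The same ODE argument then yields a uniform upper bound $K \leq \overline{K}$ on $M_t$ for $t \geq T_0$, and individual upper bounds for the principal curvatures follow from
\begin{equation*}
\kappa_i = \frac{K}{\prod_{j \neq i}\kappa_j} \leq \frac{\overline{K}}{\underline{\kappa}^{\,n-1}}.
\end{equation*}
On the compact interval $[0, T_0]$, Propositions \ref{preserve convex} and \ref{propKupp} already supply curvature bounds depending only on $n, \alpha, M_0$ and $T_0$, so concatenating the two regimes will give the desired uniform two-sided bounds for all $t \in [0, \infty)$. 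The main obstacle is the degeneracy of the base point $p_t$ in the finite-time estimates; once Lemma \ref{innercon} allows us to replace $p_t$ by the fixed limit $p$, the remaining work is essentially checking that the constants in the previous maximum-principle arguments are now independent of $t$.
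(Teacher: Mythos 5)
Your proposal is correct and follows essentially the same route as the paper: use the convergence $p_t\to p$ from Lemma \ref{innercon} to fix the base point $B_{c_1/4}(p)\subset\Omega_t$ (you use $c_1/2$) for $t$ beyond some $t^*$, rerun the maximum-principle estimate of Proposition \ref{preserve convex} with this fixed origin so no restarting is needed, and then feed the resulting uniform lower curvature bound into the argument of Proposition \ref{propKupp} to make $\beta_1$ time-independent, handling $[0,t^*]$ by the finite-time bounds. The only differences are cosmetic (explicitly writing the $C^0$–$C^1$ uniformity and the final step $\kappa_i\leq \overline{K}/\underline{\kappa}^{\,n-1}$, which the paper leaves implicit).
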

\proof
By Lemma \ref{innercon}, the center $p_t$ of an inner ball of $\Omega_t$ converges to a fixed point $p$ as $t\to\infty$. Since the inner radius of $\Omega_t$ has a positive lower bound $\rho_-(t)>c_1$, there exists a sufficiently large time $t^{*}$, depending on $c_1$ and hence depending only on $n$, $\alpha$ and $M_0$, such that $d(p_t,p)<{c_1}/{4}$ for $t\geq t^{*}$. Then we have:
\begin{equation}\label{s7.0}
	B_{c_1/4}(p)\subset \Omega_t,\qquad \forall~t\geq t^{*}.
\end{equation}
Applying Proposition \ref{preserve convex} to the time interval $[0,t^*)$ and $[t^*,\infty)$ respectively gives a uniform lower bound for the principal curvatures of $M_t$ for all time $t>0$. In fact, on the time interval $[0,t^*)$, the estimate \eqref{s4.2-0} implies that the principal curvatures $\kappa_i$ of $M_t$ satisfy
\begin{equation}\label{s7.1}
  k_i\geq \Lambda_3^{-1}\Lambda_1^{-\frac{2t^*}{\tau}},\qquad t\in[0,t^*).
\end{equation}
While for time $t\in [t^*,\infty)$, since $B_{c_1/4}(p)\subset \Omega_t$ for all time $t\in [t^*,\infty)$,  without loss of generality, we can view the point $p$ as the origin and project $\Omega_t$ onto $B_1(0)\subset\mathbb{R}^{n+1}$ with respect to the point $p$ for all time $t\geq t^{*}$. By the estimate \eqref{tau1} in the proof of Proposition \ref{preserve convex}, we have
\begin{equation*}
  \mathfrak{r}_{11}(z,t)\leq \Lambda_1\max\left\{\Lambda,\max_{z\in \mathbb{S}^n}\mathfrak{r}_{11}(z,t^*)\right\}
\end{equation*}
for all $t\in [t^*,\infty)$. This together with \eqref{s7.1} and \eqref{s5:W-inv-2} implies that the principal curvatures of $M_t$ are uniformly bounded from below by a positive constant  $\underline{\kappa}$ which depends only on $n$, $\alpha$ and $M_0$.

Since we get the uniform lower bound for the principal curvatures, the uniform upper bound for the Gauss curvature $K$ follows easily from the proof of Proposition \ref{propKupp}. In fact, the upper bound \eqref{s4.2-1} for the terms $(I)$ in the proof of Proposition \ref{propKupp} now has a uniform coefficient $\beta_1$, which is independent of time $t$. Therefore there exists a constant $\overline{\kappa}$, such that $\kappa_i\leq \overline{\kappa}$ for all $i=1,\dots,n$. This completes the proof of Lemma \ref{uni}.
\endproof

It follows from Lemma \ref{uni} that the flow \eqref{flow-VMCF} is uniformly parabolic for all time $t>0$. Then an argument similar to that in the proof of Theorem \ref{long} can be applied to show that all derivatives of curvatures are uniformly bounded on $M_t$ for all $t>0$. This together with Lemma \ref{subcon} implies there exists a sequence of times $t_i\to\infty$, such that $M_{t_i}$ converges smoothly to a geodesic sphere $S_{\rho_{\infty}}(p)$ as $t_i\to\infty$.

The full convergence and the exponential convergence can be obtained by studying the linearization of the flow \eqref{flow-VMCF}. Fix a sufficiently large time $t_i$, we write $M_{t}$ for $t\geq t_i$ as the graph of the radial function $\rho(\cdot,t)$ over $\mathbb{S}^n$ centered at the fixed point $p\in \mathbb{H}^{n+1}$ and the flow equation \eqref{flow-VMCF} is equivalent to the following scalar parabolic PDE
\begin{equation}\label{eq-rho}
	\left\{\begin{aligned}
		\frac{\partial}{\partial t}\rho&=\left(\phi(t)-K^{\alpha}\right)\sqrt{1+{|\bar{\nabla}\rho|^2}/{\sinh^2\rho}},\quad t>t_i,\\
		\rho(\cdot&,t_i)=\rho_{t_i}(\cdot),
	\end{aligned}\right.
\end{equation}
for the radial function $\rho$ over the sphere $\mathbb{S}^n$, where $K$ is expressed as a function of $\rho, \bar{\nabla}\rho$ and $\bar{\nabla}^2\rho$ via the equation \eqref{eq-Gauss}. Since $M_{t_i}$ converges to a geodesic sphere $S_{\rho_\infty}(p)$ as $t_i\to \infty$, we can assume that the oscillation of $\rho(\cdot,{t_i})-\rho_{\infty}$ is sufficiently small by choosing $t_i$ large enough. By a direct computation, the linearized equation of the flow \eqref{eq-rho} about the geodesic sphere $S_{\rho_\infty}(p)$ is given by
\begin{equation}\label{eqdeta2}
	\frac{\partial }{\partial t}\eta=\frac{ \alpha\coth^{\alpha-1}{\rho_{\infty}}}{n\sinh^2\rho_{\infty}}\left(\bar{\Delta}\eta+n\eta-\frac{n}{|\mathbb{S}^n|}\int_{\mathbb{S}^n}{\eta\, d\sigma}\right).
\end{equation}

Since the oscillation of $\rho(\cdot,{t_i})-\rho_{\infty}$ is already sufficiently small, it follows exactly as in \cite{Cab-Miq2007}, using \cite{Esc98}, that the solution $\rho(\cdot,t)$ of \eqref{eq-rho} starting at $\rho(\cdot,{t_i})$ exists for all time and converges exponentially to a constant $\rho_{\infty}$. This means that the hypersurface $\overline{M}_t=$ graph $\rho(\cdot,t)$ solves \eqref{flow-VMCF} with initial condition $M_{t_i}$ and by uniqueness $\overline{M}_t$ coincides with $M_t$ for $t\geq t_i$, and hence the solution $M_t$ of \eqref{flow-VMCF} with initial condition $M_0$ converges exponentially as $t\to\infty$ to the geodesic sphere $S_{\rho_\infty}(p)$. This completes the proof of Theorem \ref{theo}.

\subsection{Proof of Corollary \ref{coro}}\label{sec6}
Finally, we give the proof of Corollary \ref{coro} using the monotonicity of $\mathcal{A}_{n-1}(\Omega_t)$ and the convergence result of the flow \eqref{flow-VMCF}. Firstly, if $\Omega$ is convex, we evolve the boundary $M=\partial\Omega$ by the flow $\eqref{flow-VMCF}$. Then the inequality \eqref{eqA-F} follows from Theorem \ref{theo} and the monotonicity in Lemma \ref{lemmono} immediately. If equality holds in \eqref{eqA-F} for such $\Omega$, then equality also holds in \eqref{eqWmo} for all time $t$ which means that $\Omega
$ and $\Omega_t$ are all geodesic balls.

In general, for weakly convex $\Omega$ we can approximate $\Omega$ by a family of convex domains $\Omega_\varepsilon$ as $\varepsilon\to 0$. In fact, if we project the domain $\Omega$ into $B_1(0)\subset \mathbb{R}^{n+1}$ as in $\S$\ref{sec2}, the equation \eqref{eq-hij} implies that the image $\hat{\Omega}$ of the projection is also weakly convex in  $\mathbb{R}^{n+1}$. Hence we can approximate $\hat{\Omega}$ by convex domains (e.g., by using mean curvature flow). Since the projection is a diffeomorphism, we find a family of convex domains $\Omega_{\varepsilon}$ that approximate $\Omega$ as $\varepsilon\to 0$. Then the inequality \eqref{eqA-F} for $\Omega$ follows from the one for $\Omega_\varepsilon$ by letting $\varepsilon\to 0$.

To prove the equality case for weakly convex $\Omega$, we employ an argument previously used in \cite{G-L2009}. Suppose that $\Omega$ is a weakly convex domain which attains the equality of \eqref{eqA-F}. Let $M_{+}=\{x\in M=\partial\Omega, K>0\}$. Since there exists at least one point $p$ on a closed hypersurface in $\mathbb{H}^{n+1}$ such that all the principal curvatures are strictly larger than 1 at $p$, the subset $M_{+}$ is open and nonempty. We claim that $M_{+}$ is closed as well. In fact, pick any $\eta\in C_0^2(M_{+})$ compactly supported in $M_{+}$, let $M_\varepsilon$ be a smooth family of variational hypersurfaces generated by the vector field $V=\eta\nu$. Let $\Omega_\varepsilon$ be the domain enclosed by $M_\varepsilon$. It is easy to show that $M_\varepsilon$ is weakly convex when $|\varepsilon|$ is small enough. Hence
\begin{equation*}
	\mathcal{A}_{n-1}(\Omega_\varepsilon)\geq \psi_n\left(|\Omega_\varepsilon|\right)
\end{equation*}
holds for sufficiently small $|\varepsilon|$ and with equality holding at $\varepsilon=0$. Thus
\begin{align*}%\label{eqvariation}
	0=&\frac{d}{d\varepsilon}\bigg|_{\varepsilon=0}\Big(\mathcal{A}_{n-1}(\Omega_\varepsilon)- \psi_n\left(|\Omega_\varepsilon|\right)\Big)=n\int_{M}{\left(K-\psi'_n(|\Omega|)\right)\eta}d\mu.
\end{align*}
Since $\eta\in C_0^2(M_{+})$ is arbitrary, we have $K=\psi'_n(|\Omega|)$ everywhere on $M_{+}$. As this is a closed condition, we conclude that $M_{+}$ is closed.  Therefore $M=M_{+}$ and so $\Omega$ is a convex domain. Then by the equality case for convex domain, we conclude that $\Omega$ is a geodesic ball.

%----------------------------------------------------------

\end{document}